\theoremstyle{plain}
\newtheorem{theorem}{Theorem}[section]
\newtheorem{lemma}[theorem]{Lemma}
\newtheorem{proposition}[theorem]{Proposition}
\newtheorem{corollary}[theorem]{Corollary}
\newtheorem{example}[theorem]{Example}
\theoremstyle{definition}
\newcommand{\eps}{\varepsilon}
\newcommand{\dist}{{\rm dist\,}}
\newcommand{\spn}{{\rm span\,}}
\newcommand{\BB}{\mathcal B}
\newcommand{\EE}{\mathcal E}
\newcommand{\R}{\mathbb R}
\newcommand{\NN}{\mathbb N}
\newcommand{\SSS}{\mathcal S}
\newif\ifComplain
\def\complain#1{\ifComplain\ifhmode \newline\fi{\sf *** \ \ #1
\\}\fi}
\newif\ifmarglab
\def\label#1{\@bsphack\ifmarglab\marginpar{LAB:#1}\fi\if@filesw {\let\thepage\relax
   \def\protect{\noexpand\noexpand\noexpand}%
   \edef\@tempa{\write\@auxout{\string
      \newlabel{#1}{{\@currentlabel}{\thepage}}}}%
   \expandafter}\@tempa
   \if@nobreak \ifvmode\nobreak\fi\fi\fi\@esphack}
\long\def\onefigure#1#2{
\begin{figure*}[tbp]
\begin{center}
#1
\end{center}
\caption{#2}
\end{figure*}
} 
\newcommand\newipefig[2]
\begin{document}

\title[Characterization]{When products of projections diverge}

\author[E. Kopeck\'a]{ Eva Kopeck\'a}
\address{Department of Mathematics\\
   University of Innsbruck\\
 A-6020 Innsbruck, Austria}

\email {eva.kopecka@uibk.ac.at}

\thanks{Research partially supported by  NSF  Grant No. 1440140 while  the author was visiting  MSRI, Berkeley, California in 2017}

\subjclass[2010]{Primary: 46C05, Secondary: 05C38}
\keywords{Hilbert space,  Johnson graph,  product, projection}


\begin{abstract}
Slow convergence of cyclic projections implies divergence of random projections
and vice versa.

Let $L_1,L_2,\dots,L_K$ be a family of $K$
closed   subspaces of a Hilbert space. It is well known that although the cyclic product of the orthogonal projections on these spaces always converges in norm, random products might diverge. Moreover, in the cyclic case there is a dichotomy: the convergence is fast if and only if 
$L_1^{\perp}+\dots+L_K^{\perp}$ is closed;
 otherwise the convergence is arbitrarily slow. 
 
 We prove a parallel to this result concerning random products: 
we characterize  those families $L_1,\dots,L_K$ for which   
all random products converge using their geometric and combinatorial structure.
\end{abstract}

\maketitle


\section*{Introduction}

Let $K$ be a fixed natural number and let $L_1,L_2,\dots,L_K$ be a family of $K$
closed   subspaces of a Hilbert space  $H$ such that, for simplicity, $\bigcap_{i=1}^K L_i=\{0\}$.
Let $z_0\in H$ and $k_1,k_2,\dots \in [K]=\{1,2,\dots,K\}$
be an arbitrary sequence in which each $k\in[K]$ appears infinitely often. Consider the sequence of vectors $\{z_n\}_{n=1}^{\infty}$ defined by
\begin{equation}\label{iter}
z_{n}=P(L_{k_n})z_{n-1},
\end{equation}
where $P(X)$ denotes the orthogonal projection of $H$ onto the subspace  $X$.
Then    $\{z_n\}$  is a  weakly-null sequence    according to \cite{AA}.  If $H$ is infinite dimensional and $K\geq 3$, then 
the sequence   $\{z_n\}$  does not, in general, converge in norm as exhibited in  \cite{KM, KP}.

If  the sequence
$\{k_n\}$ is periodic, the product of the projections is called cyclic. In the cyclic case  the sequence $\{z_n\}$  always does converge  in norm  \cite{N, Ha}.   
The convergence is, however,  either exponentially fast, or arbitrarily slow, depending on whether $L_1^{\perp}+\dots+L_K^{\perp}$ is closed or not. This dichotomy result was obtained in \cite{BDH, DH} and independently in   \cite{BaGM1, BaGM2}; see also \cite{BaS1, BaS2} for recent development. In this paper we show how slow convergence of cyclic iterates of projections corresponds to existence of non-converging iterates.

For the general, not necessarily cyclic case Bauschke showed in \cite{B}    that 
if $\sum_{j\in J}L_j^{\perp}$ is closed for each nonempty 
$J\subset [K]$, then $\{z_n\}$ converges in norm. In Example~\ref{slownono} we show that  a condition this strong is not necessary for the convergence.
 
Deutsch and Hundal ask in \cite{DH} if just
$L_1^{\perp}+\dots +L_K^{\perp}$
being closed 
is enough to guarantee the norm convergence of $\{z_n\}$ 
in the general case. In Theorem~\ref{3-eq-positive} we show that this is indeed the case for $K=3$ and $K=4$.
For $K\geq 5$, we exhibit in Example~\ref{5}
that this is {\em not} the case in general.
For $K\geq 4$, we give in Theorem~\ref{hypothesis} 
a condition which is equivalent to
 the norm convergence  of $\{z_n\}$ and which involves connected subgraphs of the Johnson graph $J(K,4)$. 

Here is an  overview of the various conditions and their relations; for simplicity we assume that $\bigcap L_k=\{0\}$.

\begin{enumerate}
\item[(i)] $L_1^{\perp}+L_2^{\perp}+\dots +L_K^{\perp}$ is   closed in $H$;
\item[(ii)]  if $T=P(L_K)\dots P(L_2)P(L_1)$, then $\|T^n\|\leq Cr^n$ for some $r\in [0,1)$;
\item[(iii)]   for all closed subspaces  $\tilde L_i\subset L_i\subset H$,  each starting point $z_0\in H$  and every sequence of indices $k_1, k_2,\dots \in \{1,\dots,K\}$, the sequence of iterates defined by $z_n= P(\tilde L_{k_n}) z_{n-1}$ does  converge in norm.
\end{enumerate} 
For $K=2$ condition (iii) is always satisfied \cite{N}.
If $K\geq 2$, then (i)$\Leftrightarrow$(ii)  by \cite{BDH, DH, BaGM1, BaGM2}.
In Corollary~\ref{inspaces} we  prove that 
(iii)$\Rightarrow$(i) holds for $K\geq 3$ subspaces.
In  Theorem~\ref{3-eq-positive} we show that 
(i)$\Leftrightarrow$(iii)   for $K=3$ and $K=4$ subspaces.

Here is the complementary version of the above, enriched by the Johnson graphs condition (d) and examples (e)   of subspaces that show that the condition (d) really  takes place.
\begin{enumerate}
\item[(a)] $L_1^{\perp}+L_2^{\perp}+\dots +L_K^{\perp}$ is   not closed in $H$;
\item[(b)] for every sequence $\{a_n\}\in c_0$ there exists 
a starting point $z\in H$ so that    $|T^nz-P(\bigcap L_k)z|\geq a_n$;
\item[(c)]   there exist  closed subspaces  $\tilde L_i\subset L_i\subset H$,   a  starting point $z_0\in H$  and  a  sequence of indices $k_1, k_2,\dots \in \{1,\dots,K\}$ so that   the sequence of iterates defined by $z_n= P(\tilde L_{k_n}) z_{n-1}$ does not converge in norm;
\item[(d)] there exists $V\subset \binom{[K]}{4}$ such that $\bigcup V=[K]$, for $E=\{\{A,B\}\in \binom V2:\, |A\cap B|=3\}$ the graph $G=(V,E)$ is connected, and if $A\in V$, then $\bigcap_{i\in A} L_{i}$ is infinite dimensional, or $\sum_{i\in A} L_{i}^{\perp}$ is not closed;
\item[(e)]  
 for every $\emptyset\neq V\subset  \binom{[K]}4$  so that $\bigcup V=[K]$, and for every   $\alpha:V\to \{0,1\}$  there  exist closed subspaces $L_1,\dots,L_K\subset H$ with the following properties.  If $A\in V$ and $\alpha(A)=0$, then $\bigcap_{i\in A}L_{i}$ is infinite dimensional. If $A\in V$ and $\alpha(A)=1$, then $\sum_{i\in A} L_{i}^{\perp}$ is not closed.
If $B\in \binom{[K]}4\setminus V$, then $\bigcap_{i\in B}L_{i}$ is finite dimensional  and   $\sum_{i\in B} L_{i}^{\perp}$ is   closed.
\end{enumerate}  
If $K\geq 2$, then   (a)$\Leftrightarrow$(b)  by \cite{BDH, DH, BaGM1, BaGM2}.
In Corollary~\ref{inspaces} we  prove that 
(a)$\Rightarrow$(c) holds for $K\geq 3$ spaces.
In  Theorem~\ref{3-eq-positive} we show that 
(a)$\Leftrightarrow$(c)   for $K=3$ and $K=4$ spaces if $\bigcap L_k=\{0\}$.
In Theorem~\ref{hypothesis} we show that  (c)$\Leftrightarrow$(d) for $K\geq 4$. If $H$ is infinite dimensional and $K\geq 4$ then according to Proposition~\ref{johnbio}, (e) is satisfied. That is, 
for each induced subgraph $G$  of the Johnson graph $J(K,4)$, there are $K$ closed subspaces of an infinite dimensional Hilbert space with the geometry of all their 4-tuples corresponding to $G$.

The paper is organized as follows. In Section~\ref{almortho} we recall the closedness of $L_1^{\perp}+L_2^{\perp}+\dots +L_K^{\perp}$ as a geometric tool suitable  for examining the convergence of products of projections.
In Section~\ref{SJ} we consider subspaces $L_1,\dots ,L_K$ for which there is a non-convergent sequence of products of projections.
We use Johnson graphs to describe the family of sets of indices $A\subset [K]$ for which $\sum_{i\in A} L_{i}^{\perp}$ is not closed or 
$\bigcap_{i\in A}L_{i}$ is infinite dimensional, and  answer  a question of Deutsch and Hundal \cite{DH}. Let $K\geq 3$ and let $L_1^{\perp}+ \dots +L_K^{\perp}$ be not closed.
In Section~\ref{K34} we construct $\tilde L_k\subset L_k$, $k\in [K]$,  and a product of projections on the spaces $\tilde L_k$ which does not converge in norm.  For $K=3$ and $K=4$, we show  in Theorem~\ref{3-eq} that the existence of such $\tilde L_k$'s is equivalent to $L_1^{\perp}+ \dots +L_K^{\perp}$  not being closed.
In Section~\ref{K5} we give for $K\geq 5$ a weaker condition involving Johnson graphs
under which a non-converging product of projections exists.
In Section~\ref{examples}
we give examples showing where the previously exhibited results cannot be strengthened or modified.
In Section~\ref{appen}  we collect  a  couple of elementary tools used throughout  the  text.

{\bf Notation.} In the entire paper  $H$ is a Hilbert space; we will explicitly mention when we need $H$ to be infinite dimensional. 
For a closed subspace $X$ of $H$ we denote by $P(X)$  the orthogonal projection onto $X$.  
 For $M,N\subset H$ we denote by $\bigvee M$ the closed linear span  of $M$ and by $M\vee N$   the closed linear span of $M\cup N$. Similarly, we use $\vee x$ and $x\vee y$ for $x,y\in H$. By $|x|$ we denote the norm of $x$.

Let  $\BB(H)$ be the space of bounded linear operators from $H$ to $H$.
For $m\in\NN$, let $\SSS_m$ be the free semigroup with generators $a_1,\dots,a_m$. 
If $\varphi=a_{i_r}\cdots a_{i_1}\in\SSS_m$ (for some $r\in\NN$ and $i_j\in\{1,\dots,m\}$)
 and $A_1,\dots,A_m\in \BB(H)$, then we write
$\varphi(A_1,\dots,A_m)=A_{i_r}\cdots A_{i_1}\in \BB(H)$. If $X_1,\dots,X_m$ are closed subspaces of $H$, then, by a slight abuse of notation,  $\varphi(X_1,\dots,X_m)=P(X_{i_r})\cdots P(X_{i_1})\in \BB(H)$
is the corresponding product of orthogonal projections.

Denote by $|\varphi|=r$ the ``length" of the word $\varphi$  and by $|\varphi_i|$ or   $|\varphi_{a_i}|$ the number of ``occurrences" of $a_i$ in the word $\varphi$. Then $\sum_{i=1}^m |\varphi_i|=|\varphi|=r$.
 
We denote $[K]=\{1,\dots,K\}$.  For a finite set $V$ and $k\in \NN$ we denote by $\binom{V}k$ the set of all $k$-element subsets of $V$. 

\section{Common orthogonal structure}\label{almortho}

Suppose $L_1,\dots,L_K$ are closed subspaces of a Hilbert space $H$.
The existence of an orthonormal sequence which is {\em almost} contained  in all of 
the spaces  $L_k$ is the reason behind both the slow convergence of the cyclic products $(P(L_K)\dots P(L_1))^n$ and the possible non-convergence of the random products of $P(L_k)$'s.

Here is a brief heuristic.
The larger the acute angle between two lines $L_1$ and $L_2$, the faster do the projections $(P(L_2)P(L_1))^nz$ converge to the intersection of the lines. This simple geometric observation 
was developed  in \cite{BDH, DH,  BaGM1, BaGM2} to the statement
that if ``the angles"  between the subspaces $L_1,\dots,L_K$ of a Hilbert space $H$ are positive, then 
$(P(L_K)\dots P(L_1))^n$ converges ``fast". An economical way of stating that the ``angles are positive"
is to say that $L_1^{\perp}+ \dots+L_K^{\perp}$ is closed.   
Bauschke and Borwein \cite {BB}  have given an equivalent geometric condition, $\neg$(ii) of the next lemma, that $L_1,\dots,L_K$ are boundedly regular. 
In infinite dimensions even spaces which intersect only at the origin,
can have ``zero angle" between them. Geometrically this means
that the spheres of these spaces {\em almost touch} (condition (ii)  of Lemma~\ref{BB}), or, more vividly, that  the subspaces $L_1,\dots,L_K$ {\em almost share} an orthonormal sequence  (condition (iv)  of Lemma~\ref{BB}). 

On the other hand,  the iterative construction of  \cite{P, KM, KP} 
uses an orthonormal sequence  $\{w_n\}$ to build three {\em almost touching}  
spaces $\tilde L_1, \tilde L_2, \tilde L_3$ for which certain random product of projections does not converge. 
If $\{w_n\}$ is very close to all of the spaces $L_1, L_2, L_3$, it is possible to build $\tilde L_1$ within $L_1$, and in general $\tilde L_k\subset L_k$.

\begin{lemma}\label{BB}
Let $L_1,\dots, L_K$, $K\in \NN$  be closed subspaces of a Hilbert space $H$  and let $L=\bigcap_{k\in [K]} L_k$.
  
The following statements are equivalent:
\begin{enumerate}
\item[(i)] $L_1^{\perp}+ \dots+L_K^{\perp}$ is {\em not} closed in $H$;
\item[(ii)] there exists a bounded
$A\subset H$  and    
$\eps>0$ such that for all $\delta>0$, there exists   
$w\in A$ 
so that $\max\{\dist(w,L_i):\, i\in  [K]\}< \delta$ and $\dist(w, L)>\eps$;
\item[(iii)] there exists a normalized weakly-null sequence $\{w_n\}$ in $L^{\perp}$   so that 
  $\lim_{n\to\infty}\max\{\dist(w_n,L_i):\, i\in  [K]\}=0$;
 \item[(iv)] there exists an orthonormal sequence $\{w_n\}$ in $L^{\perp}$   so \\ that 
  $\lim_{n\to\infty}\max\{\dist(w_n,L_i):\, i\in  [K]\}=0$. 
 If $V\subset H$ is finite dimensional,  then it is, moreover,  possible to choose  $\{w_n\}$ in $V^{\perp}$. 
\end{enumerate}
\end{lemma}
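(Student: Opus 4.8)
The plan is to treat (ii), (iii) and (iv) as three guises of a single quantitative statement and to isolate the functional-analytic core as the equivalence of that statement with (i). Throughout I would use that, since $L=\bigcap_k L_k\subseteq L_i$, one has $L_i^\perp\subseteq L^\perp$ for every $i$, whence $\sum_i L_i^\perp\subseteq L^\perp$ and $(\sum_i L_i^\perp)^\perp=\bigcap_i L_i=L$, so that $\overline{\sum_i L_i^\perp}=L^\perp$; thus (i) is equivalent to $\sum_i L_i^\perp$ being a \emph{proper} subspace of $L^\perp$. The unifying quantity is
\[
c_0=\inf\Big\{\max_{i\in[K]}\dist(x,L_i):\ x\in L^\perp,\ |x|=1\Big\}.
\]
Two implications are then essentially free. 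For (iv)$\Rightarrow$(iii) an orthonormal sequence is automatically normalized and weakly null by Bessel's inequality. For (iii)$\Rightarrow$(ii) I would take $A=\{w_n\}$ (bounded) and $\eps=\tfrac12$: since $w_n\in L^\perp$ we have $\dist(w_n,L)=|w_n|=1>\eps$, while $\max_i\dist(w_n,L_i)\to0$ supplies, for each $\delta$, a witness in $A$.

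For (ii)$\Rightarrow$(iii) I would first push the witnesses into $L^\perp$. If $v_n\in A$ satisfies $\max_i\dist(v_n,L_i)<1/n$ and $\dist(v_n,L)>\eps$, set $u_n=P(L^\perp)v_n$; since $P(L)v_n\in L\subseteq L_i$ one has $\dist(v_n,L_i)=\dist(u_n,L_i)$ and $|u_n|=\dist(v_n,L)>\eps$, so $w_n=u_n/|u_n|\in L^\perp$ satisfies $\max_i\dist(w_n,L_i)\le\eps^{-1}\max_i\dist(u_n,L_i)\to0$. To get weak nullity I would extract $w_{n_k}\rightharpoonup w$ (bounded sets are weakly sequentially compact); then $w\in L^\perp$, and from $|w_{n_k}-P(L_i)w_{n_k}|\to0$ and weak continuity of $P(L_i)$ one gets $P(L_i)w=w$, so $w\in\bigcap_iL_i=L$, forcing $w\in L\cap L^\perp=\{0\}$. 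For (iii)$\Rightarrow$(iv) I would almost-orthogonalize: since $\langle w_m,w_n\rangle\to0$ as $n\to\infty$ for fixed $m$, a greedy choice of indices produces a subsequence with summably small off-diagonal inner products, and Gram–Schmidt yields an orthonormal $\{e_k\}\subset L^\perp$ with $|e_k-w_{n_k}|\to0$, hence $\dist(e_k,L_i)\to0$. For the ``moreover'' with $V$ finite dimensional I would first replace $w_n$ by its projection onto $L^\perp\cap V^\perp=(L+V)^\perp$: writing $V'=P(L^\perp)V$ (finite dimensional and $\perp L$), one has $P(L+V)=P(L)+P(V')$, so for $w_n\in L^\perp$ the correction $P(V')w_n\to0$ in norm as $w_n\rightharpoonup0$; the perturbed sequence still satisfies (iii) and lies in $(L+V)^\perp$, where the orthonormalization keeps everything.

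The heart of the matter is (i)$\Leftrightarrow$(iii), which I would phrase as $c_0=0\Leftrightarrow\sum_iL_i^\perp$ not closed, noting that (iii) is exactly the statement $c_0=0$ (the weak-limit argument above removing any doubt about weak nullity). If $c_0>0$, the positive self-adjoint operator $T=\sum_iP(L_i^\perp)$ satisfies $\langle Tx,x\rangle=\sum_i\dist(x,L_i)^2\ge c_0^2|x|^2$ for $x\in L^\perp$, and since $T$ maps $L^\perp$ into $L^\perp$ it is bounded below there, hence invertible, so $T(L^\perp)=L^\perp$; as $Tx\in\sum_iL_i^\perp$ this forces $\sum_iL_i^\perp=L^\perp$, i.e.\ $\neg$(i). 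Conversely, if $\sum_iL_i^\perp=L^\perp$ is closed, the open mapping theorem applied to $(x_1,\dots,x_K)\mapsto\sum_ix_i$ on $\bigoplus_iL_i^\perp$ gives $C$ with decompositions $x=\sum_ix_i$, $x_i\in L_i^\perp$, $\sum_i|x_i|\le C|x|$; then $|x|^2=\sum_i\langle P(L_i^\perp)x,x_i\rangle\le C\,|x|\max_i\dist(x,L_i)$ yields $c_0\ge1/C>0$, i.e.\ $\neg$(iii).

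The main obstacle is exactly this last equivalence, where the two directions demand different tools — a closed-range/open-mapping argument to extract a uniformly bounded decomposition when the sum is closed, and spectral reasoning (bounded below $\Rightarrow$ invertible) for the self-adjoint $T$ in reverse — with the bookkeeping between $\max_i$ and $\sum_i$ of the squared distances being what pins the geometric constant $c_0$ to both. A secondary delicacy is the finite-dimensional correction in the ``moreover'' of (iv), where one must reach $L^\perp\cap V^\perp$ without disturbing the limits $\dist(\cdot,L_i)\to0$. I note finally that the equivalence (i)$\Leftrightarrow$(ii) is the Bauschke–Borwein bounded-regularity characterization referenced above and could alternatively be cited outright.
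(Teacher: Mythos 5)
Your proposal is correct, and in its central part it takes a genuinely different route from the paper. The paper does not prove (i)$\Leftrightarrow$(ii) at all: it cites Bauschke--Borwein \cite{BB} for that equivalence, then proves (ii)$\Rightarrow$(iii) by projecting witnesses into $L^{\perp}$, normalizing, and killing a nonzero weak limit by separating it from some $L_i$ with a weakly open halfspace, and finally gets (iii)$\Rightarrow$(iv) by invoking the appendix Lemma~\ref{justforme0} (with Lemma~\ref{justforme3}), whose inductive construction builds the orthonormal vectors inside $V^{\perp}$ from the start. You instead make the whole lemma self-contained: you encode (iii) in the constant $c_0=\inf\{\max_i\dist(x,L_i):x\in L^{\perp},\,|x|=1\}$ and prove $c_0=0\Leftrightarrow$(i) directly --- the operator $T=\sum_i P(L_i^{\perp})$, which vanishes on $L$ and is self-adjoint and bounded below by $c_0^2$ on the invariant subspace $L^{\perp}$, is invertible there when $c_0>0$, forcing $\sum_i L_i^{\perp}=L^{\perp}$; conversely, closedness of the sum plus the open mapping theorem yields uniformly bounded decompositions $x=\sum_i x_i$, $\sum_i|x_i|\le C|x|$, and the estimate $|x|^2=\sum_i\langle P(L_i^{\perp})x,x_i\rangle\le C|x|\max_i\dist(x,L_i)$ gives $c_0\ge 1/C$. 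This buys a quantitative, citation-free proof of the Bauschke--Borwein characterization, at the cost of length; the paper's outsourcing is shorter. Two further local differences both favor your version slightly: your weak-nullity argument (weak continuity of $P(L_i)$ puts any weak cluster point in $\bigcap_i L_i\cap L^{\perp}=\{0\}$) is cleaner than the paper's halfspace separation, and your treatment of the ``moreover'' clause --- replacing $w_n$ by $(I-P(L+V))w_n$ with $P(L+V)=P(L)+P(V')$ for $V'=P(L^{\perp})V$, the correction being norm-null since $V'$ is finite dimensional and $w_n$ weakly null --- makes explicit, and in fact repairs, a point the paper glosses over, namely that the vectors produced by Lemma~\ref{justforme0} must also land in $L^{\perp}$ (which is guaranteed only after working inside $L^{\perp}$, exactly as your reduction does). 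Your greedy-plus-Gram--Schmidt sketch for (iii)$\Rightarrow$(iv) compresses what the paper's appendix lemmas verify carefully, but the needed quantitative control (each new vector almost orthogonal to the span of the previously selected ones, which inductively form a well-conditioned system) is standard and your outline is sound.
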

\begin{proof}
The equivalence of (i) and (ii) appears in \cite{BB}. If the negation of the property (ii) takes place, then the $K$-tuple of the subspaces  $L_1,\dots, L_K$ is in \cite{BB} called  boundedly regular.

Clearly, (iv) implies (ii) and (iii). Now assume (ii) is satisfied.  To show (iii), 
for $\delta_n=1/n$, choose  
$w_n\in A$ so that $\max\{\dist(w_n,L_i):\, i\in \{1,\dots ,K\}\}< 1/n$ and $\dist(w_n, L)>\eps$.   Since $w_n-P(L)w_n$ also satisfies these inequalities, we can assume that $w_n\in L^{\perp}$. Since 
$A$ is bounded and  
the $L_i$'s are linear subspaces, by normalizing we can assume
that $|w_n|=1$ for all $n$ and  that $\{w_n\}$ converges weakly to some $w\in L^{\perp}$. 
Suppose $w\neq 0$. Then there is a weakly open halfspace $W\ni w$ so that, without loss of generality, $\bar W\cap L_1=\emptyset$. This means that $\lim_{n\to\infty} \dist(w_n,L_1)>0$, which is a contradiction. 

Lemma~\ref{justforme0} implies that (iv) follows from (iii).
\end{proof}

We will also use the followig simple corollary.

\begin{lemma}\label{seqeq}
Let $H$ be a Hilbert space and $L_1,\dots, L_K$, $K\in \NN$  its closed subspaces and let $L=\bigcap_{k\in [K]} L_k$.
The following statements are equivalent:
\begin{enumerate}
\item[(i)] $L$ is infinite dimensional, or $L_1^{\perp}+ \dots+L_K^{\perp}$ is {\em not} closed in $H$;
\item[(ii)] there exists a normalized weakly-null sequence $\{w_n\}$ in $H$   so that 
  $\lim_{n\to\infty}\max\{\dist(w_n,L_i):\, i\in  [K]\}=0$;
 \item[(iii)] there exists an orthonormal sequence $\{w_n\}$ in $H$   so \\ that 
  $\lim_{n\to\infty}\max\{\dist(w_n,L_i):\, i\in  [K]\}=0$. 
 If $V\subset H$ is finite dimensional,  then it is, moreover,  possible to choose  $\{w_n\}$ in $V^{\perp}$. 
\end{enumerate}
\end{lemma}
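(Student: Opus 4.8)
The plan is to run the cycle (i) $\Rightarrow$ (iii) $\Rightarrow$ (ii) $\Rightarrow$ (i), reducing each step to Lemma~\ref{BB}. The implication (iii) $\Rightarrow$ (ii) is immediate, since an orthonormal sequence is normalized and, by Bessel's inequality, weakly null.

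For (i) $\Rightarrow$ (iii) I would treat the two disjuncts of (i) separately. If $L_1^{\perp} + \dots + L_K^{\perp}$ is not closed, then part (iv) of Lemma~\ref{BB} already supplies an orthonormal sequence $\{w_n\}$ in $L^{\perp} \subseteq H$ with $\max_i \dist(w_n, L_i) \to 0$, together with the option of placing it in $V^{\perp}$ for any prescribed finite-dimensional $V$ --- which is exactly (iii). If instead $L$ is infinite dimensional, I would take an orthonormal sequence inside $L$ itself; since $L \subseteq L_i$ for every $i$, each distance $\dist(w_n, L_i)$ vanishes identically. To meet the $V^{\perp}$ clause, I note that for finite-dimensional $V$ the subspace $L \cap V^{\perp} = L \ominus P(L)V$ has finite codimension in $L$ and is therefore still infinite dimensional, so the orthonormal sequence may be chosen inside it.

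The substantive direction is (ii) $\Rightarrow$ (i). Starting from a normalized weakly-null sequence $\{w_n\}$ with $\max_i \dist(w_n, L_i) \to 0$, I may assume $L$ is finite dimensional, as otherwise (i) holds through its first disjunct; the goal is then to produce condition (iii) of Lemma~\ref{BB}. The idea is to peel off the $L$-component: set $v_n = w_n - P(L) w_n \in L^{\perp}$. Because $L$ is finite dimensional and $w_n \to 0$ weakly, one has $\langle w_n, e\rangle \to 0$ for each vector $e$ of an orthonormal basis of $L$, whence $P(L) w_n \to 0$ in norm; consequently $|v_n|^2 = 1 - |P(L) w_n|^2 \to 1$ and $v_n \to 0$ weakly. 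Crucially, since $P(L) w_n \in L \subseteq L_i$, subtracting it leaves the distance to $L_i$ unchanged, so $\dist(v_n, L_i) = \dist(w_n, L_i) \to 0$ uniformly in $i$. Normalizing the eventually nonzero vectors $v_n$ then yields a normalized weakly-null sequence in $L^{\perp}$ witnessing condition (iii) of Lemma~\ref{BB}, which is equivalent to $L_1^{\perp} + \dots + L_K^{\perp}$ not being closed --- the second disjunct of (i).

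I expect the only real subtlety to lie in the passage from $\{w_n\}$ to $\{v_n\}$ in this last step: weak-nullity, the approximation property, and the legitimacy of normalizing must all be checked, and each hinges on the single fact that $P(L) w_n \to 0$ in norm, which is precisely where finite dimensionality of $L$ enters. The remainder is the bookkeeping needed to invoke Lemma~\ref{BB}.
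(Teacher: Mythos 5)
Your proof is correct and follows essentially the same route as the paper's: the two disjuncts of (i) are handled respectively by an orthonormal sequence inside $L$ and by Lemma~\ref{BB}(iv), while (ii)$\Rightarrow$(i) uses the same decomposition $w_n=P(L)w_n+v_n$ with finite dimensionality of $L$ forcing the $L$-component to vanish in norm, followed by an appeal to the implication (iii)$\Rightarrow$(i) of Lemma~\ref{BB}. Your only departures are cosmetic refinements---you avoid the paper's subsequence extraction by testing weak nullity against a finite orthonormal basis of $L$ directly, and you explicitly check the $V^{\perp}$ clause when $L$ is infinite dimensional (via the finite codimension of $L\cap V^{\perp}$ in $L$), a point the paper leaves implicit.
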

\begin{proof}
Assume (i) is satisfied. If $L$ is infinite dimensional, then it contains an orthonormal sequence, hence (ii) and (iii) are satisfied.
If $L_1^{\perp}+ \dots+L_K^{\perp}$ is not closed, then (ii) and (iii) are satisfied according to Lemma~\ref{BB}. Clearly, (iii) implies (ii).

Assume that (ii) is satisfied and $L$ is finite dimensional. We write 
$w_n=x_n+y_n$, where $x_n\in L$ and $y_n\in L^{\perp}$. We may assume that $\{x_n\}$ converges in norm to $x\in L$, and $\{y_n\}$ converges weakly to $y\in L^{\perp}$. Then $\{w_n\}$ converges weakly to $0=x+y$; hence 
$x=y=0$ and $\lim_{n\to\infty}\max\{\dist(y_n/|y_n|,L_i):\, i\in  [K]\}=0$. By the implication (iii)$\Rightarrow$(i) of Lemma~\ref{BB}, $L_1^{\perp}+ \dots+L_K^{\perp}$ is not closed.
\end{proof}

Here is an explanation, where  normalized weakly-null sequences in $L^{\perp}$ naturally appear in connection to non-convergent products of projections.

Let $L_k$, $k\in [K]$, be as in the assumptions
of Lemma~\ref{seqeq}. We write each $L_k=L+X_k$, where $X_k$ is a closed subspace of $L^{\perp}$.
Then $P(L_k)=P(L)+P(X_k)$ and $\bigcap_{k\in [K]} X_k=\{0\}$.  Let $z_0=y+w_0$ for $y\in L$ and $w_0\in L^{\perp}$.
Assume that $\{z_n\}$ is defined as in (\ref{iter}).
Then $z_n=y+w_n$, where $w_n\in L^{\perp}$ is defined iteratively by 
$$
w_n=P(X_{k_n})w_{n-1}=P(L_{k_n})w_{n-1}.
$$
If $\{z_n\}$ does not converge in norm, then neither does $\{w_n\}$. By \cite{AA}, $\{w_n\}$  is weakly null. Since the sequence $\{|w_n|\}$ is decreasing,   there exists $\lim_{n\to \infty}|w_n|=c>0$.
By considering $z_0/c$ instead of $z_0$, we can assume that $\lim_{n\to \infty}|w_n|=1$.

\section{Divergent products and Johnson graphs}\label{SJ}

In this section we consider subspaces for which there is a divergent sequence of products of projections, and examine when  the sum of the orthogonal complements of the subspaces  is not closed.

Let $L_1,\dots, L_K$,   be closed subspaces of a Hilbert space $H$. Suppose there is a starting point $z_0\in H$  and a sequence of indices $k_1, k_2,\dots \in [K]$ containing each of the numbers $1,\dots, K$ infinitely  often so that the sequence of iterates defined by $z_n= P(L_{k_n}) z_{n-1}$ does not converge in norm.  By \cite{N}, $K\geq 3$.
If $K=3$ or $K=4$,  
Corollary~\ref{n34} shows that $L_1^{\perp}+\dots+L_K^{\perp}$ is not closed, answering thus positively a question of Deutsch and Hundal \cite{DH}. For $K\geq 5$, this is in general no longer true, but still
$\sum_{i\in A} L_{i}^{\perp}$ is not closed for a rich family of sets
$A\subset [K]$. To describe the structure of this  family, Johnson graphs turn out to be the right tool.

Let $K\geq 4$ be a natural number. 
The vertices of the Johnson graph $J(K,4)$ are the 4-element subsets of [K]; two 
vertices are adjacent when the intersection of the two vertices (subsets) contains 3 elements.

In Theorem~\ref{johnson} we show that there is a connected induced
subgraph $G=(V,E)$ of $J(K,4)$ so, that $\bigcup V=[K]$ and if $A\in V$, then $\sum_{i\in A} L_{i}^{\perp}$ is not closed or $\bigcap_{i\in A} L_{i}$ is infinite dimensional.
Conversely, in Proposition~\ref{johnbio} we construct for   each subgraph $G$ as above   a corresponding  configuration of subspaces of $H$.

  Next we prove   that a diverging sequence of iterates of projections needs at least three subspaces to ``move along".

Let $L_1,\dots, L_K$   be closed subspaces of a Hilbert space $H$, $z_0\in H$  and $z_n= P(L_{k_n}) z_{n-1}$ for a sequence of indices $k_1, k_2,\dots \in [K]$.  We denote 
\begin{equation}\notag
I(m,\delta)=\{k\in [K]:\, B(z_{m},\delta)\cap L_k\neq \emptyset\}.  
\end{equation}

\begin{lemma}\label{3step}
Let $L_1,\dots, L_K$  be closed subspaces of a Hilbert space $H$ so that $\bigcap L_k=\{0\}$. Suppose there is a starting point $z_0\in H$  and a sequence of indices $k_1, k_2,\dots \in [K]$ containing each of the numbers $1,\dots, K$ infinitely  often so that the sequence of iterates defined by $z_n= P(L_{k_n}) z_{n-1}$ does not converge in norm.  
Then for every $\delta>0$, there exists $M\in \NN$ so that 
$
|I(m,\delta)|\geq 3
$
for every $m\geq M$.
\end{lemma}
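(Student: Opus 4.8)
The plan is to combine a telescoping energy estimate with the convergence of products of two projections. First I would record the reductions prepared just before the statement: since $\bigcap_k L_k=\{0\}$ the sequence $\{z_n\}$ is weakly null by \cite{AA}, the norms $|z_n|$ decrease to some $c$, and $c>0$ (otherwise $z_n\to0$ in norm, against the hypothesis); deleting the ineffective steps with $k_n=k_{n-1}$ I may assume $k_n\neq k_{n-1}$. The crucial estimate is the orthogonality $z_{n-1}-z_n=(\Id-P(L_{k_n}))z_{n-1}\perp z_n$, which gives $|z_{n-1}|^2=|z_n|^2+|z_{n-1}-z_n|^2$; hence $\sum_n|z_{n-1}-z_n|^2=|z_0|^2-c^2<\infty$ and in particular $|z_n-z_{n-1}|\to0$. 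Since $z_n\in L_{k_n}$, for every fixed $r$ and $0\le i\le r$ one has $\dist(z_m,L_{k_{m-i}})\le|z_m-z_{m-i}|\le\sum_{j=0}^{i-1}|z_{m-j}-z_{m-j-1}|$, which is $<\delta$ once $m$ is large. Thus for large $m$ any bounded backward block of recent indices, in particular $k_m,k_{m-1},k_{m-2}$, lies in $I(m,\delta)$, and we are done as soon as three of them are distinct.

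The only way to fail is that the recent indices take just two values, i.e.\ a local alternation $a,b,a,\dots$. So I argue by contradiction: suppose that for some $\delta>0$ there are infinitely many $m$ with $|I(m,\delta)|\le2$. There are finitely many pairs, so after passing to a subsequence there is a fixed pair $\{a,b\}\subset[K]$ with $I(m,\delta)\subseteq\{a,b\}$ for all $m$ in an infinite set $S$; for $m\in S$ this means $\dist(z_m,L_c)\ge\delta$ for every $c\notin\{a,b\}$. Fixing any window radius $W$, if some index in positions $m-W,\dots,m+W$ were foreign, say equal to $c\notin\{a,b\}$, then the corresponding iterate lies in $L_c$ and $\dist(z_m,L_c)\le|z_m-z_{m\pm i}|<\delta$ for large $m$ (a sum of at most $2W$ consecutive differences), forcing $c\in I(m,\delta)$ — impossible. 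Hence for every $W$ all sufficiently large $m\in S$ sit at the centre of a block on which only $P(L_a)$ and $P(L_b)$ are applied, so there exist arbitrarily long \emph{pure} $\{a,b\}$-runs with an element of $S$ deep inside. Writing $M=L_a\cap L_b\subseteq L_a,L_b$, note that along such a run $P(M)z_n$ is constant (because $P(M)P(L_a)=P(M)P(L_b)=P(M)$) and $\dist(\cdot,M)=|(\Id-P(M))\cdot|$ is nonincreasing under $P(L_a)$ and $P(L_b)$.

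The contradiction comes from two clashing estimates for $d_m:=\dist(z_m,M)$, and the second is the step I expect to be the main obstacle. For the lower bound, take $m\in S$ large and let $q>m$ be the first foreign step, $c_0:=k_q\notin\{a,b\}$; the indices $k_m,\dots,k_{q-1}$ are in $\{a,b\}$, so $z_m$ and $z_{q-1}$ share the same $M$-component and $|z_m-z_{q-1}|\le d_m+d_{q-1}\le2d_m$ by monotonicity. Since $\dist(z_{q-1},L_{c_0})=|z_q-z_{q-1}|\to0$ while $\dist(z_m,L_{c_0})\ge\delta$, the triangle inequality yields $\delta\le|z_q-z_{q-1}|+2d_m$, hence $d_m\ge\delta/4$ for large $m\in S$. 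On the other hand, because such $m$ lies at depth $\ge W$ inside a pure $\{a,b\}$-run, the corresponding segment of $\{z_n\}$ is a long alternating product of $P(L_a)$ and $P(L_b)$, which by \cite{N} converges to $P(M)$; this should force $d_m\to0$ and contradict $d_m\ge\delta/4$. The delicate point is exactly that the angle between $L_a$ and $L_b$ may be zero, so $(P(L_a)P(L_b))^n$ need not converge uniformly and ``deep in the run'' does not immediately give $d_m=o(1)$. I expect to close this gap by a weak-compactness/limiting argument: pass to a subsequence of the run-starts (weakly null, with norms in $[c,|z_0|]$) and use the fixed-point/convergence structure of the two-projection product to show that the surviving $M^{\perp}$-mass must vanish along runs whose length tends to infinity, contradicting the uniform lower bound $d_m\ge\delta/4$. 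Once this contradiction is established, no $\delta$ admits infinitely many $m$ with $|I(m,\delta)|\le2$, which is precisely the assertion.
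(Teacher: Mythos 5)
Your reductions, the window argument producing arbitrarily long pure $\{a,b\}$-runs around points of $S$, and the lower bound $d_m\geq\delta/4$ (via the first foreign step $q$ and the identity $P(M)P(L_a)=P(M)P(L_b)=P(M)$) are all correct. But the step you flag is a genuine gap, and the repair you sketch cannot work. Von Neumann's theorem gives only strong (pointwise) convergence of the alternating product to $P(M)$; the convergence is uniform on the unit ball precisely when $L_a^{\perp}+L_b^{\perp}$ is closed, which is exactly the condition that may fail in the regime this paper studies. Indeed, by the Kayalar--Weinert formula $\|(P(L_a)P(L_b))^n-P(M)\|=c_F^{\,2n-1}$, where $c_F$ is the cosine of the Friedrichs angle, so when the angle is zero this norm equals $1$ for \emph{every} $n$: for each run length $W_j$ there exist unit vectors $u_j$, which can moreover be taken weakly null, with $\dist\bigl((P(L_a)P(L_b))^{W_j}u_j,M\bigr)$ bounded away from $0$ (this is the arbitrarily-slow-convergence phenomenon of \cite{BDH, DH}). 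Weak nullity of the run-starts and norms in $[c,|z_0|]$ are exactly the properties such counterexample vectors have, and the weak limit being $0$ yields nothing because the operators $(P(L_a)P(L_b))^{W_j}$ vary with $j$; so no subsequence/weak-compactness argument on the run-starts alone can force the $M^{\perp}$-mass to vanish. Closing the gap would require exploiting the global structure of the orbit, which your sketch does not do.

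What replaces ``convergence to $P(M)$'' in the paper is a displacement--energy inequality for two subspaces, from \cite{KR}: if the points $z_m,z_{m+1},\dots,z_n$ arise by projecting onto only \emph{two} of the spaces, then $|z_n-z_m|^2\leq|z_m|^2-|z_n|^2$. This bounds the total movement over a whole two-letter block by the energy drop, with no rate-of-convergence input whatsoever, and it makes the proof direct rather than by contradiction along a subsequence: normalize $\lim|z_n|=1$, note that non-convergence gives a positive oscillation $A=\lim_{N\to\infty}\sup\{|z_n-z_m|:\,N\leq m\leq n\}>0$, choose $M$ with $|z_i|^2<1+\delta^2/4$ for $i\geq M$ (so single steps are $\leq\delta/2$), and for each $m\geq M$ take the smallest $n>m$ with $|z_n-z_m|>\delta/2$. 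Then $z_m,\dots,z_n\in B(z_m,\delta)$, so every index used on this stretch lies in $I(m,\delta)$; if only two indices occurred there, the \cite{KR} inequality would force $|z_n-z_m|^2\leq|z_m|^2-|z_n|^2\leq\delta^2/4$, contradicting $|z_n-z_m|>\delta/2$. Note that you already derived the telescoping identity $\sum_n|z_{n-1}-z_n|^2=|z_0|^2-c^2$ but used it only one step at a time; the missing ingredient in your argument is precisely its two-subspace strengthening from single steps to displacements over entire blocks.
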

\begin{proof}
The orthogonal projection is linear, the sequence $\{|z_n|\}$ is decreasing, on the other hand the sequence 
$\{z_n\}$ does not converge in norm, so we can assume that 
$\lim |z_n|=1$. Since $\bigcap L_k=\{0\}$, according to  \cite{AA}, the sequence $z_n$ converges weakly to zero. 

Since $\{z_n\}$ does not converge in norm, 
$$
0<A:=\lim_{N\to\infty}\sup\{|z_n-z_m|:\, N\leq m\leq n\}.
$$
We can assume $0<\delta<A/2$ and choose $M$ so that 
$|z_i|^2<1+\delta^2/4$ for all $M\leq i$.
Then  
\begin{equation}\label{1step}
|z_i-z_{i+1}|^2=|z_i|^2-|z_{i+1}|^2\leq \delta^2/4.
\end{equation}
For a given $m>M$ choose the smallest $n>m$ so that 
$|z_n-z_m|>\delta/2$. Then $|z_n-z_m|<\delta$ by (\ref{1step}), as otherwise 
$$
|z_{n-1}-z_m|\geq |z_n-z_m|-|z_{n-1}-z_n|>\delta-\delta/2.
$$
Hence $z_m, z_{m+1},\dots,z_n\in B(z_m,\delta)$.
 If all of the points $z_m, z_{m+1},\dots,z_n$ consist of projections on only two of the subspaces $L_1,\dots, L_K$, then by {\em e.g.} \cite{KR}, we have 
\begin{equation}\notag
|z_n-z_m|^2\leq |z_n|^2-|z_{m}|^2\leq \delta^2/4,
\end{equation} 
 which is a contradiction.
\end{proof}

If $K\geq 4$ spaces are involved infinitely  often in a diverging sequence of iterates of projections, then  at least four of the spaces
have to be arbitrarily close together. 
In  Example~\ref{5} we show that for five  spaces this is no longer true.

\begin{proposition}\label{34}
Let $L_1,\dots, L_K$   be closed subspaces of a Hilbert space $H$, denote   $L=\bigcap L_k$. Suppose there is a starting point $z_0\in H$  and a sequence of indices $k_1, k_2,\dots \in [K]$ containing each of the numbers $1,\dots, K$ infinitely  often  so that the sequence of iterates defined by $z_n= P(L_{k_n}) z_{n-1}$ does not converge in norm.
Then there is $I\subset [K]$ with $|I|=\min\{4,K\}$ and a 
 normalized weakly-null sequence $w_n$ in $L^{\perp}$ so that 
$$
\lim_{n\to \infty}\max\{\dist(w_n,L_k):\, k\in I\}=0.
$$
\end{proposition}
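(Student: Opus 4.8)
The plan is to reduce to the hypotheses of Lemma~\ref{3step}, decompose the tail of the orbit into consecutive ``windows'' on each of which the displacement has size comparable to $\delta$, and then upgrade the bound ``at least three subspaces are active on each window'' coming from Lemma~\ref{3step} to ``at least four subspaces lie near a single iterate'' by comparing two \emph{consecutive} windows. A final pigeonhole over the finitely many $4$-subsets of $[K]$ pins down one fixed index set $I$ and extracts the desired sequence.

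First I would reduce to the case $L=\{0\}$, exactly as in the paragraph following Lemma~\ref{seqeq}: writing $L_k=L\oplus X_k$ with $X_k\subset L^{\perp}$ closed and $z_0=y+w_0$ with $y\in L$, the orbit splits as $z_n=y+w_n$, where $w_n=P(X_{k_n})w_{n-1}\in L^{\perp}$, the sequence $\{w_n\}$ is weakly null, does not converge in norm, $\bigcap_k X_k=\{0\}$, and we may assume $\lim_n|w_n|=1$. Since $w_n\in L^{\perp}$ we have $\dist(w_n,L_k)=\dist(w_n,X_k)$, so it suffices to find a fixed $I$ with $|I|=\min\{4,K\}$ and a subsequence of $\{w_n\}$ approaching the $X_k$, $k\in I$. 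Now fix a small $\delta>0$ and, as in the proof of Lemma~\ref{3step}, an $M\in\NN$ with $|w_i|^2<1+\delta^2/4$ for $i\ge M$. Starting from any $m\ge M$, let $n(m)>m$ be the first index with $|w_{n(m)}-w_m|>\delta/2$; since $\{w_n\}$ is not norm-Cauchy such an index exists once $\delta$ is small, and the single-step estimate gives $|w_{n(m)}-w_m|<\delta$, so $w_m,\dots,w_{n(m)}\in B(w_m,\delta)$. Iterating produces consecutive windows $[m_0,m_1],[m_1,m_2],\dots$ with $m_{i+1}=n(m_i)$. Setting $S_i=\{k_j:\ m_i<j\le m_{i+1}\}$, the two-subspace estimate used in Lemma~\ref{3step} forces $|S_i|\ge 3$. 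Finally, for $k\in S_i$ the iterate $w_j\in X_k$ lies in $B(w_{m_i},\delta)$, whence $\dist(w_{m_i},X_k)<\delta$ and, by the triangle inequality, $\dist(w_{m_{i+1}},X_k)<2\delta$; and for $k\in S_{i+1}$ one has $\dist(w_{m_{i+1}},X_k)<\delta$ directly.

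The crux is the passage from three to four, and here I would compare $S_i$ with $S_{i+1}$ at their common endpoint $w_{m_{i+1}}$. By the previous bounds, $\dist(w_{m_{i+1}},X_k)<2\delta$ for every $k\in S_i\cup S_{i+1}$. If $S_i\ne S_{i+1}$, then (both sets having at least three elements) $|S_i\cup S_{i+1}|\ge 4$, so at least four of the $X_k$ lie within $2\delta$ of $w_{m_{i+1}}$; and if some $|S_i|\ge 4$ we are already done on that window. The only remaining possibility is that the sets $S_i$ are eventually equal to one fixed set $S$ with $|S|=3$; but then only the indices in $S$ are used from some point on, so some element of $[K]$ is used only finitely often, contradicting the hypothesis that each appears infinitely often (here $K\ge 4>|S|$). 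This dichotomy is the main obstacle, and it is precisely what makes $4$ the correct number and requires the \emph{global} assumption that all $K$ subspaces recur. Consequently, for every small $\delta$ there are arbitrarily large indices $m$ with at least four of the $X_k$ within $2\delta$ of $w_m$.

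It remains to fix a single $I$. Choosing $\delta_j\downarrow 0$ yields indices $m_j\to\infty$ and $4$-sets $I_j\in\binom{[K]}{4}$ with $\max_{k\in I_j}\dist(w_{m_j},X_k)<2\delta_j$. Since $\binom{[K]}{4}$ is finite, some $I$ equals $I_j$ for infinitely many $j$; passing to that sub-subsequence gives $\max_{k\in I}\dist(w_{m_j},L_k)=\max_{k\in I}\dist(w_{m_j},X_k)\to 0$. As $w_{m_j}\in L^{\perp}$ is weakly null with $|w_{m_j}|\to 1$, the normalized vectors $w_{m_j}/|w_{m_j}|$ form the required normalized weakly-null sequence in $L^{\perp}$. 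The case $\min\{4,K\}=3$ (that is, $K=3$) needs no window comparison: Lemma~\ref{3step} already gives $|I(m,\delta)|\ge 3=K$ for large $m$, so $w_m$ is within $\delta$ of all three subspaces and the same pigeonhole with $\delta_j\downarrow 0$ (now trivial) finishes the argument.
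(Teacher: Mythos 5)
Your proposal is correct and follows essentially the same route as the paper's own proof of Proposition~\ref{34}: the same reduction to a normalized weakly-null orbit in $L^{\perp}$, the same use of Lemma~\ref{3step} (including its two-subspace estimate from \cite{KR}) to get three active subspaces, the same key observation that a permanently fixed $3$-set would trap the index sequence and contradict the hypothesis that all $K\geq 4$ indices recur, and the same final $\delta_j\downarrow 0$ diagonalization with a pigeonhole over $\binom{[K]}{4}$. The only difference is bookkeeping: you upgrade from three to four subspaces by comparing the used-index sets $S_i,S_{i+1}$ of consecutive displacement windows at their common endpoint, whereas the paper compares the nearness sets $I(m,\delta)\subset I(m,2\delta)$ across single steps via $B(z_m,\delta)\cup B(z_{m+1},\delta)\subset B(z_m,2\delta)$ --- two renderings of the same argument.
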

\begin{proof}
As explained at the end of Section~\ref{almortho}, we can assume that 
$\{z_n\}$ is a weakly-null sequence in $L^{\perp}$, and  
$\lim |z_n|=1$.  
Notice that according to von-Neumann's theorem, only $K\geq 3$  
is possible. 
If $K=3$, then  by Lemma~\ref{3step} it is enough to put   $w_n=z_n/|z_n|$.

Suppose $4\leq K$. 
Let $0<\delta$ and $N\in \NN$ be given.
We will show that  there is 
  $m(\delta,N)>N$ so that 
  $4\leq |I(m(\delta,N),2\delta)|$. Indeed, we can assume that for a given $\delta$ the given $N$ is at least as large as the $M$ of Lemma~\ref{3step} and that for all $i\geq N$, we have 
$$
|z_i-z_{i+1}| \leq \delta.
$$ 
For a contradiction assume that $|I(m,2\delta)|=3$ for all $m\geq N$.
Since for all $m\geq N$,
$$
B(z_{m},\delta) \cup B(z_{m+1},\delta)\subset B(z_{m},2\delta),
$$
$3=|I(m,\delta)|=|I(m+1,\delta)|\leq|I(m,\delta)\cup I(m+1,\delta)|\leq |I(m,2\delta)|=3$. Then, without loss of generality, 
$$
\{1,2,3\}=I(N,\delta)=I(N+1,\delta)=I(N+2,\delta)=\dots,
$$
which means that $4$ appears in the given sequence $ \{k_i\}$ only finitely many times. This contradicts our assumptions.

Consequently, we can choose an increasing sequence $m_n\geq n$ so that $4\leq |I(m_n,1/n)|$.
Since there are only finitely many 4-tuples of $K$ numbers, hence by dropping to a subsequence we can assume that for all $n\in \NN$,
$$
\{1,2,3,4\}\subset  I(m_n,1/n)
$$
and define $w_n=z_{m_n}/|z_{m_n}|$. 
\end{proof}

Deutsch and Hundal ask in \cite{DH} if  $L_1^{\perp}+\dots+L_K^{\perp}$ being closed 
is enough to guarantee the norm convergence of  any product of projections onto the $L_k$'s.  
The next corollary    shows that  for $K=3$ and $K=4$ this is indeed the case.
In Theorem~\ref{3-eq}   we modify  the two conditions in the  corollary so that they are actually equivalent.

 \begin{corollary}\label{n34}
 Let $H$ be a Hilbert space, $K=3$ or $K=4$,   
and let $L_1,\dots,L_K$ be closed subspaces of $H$.
Suppose 
\begin{enumerate}
\item[(i)]   there exist   a starting point $z_0\in H$  and a sequence of indices $k_1, k_2,\dots \in  [K]$ so that the sequence of iterates defined by $z_n= P(L_{k_n})z_{n-1}$ does not converge in norm.
\end{enumerate} \newpage
Then
\begin{enumerate}
\item[(ii)] $L_1^{\perp}+\dots+L_K^{\perp}$ is {\em not} closed in $H$.
\end{enumerate}
 \end{corollary}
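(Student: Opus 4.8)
The plan is to obtain the statement as an immediate consequence of Proposition~\ref{34} and the equivalence of Lemma~\ref{BB}; essentially all of the analytic work has already been done, and what remains is to check that the hypotheses line up and that the cardinality bounds are favorable for $K\le 4$.

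First I would record that we may assume each index $1,\dots,K$ occurs infinitely often in $\{k_n\}$, since this is exactly the hypothesis under which Proposition~\ref{34} is stated (it is also the standing convention of the Introduction). For $K=3$ this is in fact automatic: if some index recurred only finitely often, then the tail of $\{z_n\}$ would be a product of orthogonal projections onto only two of the subspaces, and such a product converges in norm by von Neumann's theorem \cite{N}, contradicting (i). For $K=4$ the assumption is genuinely needed and cannot simply be dropped, so in the plan I would flag it explicitly rather than try to derive it.

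Next, the point that makes $K\le 4$ special is the trivial identity $\min\{4,K\}=K$. With this, Proposition~\ref{34} produces a set $I\subset[K]$ with $|I|=\min\{4,K\}=K$, that is $I=[K]$, together with a normalized weakly-null sequence $\{w_n\}$ in $L^{\perp}$, $L=\bigcap_k L_k$, satisfying $\lim_{n\to\infty}\max\{\dist(w_n,L_k):k\in[K]\}=0$. This is precisely condition (iii) of Lemma~\ref{BB}, so the implication (iii)$\Rightarrow$(i) of that lemma yields that $L_1^{\perp}+\dots+L_K^{\perp}$ is not closed, which is conclusion (ii). In short: non-convergence feeds Proposition~\ref{34} an almost-common weakly-null sequence touching all $K$ spaces, and Lemma~\ref{BB} reads off non-closedness of the sum of orthocomplements.

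I do not expect a serious obstacle, precisely because the hard extraction argument (pulling an almost-common weakly-null direction out of a non-convergent orbit via the ``three-step'' estimates) is internal to Proposition~\ref{34}, and the geometric dictionary between such a sequence and the failure of closedness is Lemma~\ref{BB}. The single delicate point is the bound $\min\{4,K\}=K$: it is exactly what forces $I=[K]$, so that the extracted sequence is simultaneously close to \emph{all} $K$ subspaces and hence certifies non-closedness of the \emph{full} sum $\sum_{k\in[K]}L_k^{\perp}$ via Lemma~\ref{BB}. For $K\ge 5$ Proposition~\ref{34} would only give $|I|=4<K$, controlling $\dist(w_n,L_k)$ for four indices but leaving the remaining spaces uncontrolled; the sum of four orthocomplements being non-closed need not propagate to the sum of all $K$, which is why the argument stops working and the statement is restricted to $K=3,4$.
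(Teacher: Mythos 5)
Your proposal is correct and follows essentially the same route as the paper: the paper's proof likewise feeds the divergent orbit into Proposition~\ref{34} to extract a normalized weakly-null sequence $\{w_n\}$ in $L^{\perp}$ with $\lim_{n\to\infty}\max\{\dist(w_n,L_k):\,k\in[K]\}=0$ (using that $\min\{4,K\}=K$ for $K\le 4$) and then invokes the implication (iii)$\Rightarrow$(i) of Lemma~\ref{BB}. Your explicit handling of the ``each index appears infinitely often'' convention is a point the paper leaves implicit (it is the standing assumption from the Introduction), and your observations are accurate: for $K=3$ it is automatic by von Neumann's theorem, while for $K=4$ it must be assumed, since by Example~\ref{Bk}-type configurations non-closedness of a three-term sub-sum does not propagate to the full four-term sum.
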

 \begin{proof}
 Denote $L=\bigcap L_i$ and assume (i). By Proposition~\ref{34}, there is a 
 weakly-null sequence $w_n$ in $L^{\perp}$ so that 
$$
\lim_{n\to \infty}\max\{\dist(w_n,L_k):\, k\in I\}=0
$$
and (ii) follows from 
   Lemma~\ref{BB}.
 \end{proof}

The following theorem strengthens Proposition~\ref{34}; it describes  what exactly is happening when $K\geq 4$.
Notice, that the graph $G$ in (ii) is the induced subgraph with vertices $V$ of the Johnson graph $J(K,4)$.  In Proposition~\ref{johnbio} we will construct for   each subgraph $G$ as above   a corresponding  configuration of closed subspaces of $H$. 
In Theorem~\ref{hypothesis} we will show that the  statement
(i) and (ii) are actually equivalent.   

\begin{theorem}\label{johnson}
Let $H$ be a Hilbert space, $4\leq K$,   
and let $L_1,L_2,\dots, L_K$ be closed subspaces of $H$.

Suppose 
\begin{enumerate}
\item[(i)]
 there exist a starting point $z_0\in H$, a sequence of indices \\
 $k_1, k_2,\dots \in [K]$ containing each of the numbers $1,\dots, K$ infinitely often, and    for each $k\in [K]$, there exists   a closed subspace  $\tilde L_k\subset L_k\subset H$,   so that the sequence of iterates defined by $z_n= P(\tilde L_{k_n}) z_{n-1}$ does not converge in norm.
\end{enumerate}
Then    
\begin{enumerate}
\item[(ii)]
there exists $V\subset \binom{[K]}{4}$ so that 
$\bigcup V=[K]$, for $E=\{\{A,B\}\in \binom V2:\, |A\cap B|=3\}$ the graph $G=(V,E)$ is connected, and if $A\in V$ then $\bigcap_{i\in A} L_{i}$ is infinite dimensional or   $\sum_{i\in A} L_{i}^{\perp}$ is not closed.
\end{enumerate}
\end{theorem}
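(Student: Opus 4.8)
The plan is to reduce the hypothesis to a single normalized weakly-null trajectory and then extract the graph $G$ of conclusion (ii) from the way the ``active index set'' of this trajectory moves in time.

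First I would apply the hypothesis to the spaces $\tilde L_k$ and reduce exactly as at the end of Section~\ref{almortho}, now with $\tilde L=\bigcap_k\tilde L_k$ in the role of $L$: the reduced iterates $w_n\in\tilde L^{\perp}$, $w_n=P(\tilde L_{k_n})w_{n-1}$, still fail to converge, are weakly null by \cite{AA}, and may be normalized so that $|w_n|\to1$. For these $w_n$ and the spaces $\tilde L_k$ I use the sets $I(m,\delta)=\{k:\,B(w_m,\delta)\cap\tilde L_k\ne\emptyset\}$. Three facts drive everything: (a) $k_m\in I(m,\delta)$ for every $\delta$, since $w_m\in\tilde L_{k_m}$; (b) $|I(m,\delta)|\ge3$ for all large $m$, by Lemma~\ref{3step} applied to the $\tilde L_k$; and (c) slow movement, namely $I(m,\delta)\cup I(m+1,\delta)\subseteq I(m,2\delta)$ for large $m$, since $|w_m-w_{m+1}|\to0$. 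Because $\tilde L_k\subseteq L_k$ we have $\dist(w_m,L_k)\le\dist(w_m,\tilde L_k)$, so closeness to $\tilde L_k$ transfers to $L_k$.

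Next I would define the candidate vertex set. For $\delta>0$ put $V_\delta=\{A\in\binom{[K]}4:\ A\subseteq I(m,\delta)\text{ for infinitely many }m\}$. Since $V_\delta$ decreases as $\delta\downarrow0$ inside the finite set $\binom{[K]}4$, it stabilizes: there is $\delta_0>0$ with $V_\delta=V:=\bigcap_{\delta>0}V_\delta$ for all $\delta\le\delta_0$. I fix $\delta\le\delta_0/2$, so that also $V_{2\delta}=V$. For $A\in V$, diagonalization gives $m_j\to\infty$ and $\delta_j\to0$ with $A\subseteq I(m_j,\delta_j)$; then $w_{m_j}/|w_{m_j}|$ is a normalized weakly-null sequence with $\dist(w_{m_j},L_i)\le\delta_j\to0$ for each $i\in A$, and Lemma~\ref{seqeq} applied to the four spaces $\{L_i:i\in A\}$ yields that $\bigcap_{i\in A}L_i$ is infinite dimensional or $\sum_{i\in A}L_i^{\perp}$ is not closed. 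Thus every vertex of $V$ satisfies the required dichotomy. To prove covering I would first show that for each $k$ there are infinitely many $m$ with $k\in I(m,\delta)$ and $|I(m,\delta)|\ge4$: working at scale $\delta/2$ with $A(m)=I(m,\delta/2)$ (so $A(m)\cup A(m+1)\subseteq I(m,\delta)$), either infinitely many $m$ have $A(m)\ne A(m+1)$ and $k\in A(m)\cup A(m+1)$, whence $|I(m,\delta)|\ge4$ and $k\in I(m,\delta)$; or $A(m)$ is eventually a constant set $S_0\ni k$, which must then contain every index (each occurs as some $k_m$), forcing $S_0=[K]$ and again $|I(m,\delta)|\ge K\ge4$.

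The combinatorial heart I would organize around the coarse boxes $B_m:=I(m,2\delta)$, which by (c) satisfy $A_m:=I(m,\delta)\subseteq B_m$ and $A_m\cup A_{m+1}\subseteq B_m$, so consecutive boxes overlap in $A_{m+1}$, a set of at least three elements. Calling a $4$-set transient if it lies in $B_m$ for only finitely many $m$, there are finitely many such sets; letting $N^*$ exceed the last occurrence of each, every $4$-subset of every $B_m$ with $m\ge N^*$ is persistent, i.e. $\binom{B_m}4\subseteq V_{2\delta}=V$, while conversely each $A\in V$ lies in some such $B_m$. Covering is then immediate, and for connectivity I would argue that for $m\ge N^*$ with $|B_m|\ge4$ the family $\binom{B_m}4\subseteq V$ spans a connected Johnson subgraph $J(|B_m|,4)$ inside $G$, and that consecutive boxes are joined: sharing the $\ge3$-element set $A_{m+1}$, I pick $4$-sets $P\subseteq B_m$ and $Q\subseteq B_{m+1}$ with $P,Q\supseteq A_{m+1}$, so $|P\cap Q|\ge3$ and $PQ\in E$. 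When $|B_m|=3$ one has $A_m=A_{m+1}=B_m$, so a run of size-$3$ boxes is a constant ``resting'' triple that the flanking size-$\ge4$ boxes both contain, and the same shared-triple argument bridges across it; since size-$\ge4$ boxes recur for infinitely many $m\ge N^*$ (Proposition~\ref{34}), all cliques $\binom{B_m}4$ are mutually connected, and their union is exactly $V$. \textbf{The main obstacle is precisely this connectivity step}: the active set moves only continuously rather than by genuine single swaps, which forces one both to separate persistent from transient $4$-sets (the $N^*$ device) and to control the instants where the active set collapses to three spaces (the resting-triple analysis); once these are handled, $\bigcup V=[K]$ and the connectedness of $G$ follow together.
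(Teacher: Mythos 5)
Your proof is correct, but it organizes the combinatorics genuinely differently from the paper, and the comparison is instructive. The analytic backbone is identical: both arguments reduce modulo $\bigcap_k\tilde L_k$ to a normalized weakly-null trajectory, both run on the same three facts about $I(m,\delta)$ (membership of the current index, $|I(m,\delta)|\geq 3$ via Lemma~\ref{3step}, and slow movement $I(m,\delta)\cup I(m+1,\delta)\subset I(m,2\delta)$), and both finish each vertex with Lemma~\ref{seqeq} after transferring distances from $\tilde L_i$ to $L_i$. The divergence is in how the graph is extracted. The paper works at a sequence of scales $\delta=1/i$ over time-windows $[M_i,N_i]$ chosen so that every index occurs in the window, takes as vertices \emph{all} subsets of the boxes of size at least $3$, pigeonholes to make the graph the same for all scales along a subsequence, and only then prunes the $3$-sets and the sets of size at least $5$ down to a $4$-uniform graph; covering $\bigcup V=[K]$ comes for free from the window condition. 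You instead fix a single stabilized scale ($V_\delta=V$ for $\delta\leq\delta_0$), work $4$-uniformly from the start, replace the multi-scale pigeonhole by the persistent/transient split with the cutoff $N^*$, get connectivity from Johnson cliques $\binom{B_m}{4}$ bridged through the shared set $A_{m+1}$ and through resting triples, and must therefore prove covering by hand — your swap-or-constant dichotomy, which is in effect a re-derivation of the counting step inside the paper's Proposition~\ref{34} (where $|I(m,2\delta)|\geq 4$ is forced because a permanently constant triple would exclude some index). Your single-scale stabilization buys a cleaner statement of which $4$-sets are vertices ($V$ is canonical, not subsequence-dependent), at the price of the extra covering argument and the explicit resting-triple analysis that the paper avoids by keeping $3$-sets as temporary vertices. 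One cosmetic repair: in the bridging step you ``pick $4$-sets $P\subseteq B_m$, $Q\subseteq B_{m+1}$ with $P,Q\supseteq A_{m+1}$,'' which is impossible when $|A_{m+1}|\geq 5$ (and forces $P=Q$ when $|A_{m+1}|=4$); in those cases simply take $P=Q$ to be any $4$-subset of $A_{m+1}$, which lies in both $\binom{B_m}{4}$ and $\binom{B_{m+1}}{4}$, so the cliques share a vertex and the argument goes through unchanged.
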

\begin{proof}
Denote this time $L=\bigcap_{k\in [K]} \tilde L_{k}$.
As explained at the end of Section~\ref{almortho}, we may assume that 
$\{z_n\}$ is a weakly-null sequence in $L^{\perp}$, and  that 
$\lim |z_n|=1$.  
Define $N_{0}=1$.
For $\delta=1/i$ choose $M_i>N_{i-1}$ as in Lemma~\ref{3step} applied to $\tilde L_k$'s  and so  that $|z_m-z_{m+1}|<1/i$ for $M_i\leq m$. 
Let $N_{i}>M_i$ be the smallest number for which 
\begin{equation}\label{sm}
\{k_n\}_{n=M_i}^{N_i}=[K].
\end{equation}
Let 
\begin{equation}\notag
\begin{split}
V_i^m&=\{ A\subset I(m,2/i):\,  3\leq |A|\}, \\
V_i&=\bigcup_{m=M_i}^{N_i} V_i^m, \\
E_i&=\{\{A,B\}\in \binom {V_i}2:\,  A\neq B \mbox{ and } 3\leq |A\cap B|\}.
\end{split}
\end{equation}
Then by   (\ref{sm}),
\begin{equation}\label{union}
\bigcup V_i=[K].
\end{equation}
Consider the graph $G_i=(V_i,E_i)$. 
Clearly, each of its induced subgraphs $G^m_i=(V^m_i,E_i\cap \binom{V_i^m}2)$ is connected. 
   Since $3\leq |I(m, 1/i)|$ for all $m\geq M_i$ and  
   $I(m+1,1/i)\subset I(m,2/i)$, the 
graph $G_i=(V_i,E_i)$ is connected as well. Indeed, if $A\in V_i^m$ and $B\in V_i^{m+1}$ are given,  choose 
$D\in V_i^{m+1}$, $D\subset I(m+1, 1/i)$. Since all $G^m_i$'s are connected,  there is a path from $A$ 
to $D$ and from $B$ to $D$. 

Since $K$ is fixed, there are only finitely many such graphs $G_i$. By passing to a subsequence we can assume 
they are all the same $G=G_1=G_2=\dots$. Since $K\geq 4$, the graph $G$ has at least 2 vertices.
  Suppose   $\{A,B\}$ is an edge of $G$ so that  $|A|\leq |B|$. If $|A|=3$, then 
$A\subset B$ and $|B|\geq 4$.   Hence we can drop all vertices of size 3 from the graph and all edges containing them and the graph still fulfills  (\ref{union}) and stays connected.
Since with each set $A$ of size at least 5, $V$ contains all of its 4-element subsets, by keeping in $V$ only the sets of size 4  we preserve 
(\ref{union}). We claim that such a graph is still connected.
Suppose $P$ is a path from $C$ to $D$ in $G$ and $A$ is a vertex of $P$ with $|A|\geq 5$. Then $A$ can be replaced by at most three of its $4$-point subsets to create a walk from $C$ to $D$.
Hence we can drop from $V$ all  the vertices of size at least 5 and still preserve both connectivity and (\ref{union}), as claimed.

If $A\in V$, then 
$$
\lim_{n\to\infty}\max\{\dist(z_{m_n}, L_i):\, i\in A\}\leq \lim_{n\to\infty}\max\{\dist(z_{m_n},\tilde L_i):\, i\in A\}=0
$$
for a subsequence $\{z_{m_n}\}$ of the weakly-null sequence $\{z_{m}\}$. Hence  by
 Lemma~\ref{seqeq}, 
$\bigcap_{i\in A} L_{i}$ is infinite dimensional  or
$\sum_{i\in A} L_{i}^{\perp}$ is not closed.  
\end{proof}

We close  with ideas that will enable us in Section~\ref{K5} to prove that (i) and (ii)  in the above theorem are, in fact, equivalent.

\begin{lemma}\label{walk}
Let $G=(V,E)$ be the Johnson graph $J(K,4)$  and let 
$A_1,A_2,\dots\in V$, $\EE_i=\{A_i,A_{i+1}\}\in E$ be  a walk in $G$.
For a given $k\in [K]$ define $a_i=1$ if $k\in  A_i\cap A_{i+1}$ and 
$a_i=0$ otherwise.
Then the sequence $\{a_i\}$ is either constant, or it consists of blocks of ones separated by blocs of zeros   of    length at least two. 
\end{lemma}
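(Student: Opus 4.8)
The plan is to reduce the statement about the ``edge-membership'' sequence $\{a_i\}$ to an elementary fact about a ``vertex-membership'' sequence. I fix $k\in[K]$ and define $b_i=1$ if $k\in A_i$ and $b_i=0$ otherwise. Since $a_i=1$ exactly when $k\in A_i\cap A_{i+1}$, that is, when $k\in A_i$ and $k\in A_{i+1}$, one has $a_i=1$ if and only if $b_i=b_{i+1}=1$; equivalently $a_i=b_ib_{i+1}$ (reading $b_i,b_{i+1}\in\{0,1\}$). I would point out that this reformulation uses only the definition of $a_i$ through the intersection $A_i\cap A_{i+1}$ and does not even invoke the Johnson adjacency $|A_i\cap A_{i+1}|=3$; the adjacency merely guarantees that $A_i\cap A_{i+1}$ is the genuine common part of two successive vertices.

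The crux is to rule out an isolated zero flanked by ones, i.e.\ a pattern $a_t=1$, $a_{t+1}=0$, $a_{t+2}=1$. Indeed, $a_t=1$ forces $b_{t+1}=1$, and $a_{t+2}=1$ forces $b_{t+2}=1$; but then $a_{t+1}=b_{t+1}b_{t+2}=1$, contradicting $a_{t+1}=0$. Hence whenever a maximal block of zeros in $\{a_i\}$ is both preceded and followed by a one, its length cannot be $1$, so it is at least $2$.

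It remains to assemble this into the claimed dichotomy. If $\{a_i\}$ takes only one value it is constant, and we are done. Otherwise it contains both zeros and ones, and I decompose it into maximal constant blocks. Every block of zeros that separates two blocks of ones is, by the previous step, an internal zero-block flanked by ones, and therefore has length at least $2$; this is exactly the asserted structure. (A leading or trailing block of zeros is not a separating block, so no length bound is claimed for it, consistently with, e.g., $b=(0,1,1,\dots)$, which yields $a=(0,1,\dots)$ with a leading single zero.)

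I expect no serious obstacle here: the argument is entirely local in $i$, so it applies verbatim whether the walk is finite or infinite, and the only real content is the reduction $a_i=b_ib_{i+1}$. The one point deserving care is the reading of the statement---``of length at least two'' constrains the zero-blocks that \emph{separate} one-blocks---and the local implication $a_t=a_{t+2}=1\Rightarrow a_{t+1}=1$ is precisely what enforces this.
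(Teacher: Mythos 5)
Your proof is correct and is essentially the paper's own argument: the paper proves the contrapositive of your key step, showing that $a_i=1$ and $a_{i+1}=0$ force $k\in A_{i+1}$, hence $k\notin A_{i+2}$ and $a_{i+2}=0$, which is exactly your exclusion of the local pattern $1,0,1$. Your reformulation via $b_i$ with $a_i=b_ib_{i+1}$, and your observation that the Johnson adjacency $|A_i\cap A_{i+1}|=3$ is never actually used, are accurate but cosmetic repackagings of the same membership argument.
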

\begin{proof}
We can assume that $k=1$ and that $a_i=1$ and $a_{i+1}=0$. 
Then $1\in A_{i+1}$.
Hence $1\notin A_{i+2}$, and $a_{i+2}=0$.
\end{proof}

Let $K\in \NN$ be given. Let $G=(V,E)$ be the Johnson graph $J(K,4)$, and let 
$A_1,A_2,\dots\in V$, $\EE_i=\{A_i,A_{i+1}\}\in E$ be  an infinite walk in $G$. With a slight abuse of notation we also denote 
by $\EE_i$ the intersection $A_i\cap A_{i+1}$.
We will   iteratively define $K$   sequences $\{s^k_i\}_{i\in \NN}$, $k\in [K]$,   consisting of the symbols $X,Y,Z,0$, so that the $i$-th entry of exactly one of the $K$ sequences is $X$, of exactly one is $Y$, of exactly one is $Z$, and the $i$-th entries of all the other sequences are $0$. We proceed according to the following rules.

To start with, we set $s^k_i=0$ for all $k\in [K]$ and $i\in \NN_{0}$. 
Assume $\EE_1=\{a,b,c\}$. 
In the first step we re-define   $s^a_1=X$,   $s^b_1=Y$,   $s^c_1=Z$.
Here the order does not matter; we simply choose one,  assuming,  for example, that  $a<b<c$.

In the $i$-th step we change  the $i$-th coordinate from zero to  $X$, $Y$, or $Z$, respectively, in   three of the sequences corresponding to $\EE_i$. We proceed as follows. Since $3\leq |\EE_{i-1}\cup\EE_{i}|\leq |A_i|=4$, we can
assume $\EE_{i-1}=\{a,b,c\}$  and $\EE_{i}=\{a,b,d\}$, with possibly $c=d$. We define $s^a_i=s^a_{i-1}$,   $s^b_i=s^b_{i-1}$,   $s^d_i=s^c_{i-1}$.
 Each of the $K$  resulting sequences starts with $0$ followed by, 
according to Lemma~\ref{walk},   blocks of $X$'s, blocks of $Y$'s and blocks of $Z$'s (some or all of which can be missing) separated by blocks of zeros of length at least two.
 For example: $0,X,X,X,0,0,Y,0,0,Y,Y,0,0,0,Z,Z,Z,\dots$.  
 
 Moreover, for each $i\in \NN$, there is exactly one $k_{i,X}\in [K]$ so  that $s_i^{k_{i,X}}=X$. Similarly, there is exactly one $k_{i,Y}\in [K]$ and exactly one $k_{i,Z}\in [K]$ so  that $s_i^{k_{i,Y}}=Y$ and $s_i^{k_{i,Z}}=Z$. Clearly, $\eps_i=\{k_{i,X},k_{i,Y},k_{i,Z}\}$.

\section{$K=3$ or $K=4$ spaces $L_1,\dots, L_K$}\label{K34}

Suppose  $K\geq 3$ and let $L_1^{\perp}+ \dots +L_K^{\perp}$ be not closed. The key technical result of this section is Proposition~\ref{squeorthoblocs}. With its help we will
  construct $\tilde L_k\subset L_k$, $k\in [K]$, and a product of projections on the spaces $\tilde L_k$ which does not converge in norm.  For $K=3$  and $K=4$, we will show  in Theorem~\ref{3-eq} that the existence of such $\tilde L_k$'s is equivalent to $L_1^{\perp}+ \dots +L_K^{\perp}$  not being closed.

The following result on the continuous dependence of words on letters  will enable us to build
the spaces $X$ and $Y$ of Lemma~\ref{orthobuildingbloc} using any normalized weakly-null sequence instead of an orthonormal basis.

\begin{lemma}\label{wordcont}\cite{KP} 
Let $\psi\in \SSS_n$ for some $n\in \NN$. Assume  $A_i,B_i,E\in  \BB(H)$, $i\in \{1,2,\dots,n\}$, are  contractions so that each $A_i$ commutes with $E$.   Then
$$
\|\psi(A_1,\dots, A_n)E -\psi(B_1,\dots, B_n)E\| \leq \sum_{1\leq i\leq n} |\psi_i| \ \|A_i E - B_i E\|.
$$
\end{lemma}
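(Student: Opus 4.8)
The plan is to reduce the whole estimate to single-letter replacements via a telescoping sum, and to exploit the commutation of the $A_i$ with $E$ at exactly the right moment, so that $E$ ends up adjacent to the one factor that has been changed.

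Write $\psi=a_{i_r}\cdots a_{i_1}$ with $r=|\psi|$, so that $\psi(A_1,\dots,A_n)=A_{i_r}\cdots A_{i_1}$ and likewise for $B$. For $k=0,1,\dots,r$ I would introduce the hybrid products
$$
C_k=B_{i_r}\cdots B_{i_{k+1}}\,A_{i_k}\cdots A_{i_1},
$$
with empty products read as the identity, so that $C_0=\psi(B_1,\dots,B_n)$ and $C_r=\psi(A_1,\dots,A_n)$. The telescoping identity $\psi(A_1,\dots,A_n)-\psi(B_1,\dots,B_n)=\sum_{k=1}^r(C_k-C_{k-1})$ then reduces everything to the single-factor differences
$$
C_k-C_{k-1}=B_{i_r}\cdots B_{i_{k+1}}\,(A_{i_k}-B_{i_k})\,A_{i_{k-1}}\cdots A_{i_1}.
$$
The crucial point is the deliberate choice to keep all factors to the \emph{right} of the changed slot equal to the $A$'s; this is precisely what makes the commutation hypothesis usable.

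Multiplying on the right by $E$ and using that each $A_i$, and hence the product $A_{i_{k-1}}\cdots A_{i_1}$, commutes with $E$, I would slide $E$ leftward through that block to obtain
$$
(C_k-C_{k-1})E=B_{i_r}\cdots B_{i_{k+1}}\,(A_{i_k}-B_{i_k})\,E\,A_{i_{k-1}}\cdots A_{i_1}.
$$
Now the difference $(A_{i_k}-B_{i_k})E=A_{i_k}E-B_{i_k}E$ appears as a single factor. Since the $A_i$ and $B_i$ are contractions, the flanking products $B_{i_r}\cdots B_{i_{k+1}}$ and $A_{i_{k-1}}\cdots A_{i_1}$ have norm at most $1$, so submultiplicativity of the operator norm gives $\|(C_k-C_{k-1})E\|\le\|A_{i_k}E-B_{i_k}E\|$.

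Summing over $k$ and applying the triangle inequality yields $\|\psi(A_1,\dots,A_n)E-\psi(B_1,\dots,B_n)E\|\le\sum_{k=1}^r\|A_{i_k}E-B_{i_k}E\|$. Finally I would regroup this position-indexed sum by the generator occupying each position: each $a_i$ occupies exactly $|\psi_i|$ of the $r$ positions, so the sum becomes $\sum_{i=1}^n|\psi_i|\,\|A_iE-B_iE\|$, the claimed bound. The only genuinely delicate point is the direction of the telescoping: had I left $B$'s rather than $A$'s to the right of the modified factor, the commutation hypothesis would be useless, since the $B_i$ need not commute with $E$. Everything else is bookkeeping.
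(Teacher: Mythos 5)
Your proof is correct and complete: the telescoping decomposition with $A$'s kept to the right of the modified slot, the commutation of $A_{i_{k-1}}\cdots A_{i_1}$ with $E$, and the final regrouping of the $r$ position-indexed terms into $\sum_i |\psi_i|\,\|A_iE-B_iE\|$ all check out, and you correctly identified the one delicate point (the direction of the telescoping). The paper itself gives no proof, citing \cite{KP}, and your argument is exactly the standard one that the cited proof uses, so nothing further is needed.
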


The following lemma expresses the essence of the construction of \cite{P}.

\begin{lemma}\label{orthobuildingbloc}
For every  $\eps>0$, there exists $N=N(\eps)  \in \NN$, 
so that for every $\eta>0$, there exists   $\psi\in \SSS_3$ such that $|\psi_1|\leq N\leq |\psi|$  
with the following property.

Given  subspaces 
 $X\subset E$ of a Hilbert space $H$
so that $X$ is separable, $\dim X\geq N$, and $\dim(X^{\perp}\cap E)\geq \dim X$  and given $u,v \in X$  so that $\|u\|=\|v\|=1$ and $u\perp v$,
there exists a subspace $Y\subset E$ such that
\begin{enumerate}
\item[(i)]  $X\cap Y=\{0\}$;
\item[(ii)]  $\|P(X)-P(Y)\|<\eta$;
\item[(iii)] $|\psi(W, X,Y)u-v|<\eps$, where $W=u\vee v$.
\end{enumerate}
If, moreover,    $\tilde X,\tilde Y, \tilde Z$ are subspaces of $H$ such that
\begin{enumerate}
\item[(iv)]   $\|P(W)-P(\tilde Z)P(E)\|\leq\eps/N$;
\item[(v)] $\|P(X)- P(\tilde X)P(E)\|\leq\eps /|\psi|$;
\item[(vi)]  $\|P(Y)- P(\tilde Y)P(E)\|\leq\eps /|\psi|$,
\end{enumerate}
  then
$$
\bigl\|\psi(\tilde Z, \tilde X ,\tilde Y )u-v\bigr\|\leq4\eps.
$$
\end{lemma}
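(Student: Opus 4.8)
The plan is to separate the statement into its constructive core---producing $N$, the word $\psi$, and the subspace $Y$ with (i)--(iii)---and the perturbation estimate of the ``moreover'' clause, which will then follow formally from Lemma~\ref{wordcont}. I would first fix the geometry of $Y$. Using $\dim(X^{\perp}\cap E)\geq\dim X$, pick an isometry $S\colon X\to X^{\perp}\cap E$ and, for a small tilt angle $\beta$ to be chosen from $\eta$, set $Y=\{(\cos\beta)x+(\sin\beta)Sx:\ x\in X\}\subset E$. Item (i) is then immediate: if $(\cos\beta)x+(\sin\beta)Sx\in X$ then $(\sin\beta)Sx\in X\cap X^{\perp}=\{0\}$, so $x=0$. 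Item (ii) holds because $Y$ is the graph of a $\beta$-small tilt of $X$ into $X^{\perp}$, whence $\|P(X)-P(Y)\|\leq\sin\beta$, and it suffices to take $\beta$ with $\sin\beta<\eta$. The genuine content is (iii).

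For (iii) I would follow the construction of \cite{P}. The mechanism is that interleaving projections onto the almost-equal pair $X,Y$ with the projection onto the target plane $W=u\vee v$ transports the unit vector $u$ across $X$ to approximately $v$: each alternation of $P(X)$ and $P(Y)$ tilts the running vector a little out of $X$ into a fresh direction of $X^{\perp}\cap E$ and pulls it back slightly rotated, while $P(W)$ is applied once per stage to re-anchor the vector in the $u$--$v$ plane and shed the part that has escaped into $X^{\perp}$. Arranging $N$ such stages, each consuming a new tilt direction, carries $u$ stepwise to $v$; this is exactly where $\dim X\geq N$ and $\dim(X^{\perp}\cap E)\geq\dim X$ are used, and it pins $|\psi_1|$ (the number of occurrences of $W$) to $N$ while leaving the total length $|\psi|$ free to grow. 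I would choose $N=N(\eps)$ large enough, depending only on $\eps$, that $N$ stages reach $v$ with accumulated rotational defect below $\eps$; then, given $\eta$, I would fix $\beta$ small, so that $\sin\beta<\eta$ (giving (ii)) and so that each stage's run of alternating $P(X),P(Y)$---whose length grows like $1/\beta$ as $\beta\to0$, forcing $|\psi|$ up---contributes negligible shrinkage $(\cos\beta)^{O(1/\beta)}=1-O(\beta)$. Tracking $u$ through the resulting word and summing the per-stage errors yields $|\psi(W,X,Y)u-v|<\eps$. The quantifier order $\eps\to N\to\eta\to\psi$ is respected because $N$ is fixed by the number of stages alone, and $\beta$ enters only afterwards.

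For the ``moreover'' clause I would invoke Lemma~\ref{wordcont} with $n=3$, taking the contractions $A_1,A_2,A_3=P(W),P(X),P(Y)$, the contractions $B_1,B_2,B_3=P(\tilde Z),P(\tilde X),P(\tilde Y)$, and the commuting contraction $P(E)$ in the role of $E$; each $A_i$ commutes with $P(E)$ since $W,X,Y\subset E$. Because $P(W)P(E)=P(W)$ and likewise for $X,Y$, hypotheses (iv)--(vi) read $\|A_1P(E)-B_1P(E)\|\leq\eps/N$, $\|A_2P(E)-B_2P(E)\|\leq\eps/|\psi|$ and $\|A_3P(E)-B_3P(E)\|\leq\eps/|\psi|$. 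Lemma~\ref{wordcont} then bounds $\|\psi(W,X,Y)P(E)-\psi(\tilde Z,\tilde X,\tilde Y)P(E)\|$ by $|\psi_1|\,\eps/N+|\psi_2|\,\eps/|\psi|+|\psi_3|\,\eps/|\psi|\leq\eps+\eps+\eps=3\eps$, using $|\psi_1|\leq N$ and $|\psi_2|,|\psi_3|\leq|\psi|$. Since $u\in X\subset E$ gives $P(E)u=u$, the triangle inequality together with (iii) yields $\|\psi(\tilde Z,\tilde X,\tilde Y)u-v\|\leq\|\psi(\tilde Z,\tilde X,\tilde Y)P(E)u-\psi(W,X,Y)P(E)u\|+|\psi(W,X,Y)u-v|\leq3\eps+\eps=4\eps$.

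The main obstacle is the geometric heart of (iii): proving rigorously that the alternating projections onto the nearby pair $X,Y$, punctuated by the projections onto $W$, genuinely move $u$ toward $v$ with loss tending to $0$, and organizing the estimates so that $N$ can be fixed from $\eps$ alone, before $\eta$ (hence $\beta$) is revealed. Everything else---the description of $Y$, items (i) and (ii), and the perturbation estimate---is routine once this transport mechanism of \cite{P} is available.
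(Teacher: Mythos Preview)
Your proposal is correct and matches the paper's approach: the paper likewise defers (i)--(iii) to the construction in \cite{KP} (equivalently \cite{KM} or \cite{P}), and derives the ``moreover'' estimate exactly as you do, applying Lemma~\ref{wordcont} with $A_i=P(W),P(X),P(Y)$, $B_i=P(\tilde Z),P(\tilde X),P(\tilde Y)$, and the commuting contraction $P(E)$ to get the $3\eps$ bound, then concluding $\le 4\eps$ via the triangle inequality with (iii). Your explicit tilt description of $Y$ is additional heuristic detail the paper omits, but the underlying strategy is identical.
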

\begin{proof}
The statements (i)-(iii) are  basically Lemma~2.3 of \cite{KP}. To  see that one can work with finite dimensional spaces and the dependence of the dimension $N$ on $\eps$, one just needs to read carefully the proofs of 
Lemma~2.1, Lemma~2.2, and Lemma~2.3 of \cite{KP}, or, alternatively,  Lemma~2.1, Lemma~2.4, and Lemma~2.5 of \cite{KM}.
To show we have a slight freedom of  choice concerning  the 3 spaces onto which we project, we use 
 Lemma~\ref{wordcont}:
\begin{equation}\notag
\begin{split}
\bigl\|&\psi(\tilde Z, \tilde X , \tilde Y )P(E)-\psi(W, X,Y)P(E)\bigr\|  \leq N\cdot \|P(W)-P(\tilde Z)P(E)\| \\
&+|\psi|\cdot\|P(X)-P(\tilde  X)P(E)\|+|\psi|\cdot\|P(Y)-P(\tilde Y)P(E)\| \\
& \leq 3\eps;
\end{split}
\end{equation}
hence
$$
|\psi(\tilde Z, \tilde X , \tilde Y )u-v|  \leq   |\psi(\tilde Z, \tilde X , \tilde Y )u- \psi(W, X,Y)u|   +|\psi(W, X,Y)u-v|
 \leq 4\eps.
$$
\end{proof}

A careful reading of the proof of Lemma~2.5 of \cite{KP} or of Theorem~2.6 of \cite{KM} implies the following result.

\begin{lemma}\label{orthoblocs}
For every sequence of positive numbers   $\eps_i>0$, $i\in \NN$, there exist  $N_i\in \NN$  and $\Psi^i\in \SSS_3$
  with the following property.

Let $\{e_i\}_{i=1}^{\infty}$ be an orthonormal sequence in a Hilbert space $H$ and $W_i=e_i\vee e_{i+1}$.  Let   $\{E_i\}_{i=1}^n$ be a sequence of   subspaces of $H$ so that 
\begin{enumerate}
\item[(i)]  $\dim E_i\geq 2N_i$;
\item[(ii)] $e_i, e_{i+1}\in E_i$;
\item[(iii)] $\vee e_{i+1}=P(E_i)E_{i+1}=P(E_{i+1})E_i$;
\item[(iv)] $E_i\perp E_j$ if $|i-j|\geq 2$.
\end{enumerate}
Then there exist  sequences  $X_i,Y_i$   of   subspaces of $H$ so that 
\begin{enumerate}
\item[(v)]   $X_i,Y_i\subset E_i$;
\item[(vi)]  $e_i, e_{i+1}\in X_i$;
\item[(vii)]   $|\Psi^i(Z, X,Y)e_i-e_{i+1}|<\eps_i$;
\end{enumerate}
where $Y_0=\vee e_1$  and
$
X=\bigvee_{i=1}^{\infty}  X_i,\  Y=\bigvee_{k=0}^{\infty} Y_{2k+1},\ Z=\bigvee_{k=0}^{\infty} Y_{2k}
$.
\end{lemma}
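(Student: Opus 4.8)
The plan is to realize the infinite ``staircase'' $e_1\mapsto e_2\mapsto e_3\mapsto\cdots$ as a concatenation of the single-step moves furnished by Lemma~\ref{orthobuildingbloc}, one move inside each block $E_i$, while arranging that only three global subspaces $X,Y,Z$ are ever used. For each $i$ I would first feed $\eps_i/4$ into Lemma~\ref{orthobuildingbloc} to obtain the dimension $N_i=N(\eps_i/4)$ and, after fixing an auxiliary tolerance $\eta_i$ (say $\eta_i=\eps_i$), the word $\Psi^i\in\SSS_3$ with $|\Psi^i_1|\le N_i\le|\Psi^i|$. The hypotheses $\dim E_i\ge 2N_i$ and $e_i,e_{i+1}\in E_i$ then let me pick, inside each block, a separable $X_i\subset E_i$ with $e_i,e_{i+1}\in X_i$, $\dim X_i\ge N_i$ and $\dim(X_i^{\perp}\cap E_i)\ge\dim X_i$; applying Lemma~\ref{orthobuildingbloc} with $E=E_i$, $X=X_i$, $u=e_i$, $v=e_{i+1}$ and $W=W_i=e_i\vee e_{i+1}$ produces $Y_i\subset E_i$ with $|\Psi^i(W_i,X_i,Y_i)e_i-e_{i+1}|<\eps_i/4$. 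This already gives the local conclusions (v) and (vi); the entire difficulty is to pass from the \emph{local} word $\Psi^i(W_i,X_i,Y_i)$ to the \emph{global} word $\Psi^i(Z,X,Y)$.

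The assembly uses exactly the even/odd split dictated by the statement: $X=\bigvee_i X_i$, $Y=\bigvee_{k\ge0}Y_{2k+1}$ and $Z=\bigvee_{k\ge0}Y_{2k}$ with $Y_0=\vee e_1$. The reason for separating the $Y_i$ by parity is that the moving plane $W_i=e_i\vee e_{i+1}$ needed at step $i$ is not a ``native'' subspace of block $E_i$ but must be recovered from the neighbouring blocks $E_{i-1},E_{i+1}$. Indeed, by the orthogonality hypothesis every $Y_j$ with $|j-i|\ge2$ is annihilated by $P(E_i)$, while by the overlap hypothesis $P(E_i)E_{i+1}=\vee e_{i+1}$ and $P(E_i)E_{i-1}=\vee e_i$; if the blocks are built so that $e_j,e_{j+1}\in Y_j$, then $P(E_i)Y_{i-1}=\vee e_i$ and $P(E_i)Y_{i+1}=\vee e_{i+1}$, whence the opposite-parity union restricts in block $i$ to exactly $W_i$, while the same-parity piece restricts to $Y_i$ (the neighbour contributions $\vee e_i,\vee e_{i+1}$ being absorbed into $Y_i\ni e_i,e_{i+1}$). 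Concretely, for odd $i$ the three global spaces restrict to $(Z,X,Y)|_{E_i}\approx(W_i,X_i,Y_i)$ and I take $\Psi^i$ to be the building-block word; for even $i$ the roles of $Y$ and $Z$ interchange ($Y|_{E_i}\approx W_i$, $Z|_{E_i}\approx Y_i$), and I take $\Psi^i$ to be the building-block word with its first and third generators transposed, so that the ``few-occurrences'' generator always carries the moving plane.

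With the spaces in place I would verify the three restriction estimates
\[
\|P(W_i)-P(Z)P(E_i)\|\le\eps_i/N_i,\quad \|P(X_i)-P(X)P(E_i)\|\le\eps_i/|\Psi^i|,\quad \|P(Y_i)-P(Y)P(E_i)\|\le\eps_i/|\Psi^i|
\]
for odd $i$, and the analogue with $Y,Z$ interchanged for even $i$. These are precisely hypotheses (iv)--(vi) of Lemma~\ref{orthobuildingbloc}. Feeding them into the robustness part of that lemma, whose bound comes from the symmetric estimate of Lemma~\ref{wordcont} and is therefore insensitive to which generator plays the moving plane, yields $|\Psi^i(Z,X,Y)e_i-e_{i+1}|\le 4\cdot(\eps_i/4)=\eps_i$, which is conclusion (vii).

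The main obstacle is exactly the verification of these operator-norm restriction estimates: controlling $P(Z)P(E_i)$, $P(X)P(E_i)$, $P(Y)P(E_i)$ amounts to showing that the infinitely many blocks interact only through immediate neighbours and only along the single shared directions $e_i,e_{i+1}$. This is where the overlap and orthogonality hypotheses of the present lemma do all the work, and where the even/odd decomposition is indispensable, since it guarantees that within each block the ``moving plane'' role and the ``auxiliary $Y$'' role are never demanded of the same global space. The only input needed beyond the \emph{stated} conclusion of Lemma~\ref{orthobuildingbloc} is that the blocks can be built with $e_i,e_{i+1}\in Y_i$; I would extract this from the construction of \cite{KP} (or arrange it by a harmless enlargement of each $Y_i$), after which the estimates above hold with room to spare.
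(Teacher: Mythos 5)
Your overall architecture --- blockwise use of Lemma~\ref{orthobuildingbloc} with $u=e_i$, $v=e_{i+1}$, the parity split of the $Y_i$'s, restriction estimates feeding the robustness clause (iv)--(vi), and transposing the word on even blocks --- is exactly the mechanism the paper relies on (the paper refers Lemma~\ref{orthoblocs} to the constructions of \cite{KP, KM}, and its skewed version, Proposition~\ref{squeorthoblocs}, is proved in precisely this way). But the step on which everything hinges, the recovery of $W_i$ from the neighbouring blocks, is carried in your write-up by a false auxiliary claim: you require $e_j,e_{j+1}\in Y_j$ for every $j$. First, Lemma~\ref{orthobuildingbloc} cannot supply this, since its conclusion (i) gives $X_j\cap Y_j=\{0\}$ while $e_j,e_{j+1}\in X_j$. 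Second, the proposed ``harmless enlargement'' (replacing $Y_j$ by $Y_j\vee W_j$) is not harmless: whenever $e_j\notin Y_j$, the normalized residual $u=(e_j-P(Y_j)e_j)/|e_j-P(Y_j)e_j|$ lies in $E_j$ and satisfies $P(Y_j\vee e_j)u=u$, $P(Y_j)u=0$, so the enlargement changes the projection by operator norm $1$ no matter how close $e_j$ is to $Y_j$, destroying hypothesis (vi) of the robustness clause. Third, and decisively, the requirement is inconsistent with conclusion (vii): since $e_j\in Y_{j-1}$ and $e_j\in Y_j$ (one index even, one odd) and $e_j\in X_j$, the vector $e_j$ would lie in all three global spaces $X,Y,Z$; then every word in the three projections fixes $e_j$, giving $|\Psi^j(Z,X,Y)e_j-e_{j+1}|=\sqrt2$. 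The transport mechanism works only because $Y_j$ is tilted with respect to $X_j$; forcing the $e$'s into $Y_j$ freezes it.

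The correct route, visible in the proof of Proposition~\ref{squeorthoblocs}, keeps $e_i,e_{i+1}$ in $X_i$ only (hypothesis (vi) of the statement) and recovers $W_j$ approximately rather than exactly: since $e_j\in X_{j-1}$ and $P(E_{j-1})E_j=\vee e_j$, one has $P(X_{j-1})P(E_j)=P(\vee e_j)$ exactly, and conclusion (ii) of Lemma~\ref{orthobuildingbloc} transports this to the neighbour's tilted space at cost $\eta$, i.e.\ $\|P(Y_{j\pm 1})P(E_j)-P(\vee e_j \text{ resp. } \vee e_{j+1})\|\le\eta_{j\pm1}$. For this to meet hypothesis (iv), namely $\|P(W_j)-P(Z)P(E_j)\|\le\eps_j/N_j$, the tolerance $\eta_i$ must be calibrated to the \emph{neighbouring} blocks: the paper takes $\eta_i=\eps_{i+1}/N_{i+1}$ after normalizing $\eps_1>\eps_2>\cdots$ and $N_1\le N_2\le\cdots$, so that $\eta_{j-1}+\eta_{j+1}\le 2\eps_j/N_j$. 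Your independent choice $\eta_i=\eps_i$ fails here even on its own: the error at block $j$ inherits $\eta_{j-1}=\eps_{j-1}$, which is in general far larger than the budget $\eps_j/N_j$ of the $W$-slot (the slot tolerance is $\eps/N$, not $\eps$). With these two repairs --- dropping the containment claim in favour of the $\eta$-closeness mechanism, and re-tuning $\eta_i$ --- your argument becomes the paper's.
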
 

We allow a slight skewing  in the above  rigid orthonormal structure.
This will enable us later to fit the spaces $X,Y,Z$ inside  given subspaces of $H$.

\begin{proposition}\label{squeorthoblocs}
For every sequence of positive numbers   $\eps_i>0$, $i\in \NN$, there exist  $N_i\in \NN$ and  $\psi^i\in \SSS_3$ so  that 
$N_1\leq N_2\leq \dots$ and $|\psi^1|\leq |\psi^2|\leq \dots$ with the following property.

Let $\{w_n\}$ be an orthonormal sequence in a Hilbert space $H$ and  $e_1=w_1,e_2,e_3,\dots$ be a subsequence of $\{w_n\}$.
 Let   $\{E_i\}_{i=1}^n$ be a sequence  of spans of consecutive, with one vector $e_i$ overlapping, blocks of $\{w_n\}$  so that 
\begin{enumerate}
\item[(i)]  $\dim E_i\geq 2N_i$;
\item[(ii)] $e_i, e_{i+1}\in E_i$;
\item[(iii)] $\vee e_{i+1}=P(E_i)E_{i+1}=P(E_{i+1})E_i$;
\item[(iv)] $E_i\perp E_j$ if $|i-j|\geq 2$.
\end{enumerate}
Then there exist  subspaces $X_i,Y_i$ of $H$ so that 
\begin{enumerate}
\item[(v)]   $X_i,Y_i\subset E_i$;
\item[(vi)]  $e_i, e_{i+1}\in \chi_i\subset \{w_n\}$, where $\chi_i$  is an orthonormal basis  of $X_i$;
\item[(vii)]   $|\psi^i(W_i, X_i,Y_i)e_i-e_{i+1}|<\eps_i$, where $W_i=e_i\vee e_{i+1}$.
\end{enumerate}
Suppose, moreover, that
 $S, T,U:\spn\{w_n\}\to  H$     are  linear   so that
\begin{enumerate}

\item[(viii)]  $\|P(R(E))-P(E)\|\leq (\eps_{i+1}/|\psi^{i+1}|)^2$   for all subspaces $E\subset E_i$ and all linear mappings $R$ so  that $R(w_n)\in \{S(w_n), T(w_n), U(w_n)\}$.

\item[(ix)] Let $\tilde X_i\subset S(E_i)$, $\tilde Y_i\subset T(E_i)$, $\tilde Z_i\subset U(E_i)$  and
let $\tilde X=\bigvee_{i=1}^{\infty} \tilde X_i$, $\tilde Y=\bigvee_{i=1}^{\infty} \tilde Y_i$, and $\tilde Z=\bigvee_{i=1}^{\infty} \tilde Z_i$.
\end{enumerate}
For $i\leq 0$ define $e_i=0$ and $X_i=Y_i=\{0\}$. \\
Assume that for a fixed $j\in \NN$,  
\begin{enumerate}
 \item[(x)]  $\tilde X_j=S(X_j)$;
 \item[(xi)] $\tilde Y_j=T(Y_j)$, $\tilde Y_{j-1}\subset \vee T(e_{j-1})$, and 
$\tilde Y_{j+1}\subset \vee T(e_{j+2})$;
\item[(xii)] There are four options for $\tilde Z_j$:
\begin{enumerate}
 \item[1)] $\tilde Z_j= \vee\{U(e_j),  U(e_{j+1})\}$, $\tilde Z_{j-1}=\tilde Z_{j+1}=\{0\}$;
 \item[2)] $\tilde Z_j= \vee U(e_j)$, $\tilde Z_{j-1}=\{0\}$, $\tilde Z_{j+1}=U(Y_{j+1})$ and $\tilde Z_{j+2}\subset \vee U(e_{j+3})$;
 \item[3)] $\tilde Z_j= \vee U(e_{j+1})$, $\tilde Z_{j+1}=\{0\}$, $\tilde Z_{j-1}=U(Y_{j-1})$ and $\tilde Z_{j-2}\subset \vee U(e_{j-2})$;
 \item[4)] $\tilde Z_j=\{0\}$,   $\tilde Z_{j-1}=U(Y_{j-1})$, $\tilde Z_{j-2}\subset \vee U(e_{j-2})$, and, similarly $\tilde Z_{j+1}=U(Y_{j+1})$ and $\tilde Z_{j+2}\subset \vee U(e_{j+3})$.
\end{enumerate}
\item[(xiii)] If $t\in\{j-1,j,j+1\}$, then $|\langle f,g\rangle|\leq (\eps_{j+1}/|\psi^{j+1}|)^2$ for all $|f|=|g|=1$ so that  $f\in E_t\cup S(E_t)\cup T(E_t)\cup U(E_t)$    and 
\begin{equation}\notag
g\in G_{R,t}=\bigvee_{w_n\notin E_t} R(w_n),
\end{equation} 
\end{enumerate}
where $ R(w_n)\in \{w_n, S(w_n), T(w_n), U(w_n)\}$. 
Then
\begin{equation}\label{oest}
|\psi^j(\tilde Z, \tilde X,\tilde Y)e_j-e_{j+1}|<30\eps_j.
\end{equation}
\end{proposition}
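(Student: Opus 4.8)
The plan is to read Proposition~\ref{squeorthoblocs} as a \emph{stable} (perturbed) version of Lemma~\ref{orthoblocs}: first extract the rigid data $N_i,\psi^i,X_i,Y_i$ from the purely orthonormal setting, and then show that replacing the rigid local spaces by the skewed global spaces $\tilde X,\tilde Y,\tilde Z$ moves the value of the block word $\psi^j$ on $e_j$ by at most $29\eps_j$. The construction of $N_i,\psi^i$ and properties (v)--(vii) come almost verbatim from Lemma~\ref{orthoblocs} applied to the orthonormal subsequence $\{e_i\}$ and the coordinate blocks $E_i$; the inherited bound $|\psi^j_1|\le N_j$ on the first (i.e.\ $\tilde Z$) letter is what makes $\tilde Z$ the sparsely applied ``reference'' space. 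The only addition is (vi): since in the building block Lemma~\ref{orthobuildingbloc} the space playing the role of $X$ is a free input, I would choose $X_i=\spn\chi_i$ for a finite subset $\chi_i\subset\{w_n\}$ of generators of the coordinate block $E_i$ containing $e_i,e_{i+1}$ and of dimension at least $N_i$, so that $\chi_i$ is automatically an orthonormal basis of $X_i$ drawn from $\{w_n\}$. Monotonicity $N_1\le N_2\le\cdots$ and $|\psi^1|\le|\psi^2|\le\cdots$ is arranged by enlarging $N_i$ and padding the words, which the building block permits.

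For the estimate \eqref{oest} I fix $j$ and work on the invariant neighborhood $F=E_{j-1}\vee E_j\vee E_{j+1}$, comparing $\psi^j(\tilde Z,\tilde X,\tilde Y)$ with a rigid reference word through a two-step argument. The first step \emph{de-skews}, i.e.\ replaces each global space $\tilde X=\bigvee_i\tilde X_i$, $\tilde Y=\bigvee_i\tilde Y_i$, $\tilde Z=\bigvee_i\tilde Z_i$ by its local piece near $j$: by the near-orthogonality hypothesis (xiii) all summands living in blocks other than those touching $E_j$, together with the one-dimensional stray pieces $\tilde Y_{j\pm1}\subset\vee T(e_{j-1}),\vee T(e_{j+2})$, are almost orthogonal to $F$, so by an estimate for near-orthogonal families of the type collected in Section~\ref{appen} their contribution to $P(\tilde X)P(F)$, $P(\tilde Y)P(F)$, $P(\tilde Z)P(F)$ is of order $(\eps_{j+1}/|\psi^{j+1}|)^2$ per summand. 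Only the $O(1)$ neighbors $j\pm1$ survive, because $E_i\perp E_{i'}$ whenever $|i-i'|\ge2$ by (iv) and (viii). The second step uses (x)--(xii) to recognize the surviving local pieces as the images under $S,T,U$ of the rigid spaces $X_j,Y_j$ and of $W_j$-type coordinate spaces, and then (viii) to see that $S,T,U$ move these projections by at most $(\eps_{j+1}/|\psi^{j+1}|)^2$.

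I would then feed the resulting per-letter operator-norm differences into Lemma~\ref{wordcont}, taking the rigid projections (all inside $F$) as the commuting family $A_i$ and $E=P(F)$. The $\tilde Z$-letter occurs at most $N_j$ times and the $\tilde X,\tilde Y$-letters at most $|\psi^j|$ times; multiplying the per-letter errors by these multiplicities and using the monotonicity $|\psi^{j+1}|\ge|\psi^j|\ge N_j$ together with the \emph{squared} smallness in (viii) and (xiii) keeps each of the three contributions of order $\eps_j$. It remains to identify, in each of the four options (xii) for $\tilde Z$, the rigid reference configuration for which the word property (vii) applies, respecting the parity-dependent assignment of the reference role to $\tilde Z$ versus $\tilde Y$ that comes from the splitting $Z=\bigvee_k Y_{2k}$, $Y=\bigvee_k Y_{2k+1}$ in Lemma~\ref{orthoblocs}; this gives $|\psi^j(\text{rigid})e_j-e_{j+1}|<\eps_j$, and collecting the triangle inequalities yields the stated bound $30\eps_j$.

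The main obstacle is the de-skewing step combined with option~4 of (xii), where $\tilde Z_j=\{0\}$ and the $\tilde Z$-letters no longer act inside $E_j$ but spill into $E_{j\pm1}$ through $\tilde Z_{j\pm1}=U(Y_{j\pm1})$; one must verify that the per-step near-orthogonality error $(\eps_{j+1}/|\psi^{j+1}|)^2$ does not accumulate catastrophically across the $|\psi^j|$ letters of the word. This is precisely why the hypotheses are squared and the lengths $|\psi^i|$ are monotone, so that $|\psi^j|\cdot(\eps_{j+1}/|\psi^{j+1}|)^2$ remains $O(\eps_j)$. The secondary delicate point is the bookkeeping for the shared overlap vectors $e_j,e_{j+1}$ of adjacent coordinate blocks, which are the only vectors of $E_{j\pm1}$ not almost orthogonal to $E_j$ and hence the only ones not killed by (xiii).
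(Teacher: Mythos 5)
Your overall architecture does coincide with the paper's: extract the rigid data $N_i,\psi^i,X_i,Y_i$ from Lemma~\ref{orthobuildingbloc}, localize $P(\tilde X),P(\tilde Y),P(\tilde Z)$ on $E_j$ via (xiii) and the almost-orthogonal splitting of Lemma~\ref{justforme1}, transfer from skewed to rigid spaces via (viii), and assemble through the ``moreover'' part of Lemma~\ref{orthobuildingbloc} (which is exactly Lemma~\ref{wordcont} packaged), using the multiplicity budget $|\psi^j_1|\le N_j$ for the sparse $\tilde Z$-letter. But there is a genuine gap at what is in fact the crux of the proof: in options 2)--4) of (xii) the surviving local pieces of $\tilde Z$ near $E_j$ are $\tilde Z_{j\pm1}=U(Y_{j\pm1})$, and these are \emph{not} images of ``$W_j$-type coordinate spaces,'' as your second step asserts. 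After de-skewing and applying (viii) you are left with $P(Y_{j-1})P(E_j)$ and $P(Y_{j+1})P(E_j)$, and you must show these act like $P(\vee e_j)$ and $P(\vee e_{j+1})$, i.e.\ that $e_j$ is almost contained in $Y_{j-1}$ and $e_{j+1}$ in $Y_{j+1}$. Nothing in (viii)--(xiii) gives this: a generic $Y_{j-1}\subset E_{j-1}$ produced by the divergence construction need not nearly contain the overlap vector $e_j$, so your plan stalls precisely where $P(\tilde Z)P(E_j)$ must be matched to $P(W_j)$.

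The missing ingredient is property (ii) of Lemma~\ref{orthobuildingbloc}, $\|P(X_i)-P(Y_i)\|<\eta_i$, invoked with the anticipatory choice $\eta_i=\eps_{i+1}/N_{i+1}$ made when the $Y_i$ are constructed, so that $\eta_{j-1}\le\eps_j/N_j$ for every later $j$. Combined with the coordinate structure (vi) of the neighbors --- the orthonormal bases $\chi_{j\pm1}\subset\{w_n\}$ of $X_{j\pm1}$ contain the overlap vectors, whence $P(X_{j-1})P(E_j)=P(\vee e_j)$ and $P(X_{j+1})P(E_j)=P(\vee e_{j+1})$ --- this is what yields
$$
\|P(\tilde Z)P(E_j)-P(W_j)\|\le \eta_{j-1}+\eta_{j+1}+11\eps_j/|\psi^j|\le 13\eps_j/N_j,
$$
exactly the input hypothesis (iv) of Lemma~\ref{orthobuildingbloc} requires (note the budget here is $\eps/N$, not $\eps/|\psi|$, which is why the $\eta_i$ must be tied to $N_{i+1}$). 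Two smaller miscalibrations: the obstacle you single out as main (accumulation of per-letter errors over $|\psi^j|$ letters) is the routine part, already absorbed by the ``moreover'' clause of Lemma~\ref{orthobuildingbloc}; and the squared smallness in (viii) and (xiii) is needed chiefly because Lemma~\ref{justforme1}(iii) returns $4\sqrt{\alpha}$, so the squares are consumed by that square root rather than by letter-counting as you suggest.
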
 
\begin{proof} Assume $1>\eps_1>\eps_2>\dots$. 
We define $N_0=1$  and 
for each $i\in \NN$ we  choose $N_i=N(\eps_i)\geq N_{i-1}$  according to Lemma~\ref{orthobuildingbloc}. Then we  put $\eta_i=\eps_{i+1}/N_{i+1}$ for each $i\in \NN$.
Again, we use  Lemma~\ref{orthobuildingbloc} to choose $\psi^i\in \SSS_3$.
We define $X_i$ as an $N_i$-dimensional subspace of $E_i$ so that $e_i, e_{i+1}\in \chi_i\subset \{w_n\}$ for an orthonormal basis $\chi_i$ of $X_i$.
 We choose $Y_i\subset E_i$ according to Lemma~\ref{orthobuildingbloc}.

Since   
$
\tilde X=\tilde X_j\vee (\tilde X \cap G_{S,j})$ by (vi),  
Lemma~\ref{justforme1} and (xiii) imply that 
$$
\|P(\tilde X_j)+P(\tilde X\cap G_{S,j})-P(\tilde X)\|\leq 4  \eps_j/|\psi^j|.
$$
Hence by (viii) and (xiii),
\begin{equation}\label{X}
\begin{split}
\|P(\tilde X)P(E_j)-P(X_j)\| \leq &\|P(\tilde X_j)+P(\tilde X\cap G_{S,j})-P(\tilde X)\| \\ &+\|P(\tilde X_j)-P(X_j)\|+\|P(\tilde X\cap \tilde G_{S,j})P(E_j)\|\\
&\leq 6\eps_j/|\psi^j|.
\end{split}
\end{equation}
Similarly, by (xi), $\tilde Y=\tilde Y_j\vee (\tilde Y \cap G_{T,j})$; hence 
$$
\|P(\tilde Y_j)+P(\tilde Y\cap G_{T,j})-P(\tilde Y)\|\leq 4  \eps_j/|\psi^j|,
$$
and
\begin{equation}\label{Y}
\|P(\tilde Y)P(E_{j})-P(Y_{j})\|\leq 6\eps_{j}/|\psi^{j}|.
\end{equation}
It remains to prove that 
\begin{equation}\label{W}
\begin{split}
\|&P(\tilde Z)P( E_{j})-P(W_{j})\| 
\leq  13\eps_j/N_j,
\end{split}
\end{equation}
as then  (\ref{X}), and (\ref{Y}) and 
   Lemma~\ref{orthobuildingbloc} imply that
$$
\bigl\|\psi_j(\tilde Z, \tilde X,\tilde Y)e_j-e_{j+1}\|<30\eps_j.
$$
If 1) of (xii) is satisfied, then 
$\tilde Z=\tilde Z_j\vee (\tilde Z \cap G_{U,j})$ hence by (xiii) and Lemma~\ref{justforme1},
$$
\|P(\tilde Z_j)+P(\tilde Z\cap G_{U,j})-P(\tilde Z)\|\leq 4  \eps_j/|\psi^j|.
$$
Hence
\begin{equation}\notag
\begin{split}
&\|P(\tilde Z)P( E_{j})-P(W_{j})\| \\
&\leq \|P(\tilde Z_j)-P(W_j)\|\|P(E_j)\|+\|P(\tilde Z\cap G_{U,j})P(E_j)\| +4  \eps_j/|\psi^j|\leq 6  \eps_j/|\psi^j|
\end{split}
\end{equation}
by (viii) and (xiii).
 
In the other extreme case, when 4) of (xii) is satisfied, we have 
$\tilde Z=\tilde Z_{j-1} \vee \tilde Z_{j+1}\vee  (\tilde Z \cap G)$,
where 
$$
G=\bigvee_{w_n\notin E_{j-1}\cup E_j\cup E_{j+1} } U(w_n).
$$
Hence by (xiii) applied to $G_{U,t}$ for $t\in \{j-1,j,j+1\}$ and Lemma~\ref{justforme1}, 
$$
\|P(\tilde Z)-P(\tilde Z_{j-1})-P(\tilde Z_{j+1})-P(\tilde Z \cap G)\|\leq8\eps_{j}/|\psi^{j}|
$$
and 
$$
\|P(\tilde Z \cap G)P(E_j)\|\leq \eps_{j}/|\psi|^{j}.
$$

Since $X_{j-1}$ and $Y_{j-i}$ were chosen so that 
$\|P(X_{j-1})-P(Y_{j-1})\| 
  \leq  \eta_{j-1}$, by (viii),
\begin{equation}\notag
\begin{split}
\|&(P(X_{j-1})-P(\tilde Z_{j-1}))P(E_j)\| \\ &\leq
 \|P(X_{j-1})-P(Y_{j-1})\| 
 +\|(P(\tilde Z_{j-1})-P(Y_{j-1}))\| \\
 &\leq  \eta_{j-1}+(\eps_j/|\psi^j|)^2.
 \end{split}
\end{equation}
Similarly,
\begin{equation}\notag
\|(P(X_{j+1})-P(\tilde Z_{j+1}))P(E_j)\|\leq \eta_{j+1}+(\eps_j/|\psi^j|)^2.
\end{equation}
By the above four inequalities, 
\begin{equation}\notag
\begin{split}
\|&P(\tilde Z)P( E_{j})-P(W_{j})\|= \|(P(\tilde Z) - P(X_{j-1}+ X_{j+1}))P(E_{j})\|\\
& \leq 9\eps_{j}/|\psi^{j}|+\|(P(\tilde Z_{j-1})+P(\tilde Z_{j+1})  - P(X_{j-1})-P( X_{j+1}))P( E_{j}) \| \\ 
\leq & \eta_{j-1}+\eta_{j+1} +11\eps_{j}/|\psi^{j}|
\leq  2\eps_j/N_{j}+11\eps_{j}/|\psi^{j}|
\leq  13\eps_j/N_j.
\end{split}
\end{equation}
In 2) and 3) of (xii) one proceeds similarly.
 \end{proof}

The following corollary is interesting in particular in the case 
where $L=\bigcap_{k\in [K]} L_k$ is finite dimensional.
If it is infinite dimensional, the existence of the spaces $\tilde L_k\subset L$ was established  in \cite{KM}.

\begin{corollary}\label{inspaces}
Let $H$ be a Hilbert space and $\{w_n\}_{n\in \NN}$ a normalized weakly-null sequence in $H$. Let $K\geq 3$ and let  $L_1,\dots, L_K$  be closed subspaces of $H$ so that 
$$
\lim_{n\to \infty}\max\{\dist(w_n,L_k):\, k\in \{1,\dots,K\}\}=0.
$$
Then there exist closed subspaces $\tilde L_k\subset L_k $, a starting point $z_0\in H$,  and a sequence of indices $k_1, k_2,\dots \in  [K]$ containing each $i\in {K}$ infinitely  often  so that the sequence of iterates defined by $z_n= \tilde L_{k_n} z_{n-1}$ does not converge in norm.
\end{corollary}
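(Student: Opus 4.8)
The plan is to feed the hypothesis directly into Proposition~\ref{squeorthoblocs}: I would use the given sequence to manufacture an orthonormal ``scaffold'' that simultaneously lies close to every $L_k$, build the spaces $\tilde L_k$ inside the $L_k$ out of the blocks supplied by that proposition, and then read the divergent product off the conclusion \eqref{oest}. (When $\bigcap_k L_k$ is infinite dimensional one can instead quote \cite{KM} to obtain $\tilde L_k\subset\bigcap_k L_k$ outright; the substantial case is the one below.)

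\emph{Scaffold and skewing maps.} First I would replace $\{w_n\}$ by an orthonormal sequence. Since $\{w_n\}$ is normalized and weakly null, a diagonal argument extracts a subsequence with Gram matrix as close to the identity as desired, and Gram--Schmidt then yields an orthonormal sequence (renamed $\{w_n\}$) with each vector arbitrarily close to the original, hence still satisfying $\dist(w_n,L_k)\to 0$ for every $k$. I fix $\eps_i=1/i$, let $N_i,\psi^i$ be the data returned by Proposition~\ref{squeorthoblocs}, and partition the scaffold into consecutive blocks overlapping in single vectors $e_1,e_2,\dots$ with $\dim E_i\geq 2N_i$; conditions (i)--(iv) then hold automatically for an orthonormal family. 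For each $k$ I set $R_k(w_n)=P(L_k)w_n\in L_k$, and by placing in block $E_i$ only scaffold vectors far enough along that $\max_k\dist(\cdot,L_k)<(\eps_{i+1}/|\psi^{i+1}|)^2$, the perturbation and near-orthogonality hypotheses (viii), (xiii) are met.

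\emph{Roles and the spaces $\tilde L_k$.} Next I choose the combinatorial data selecting which three subspaces are ``active'' at each stage: a sequence of triples $\EE_1,\EE_2,\dots\in\binom{[K]}{3}$ with $|\EE_i\cap\EE_{i+1}|\geq 2$ and with every $k\in[K]$ in infinitely many $\EE_i$. Such a sequence exists for all $K\geq 3$ (for $K\geq 5$ it is the triple-sequence induced by a walk in $J(K,4)$; in general it is a walk in the connected graph on $3$-subsets, degenerating to the constant triple for $K=3$). Feeding it into the role construction preceding this section gives the sequences $\{s^k_i\}$, labelling $\EE_i$ by $X,Y,Z$ so that, by Lemma~\ref{walk}, each subspace keeps a fixed role while continuously active and is absent for at least two stages between activations. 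At stage $i$ I build $\tilde X_i,\tilde Y_i,\tilde Z_i$ inside $R_{k_{i,X}}(E_i),R_{k_{i,Y}}(E_i),R_{k_{i,Z}}(E_i)$ as prescribed by (x)--(xii) (the four $\tilde Z$-options encoding entering/leaving/isolated/persistent activity), and set $\tilde L_k=\bigvee\{\text{all block spaces assigned to }k\}\subset L_k$.

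\emph{Iteration, divergence, and the obstacle.} The index sequence is the concatenation over $j=1,2,\dots$ of $\psi^j$ with $X\mapsto k_{j,X}$ etc.; since every $\psi^j$ involves all three letters (as in \cite{P,KP}) and each $k$ is active infinitely often, each index appears infinitely often. Starting from $z_0=e_1$, I claim the iterate after the $j$-th word is within $30\eps_j$ of $e_{j+1}$: the hypotheses (x)--(xiii) involve only the blocks $E_{j-2},\dots,E_{j+3}$, so Proposition~\ref{squeorthoblocs} is a local statement that I apply stage by stage with $S,T,U$ the skewing maps active near $j$, obtaining $|\psi^j(\tilde Z,\tilde X,\tilde Y)e_j-e_{j+1}|<30\eps_j$. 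Since $\{e_j\}$ is orthonormal, $|e_j-e_{j'}|=\sqrt 2$ for $j\neq j'$, so the node-time iterates converge to a pairwise $\sqrt 2$-separated set while their norms stay near $1$, and $\{z_n\}$ cannot converge in norm. The hard part will be the verification that the \emph{mixed} spaces $\tilde L_k$, assembled from blocks spread over many stages and roles, nonetheless satisfy (x)--(xiii) simultaneously at every stage: this is exactly where the blocks-of-length-$\geq 2$ structure of Lemma~\ref{walk} is indispensable, guaranteeing that within the three-block window around stage $j$ the subspace $k_{j,X}$ carries a single role with no stale blocks interfering, so that $P(\tilde L_{k_{j,X}})$ restricted to the iterate's support collapses to the single block projection $P(\tilde X)$ that the proposition expects. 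Managing the four transition cases and keeping the accumulated errors summably small, so the stagewise $30\eps_j$ estimates chain into genuine divergence, is the bulk of the work.
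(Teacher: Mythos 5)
Your overall architecture (orthonormalize the sequence, skew the blocks of Proposition~\ref{squeorthoblocs} into the $L_k$ via maps $w_n\mapsto w_n^k\in L_k$, chain the estimate \eqref{oest}) is the paper's, but two of your concrete choices break the argument. First, $\eps_i=1/i$ is fatal to the endgame: the only way to get from the stagewise bound $|\psi^j(\tilde Z,\tilde X,\tilde Y)e_j-e_{j+1}|<30\eps_j$ to divergence is the telescoping estimate $\|R_n\cdots R_1e_1-e_{n+1}\|\leq 30\sum_{j\leq n}\eps_j$, and you need this to stay below a fixed constant (the paper keeps it $\leq 1/2$, comfortably under the $\sqrt 2$ separation of the orthonormal $e_j$'s) \emph{uniformly in $n$}. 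With $\eps_j=1/j$ the right-hand side diverges and the iterates are not pinned near any $e_{n+1}$, so no separation survives; you must take $\sum\eps_i$ small (the paper uses $\eps_i=200^{-i}$). This is also not merely cosmetic bookkeeping you can defer: the thresholds $(\eps_{i+1}/|\psi^{i+1}|)^2$ in (viii) and (xiii) are calibrated against the same summability.

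Second, your combinatorial scheme for $K\geq 4$ has a real hole and is, moreover, the wrong tool for this statement. Lemma~\ref{walk} is proved only for triple sequences $\EE_i=A_i\cap A_{i+1}$ coming from a walk in $J(K,4)$; a general ``walk in the connected graph on $3$-subsets'' with $|\EE_i\cap\EE_{i+1}|\geq 2$ admits activity gaps of length one (e.g.\ $\{1,2,3\},\{2,3,4\},\{1,2,3\}$ puts $1$ out for a single step), so the blocks-of-zeros-of-length-$\geq 2$ structure fails, the role handoff $s^d_i=s^c_{i-1}$ and the four $\tilde Z$-options of (xii) are no longer available, and exactly the verification you yourself flag as ``the bulk of the work'' collapses. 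The paper avoids all of this in Corollary~\ref{inspaces}: since the hypothesis puts \emph{every} $L_k$ close to the \emph{whole} sequence $\{w_n\}$, it runs the three-space construction on $L_1,L_2,L_3$ only (constant roles $S,T,U$, with $\tilde Y_i$ supported on odd blocks and $\tilde Z_i$ on even blocks), sets $\tilde L_k=L_k$ for $k\geq 4$, and appends $P(L_K)\cdots P(L_4)$ to each word; because $|P(L_k)e_i-e_i|\leq\alpha_i$ this costs only an extra summable error ($60\eps_i$ per stage) while making every index appear infinitely often. The Johnson-walk/role machinery you invoke is needed only in Theorem~\ref{hypothesis}, where different $4$-tuples of spaces are close to \emph{different} segments of the sequence; importing it here both complicates the proof and leaves its hardest step unestablished.
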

\begin{proof} 
According to Lemma~\ref{seqeq}, 
we may assume that $\{w_n\}$ is an orthonormal sequence.
For   $\eps_i=200^{-i}$  we choose $N_i\in \NN$, $\psi^i\in \SSS_3$   according to  Proposition~\ref{squeorthoblocs} and 
  define
$\beta_n=(\eps_{n+1}/|\psi^{n+1}|)^2/8$.
We choose a decreasing sequence $0<\alpha_n< 1/(K\cdot200^n)$ of numbers small enough to   accommodate the   property described below and
so that $2\sum_n^{\infty} \alpha_i<\beta_n$.
 
Assume $x_n\in H$ satisfy $|w_n-x_n|\leq \alpha_n$ for all $n\in \NN$ and 
  define a linear mapping $S$ by
 $S(w_n)=x_n$. Then  
\begin{equation}\label{8}
\|P(S(E))-P(E)\|\leq  \beta_n
\end{equation}
 for all subspaces $E\subset H_n:=\bigvee_{j=n}^{n+2N_n}w_j$ and for all $n\in \NN$. 
 
 By taking a subsequence, we call it $\{w_n\} $ again,  we can assume that for a suitably choosen $w^k_n\in L_k$
\begin{equation}\label{app}
|w_n-w^k_n|\leq  \alpha_n
\mbox{  for all } n\in \NN \mbox{ and } k\in [K].
\end{equation}  
Then, 
 clearly, (\ref{8}) is also satisfied  if $n\leq M$ and  $E\subset \bigvee_{j=M}^{M+2N_n}w_j$. 

Assume, moreover, we have chosen the subsequence  so that 
$|\langle w_n,g\rangle|\leq  \alpha_n$ for all
$$
g\in G_n=\bigvee \{w_i, w_i^k:\, i<n,\, k\in [K]\}
$$
with $|g|\leq 1$ and all $n\in \NN$. This is possible, because $G_n$ is finite dimensional and $\{w_n\}$ converges weakly to zero.
Suppose $x_j\in \{w_j, w_j^k:\,  k\in [K]\}$, $j\in \NN$. Then according to Lemma~\ref{justforme3} and the choice of $\{\alpha_n\}$ we have that $|\langle f,g\rangle|\leq  2\beta_n$ for all $f\in  \bigvee_{n \leq j} x_j$ and $g\in G_n$ with $|f|=|g|=1$.
 If $n<N$, then by Lemma~\ref{justforme1},
\begin{equation}\label{13}
|\langle f,g\rangle| \leq  8\beta_n \mbox{ if } f\in  \bigvee_{n \leq j \leq N}x_j,\   g\in G_n\vee   \bigvee_{N<l}x_l,\mbox{ and } |f|=|g|=1. 
\end{equation}
In order to apply  Proposition~\ref{squeorthoblocs}, we define $e_1=w_1$ and
we  group  $\{w_{n}\}$      into successive blocks of length $2N_1,2N_2,\dots$.   The neighboring blocks  overlap by exactly one vector which we call $e_2,e_3,\dots$.   The spans of the blocks we call $E_1,E_2,\dots$. By (\ref{app}) we have 
    $|P(L_k)e_n-e_n|\leq  \alpha_n$ for all $k\in  [K]$ and $n\in \NN$.

First assume $K=3$. 
We define linear mappings $S,T,U: \spn\{w_n\}\to H$
  so that 
\begin{equation}\notag
\begin{split}
S(w_n)=&w^1_{n} \\
T(w_n)=&w^2_{n}   \\
U(w_n)=&w^3_{n}.
\end{split}
\end{equation}
Then by (\ref{8}) and (\ref{13}),  assumption    (viii)  and the assumption (xiii)  of Proposition~\ref{squeorthoblocs}  are satisfied for any $j\in \NN$.  Choose $X_i,Y_i\subset E_i$ according to Proposition~\ref{squeorthoblocs} and for $i\in \NN$, define  $\tilde X_i=S(X_i)$
and 
$$
\tilde Y_i=
\begin{cases}
T(Y_i) \mbox{ if $i$ is odd,}  \\
\{0\} \ \ \ \mbox{ if $i$ is even,} 
\end{cases}
\tilde Z_i=
\begin{cases}
U(Y_i) \mbox{ if $i$ is even,}  \\
\{0\} \ \ \ \mbox{ if $i$ is odd,} 
\end{cases}
$$
except for $Z_1$ which we define as $\vee U(e_1)$.
Assumptions (x), (xi), and (xii) of Proposition~\ref{squeorthoblocs}
are satisfied for all odd $j\in \NN$. If we switch between $\tilde Y_j$  
and $\tilde Z_j$, they are also satisfied  for all even $j\in \NN$.
 
   Choose  closed subspaces
 $\tilde X,\tilde Y,\tilde Z\subset H$ according to  (ix) of Proposition~\ref{squeorthoblocs} and call them $\tilde L_1, \tilde L_2, \tilde L_3$. Then $\tilde L_k\subset L_k $. Write for short $R_j=\psi^j(\tilde Z,\tilde X,\tilde Y)$ if $j$ is  odd and $R_j=\psi^j(\tilde Y,\tilde X,\tilde Z)$ if $j$ is even.
 Since $\|R_j\|\leq 1$, we have by induction
 \begin{equation}\label{11}
 \begin{split}
 \|R_n&R_{n-1}\dots R_1 e_1 -e_{n+1}\| \\
 &\leq \|R_nR_{n-1}\dots R_2(R_1 e_1 -e_2)\|+ \|R_nR_{n-1}\dots R_2e_2-e_{n+1}\| \\
 &\leq 30\eps_1+\|R_nR_{n-1}\dots R_3(R_2 e_2 -e_3)\|+ \|R_nR_{n-1}\dots R_3e_3-e_{n+1}\| \\
& \leq 30\eps_1+30\eps_2+\dots+\|R_n e_n-e_{n+1}\|\leq 30(\eps_1+\dots+\eps_n)\\
& \leq 60(\eps_1+\dots+\eps_n)\leq 1/2
 \end{split}
 \end{equation}
 for all $n\in \NN$. Since $\{e_i\}$ is an orthonormal sequence, the norm-limit
 $\lim_{n\to \infty} R_nR_{n-1}\dots R_1 e_1$ does not exist.

If $K>3$ define $\tilde L_k=L_k$ for $k\in \{4,\dots, K\}$ and
if   $j$ is odd $R_j=\psi_j (\tilde Z,\tilde X,\tilde Y)P(L_K)\dots P(L_4)$; for even $j$ we define $R_j$  similarly. Then
\begin{equation}\notag
\begin{split}
|R_i e_i-e_{i+1}|\leq &|\psi^i(\tilde Z,\tilde X,\tilde Y)e_i-e_{i+1}| \\&+\|\psi^i(\tilde Z,\tilde X,\tilde Y)\|\cdot |P(L_K)\dots P(L_4)e_i-e_i|\leq 60\eps_i.
\end{split}
\end{equation}
The statement then again follows  from (\ref{11}).
\end{proof}

 We show next that  for $K=3$ and $K=4$  the existence of a diverging product  of projections  is equivalent to $L_1^{\perp}+ \dots +L_K^{\perp}$  not being closed.

Notice that for (i) of the next theorem to be true, $H$ has to be infinite dimensional.
If $L=\bigcap_{k\in [K]} L_k$ is finite dimensional, in particular   if $L=\{0\}$, then it is enough to require $\tilde L_k\subset L_k$ in (iii).
If $L$ is infinite dimensional this condition cannot be dropped, as there always exist 
 $\tilde L_k  \subset L$ as required  in (iii) \cite{KM, KP}.

\begin{theorem}\label{3-eq}
Let $H$ be a Hilbert space, $K=3$ or $K=4$,   
and let $L_1,\dots,L_K$ be closed subspaces of $H$; denote $L=\bigcap_{k\in [K]} L_k$.
The following statements are equivalent:
\begin{enumerate}
\item[(i)] $L_1^{\perp}+\dots+L_K^{\perp}$ is {\em not} closed in $H$;
\item[(ii)]   for every sequence $\{a_n\}\in c_0$, there exists 
a starting point $z\in H$ so that    $|T^nz-P(\bigcap L_k)z|\geq a_n$; 
\item[(iii)]   there exist    closed subspaces  $\tilde L_k\subset L_k\cap L^{\perp}$, a starting point $z_0\in H$  and a sequence of indices $k_1, k_2,\dots \in  [K]$ containing each $k\in [K]$ infinitely often, so that the sequence of iterates defined by $z_n= P(\tilde L_{k_n}) z_{n-1}$ does not converge in norm.
\end{enumerate}
\end{theorem}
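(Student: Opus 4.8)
The plan is to prove all three statements equivalent by establishing (i)$\Leftrightarrow$(ii), (i)$\Rightarrow$(iii), and (iii)$\Rightarrow$(i). The equivalence (i)$\Leftrightarrow$(ii) is the cyclic dichotomy of \cite{BDH, DH, BaGM1, BaGM2} for the product $T=P(L_K)\cdots P(L_1)$ and needs no new argument. The two remaining implications are where the restriction $K\in\{3,4\}$ enters: the engine for (i)$\Rightarrow$(iii) is Corollary~\ref{inspaces}, and the engine for (iii)$\Rightarrow$(i) is Proposition~\ref{34} combined with Lemma~\ref{BB}.

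For (i)$\Rightarrow$(iii) I would first invoke Lemma~\ref{BB}: since $L_1^{\perp}+\dots+L_K^{\perp}$ is not closed, there is a normalized weakly-null sequence $\{w_n\}$ in $L^{\perp}$ with $\lim_n\max\{\dist(w_n,L_k):\,k\in[K]\}=0$. I would then pass to the reduced spaces $L_k':=L_k\cap L^{\perp}$. Because $L\subset L_k$ gives the orthogonal splitting $L_k=L\oplus L_k'$ and $w_n\in L^{\perp}$, one has $\dist(w_n,L_k)=\dist(w_n,L_k')$, so the same sequence witnesses $\lim_n\max\{\dist(w_n,L_k'):\,k\in[K]\}=0$, while $\bigcap_k L_k'=L\cap L^{\perp}=\{0\}$. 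Applying Corollary~\ref{inspaces} to the family $\{L_k'\}$ (legitimate since $K\geq 3$) produces closed subspaces $\tilde L_k\subset L_k'=L_k\cap L^{\perp}$, a starting point, and an index sequence hitting every $k$ infinitely often along which the iterates diverge in norm. This is exactly (iii).

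For (iii)$\Rightarrow$(i) I would apply Proposition~\ref{34} to the divergent product onto the $\tilde L_k$. Here the hypothesis $K\leq 4$ is decisive: $\min\{4,K\}=K$, so the set $I$ delivered by the proposition is all of $[K]$, yielding a normalized weakly-null sequence $\{w_n\}$ with $\lim_n\max\{\dist(w_n,\tilde L_k):\,k\in[K]\}=0$. Two observations then finish the argument. First, since every $\tilde L_k\subset L^{\perp}$, each projection $P(\tilde L_k)$ maps $H$ into $L^{\perp}$, so all iterates from $z_1$ on lie in $L^{\perp}$ and hence $\{w_n\}\subset L^{\perp}$; this is where the clause $\tilde L_k\subset L^{\perp}$ of (iii) is genuinely used, as it rules out the degenerate possibility that $\{w_n\}$ merely exploits an infinite-dimensional $L$. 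Second, $\tilde L_k\subset L_k$ gives $\dist(w_n,L_k)\leq\dist(w_n,\tilde L_k)$, so $\lim_n\max\{\dist(w_n,L_k):\,k\in[K]\}=0$ as well. Thus $\{w_n\}$ is a normalized weakly-null sequence in $L^{\perp}$ almost contained in every $L_k$, and the implication (iii)$\Rightarrow$(i) of Lemma~\ref{BB} yields that $L_1^{\perp}+\dots+L_K^{\perp}$ is not closed.

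The main obstacle, and the reason the statement is confined to $K=3,4$, is the identity $|I|=\min\{4,K\}=K$ in Proposition~\ref{34}: only then does the extracted sequence cluster simultaneously to \emph{all} $K$ spaces, letting Lemma~\ref{BB} conclude non-closedness of the full sum $\sum_k L_k^{\perp}$. For $K\geq5$ one recovers merely that some $4$-element subfamily fails closedness, which is precisely the weaker, Johnson-graph-valued phenomenon treated later. A secondary point requiring care is the bookkeeping that keeps $\{w_n\}$ inside $L^{\perp}$ rather than only inside $\tilde L^{\perp}$, so that the infinite-dimensional-intersection alternative of Lemma~\ref{seqeq} is honestly excluded.
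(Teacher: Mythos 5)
Your proof is correct and takes essentially the same route as the paper: (i)$\Leftrightarrow$(ii) is cited from the dichotomy literature, (i)$\Rightarrow$(iii) is Lemma~\ref{BB} plus Corollary~\ref{inspaces} applied to the reduced spaces $L_k\cap L^{\perp}$, and your (iii)$\Rightarrow$(i) via Proposition~\ref{34} with $|I|=\min\{4,K\}=K$ and then Lemma~\ref{BB} is exactly the content of Corollary~\ref{n34}, which the paper invokes instead. The one cosmetic difference is the $L^{\perp}$ bookkeeping: your justification that $\{w_n\}\subset L^{\perp}$ because the iterates stay in $L^{\perp}$ reaches into the proof of Proposition~\ref{34} (whose statement only places $w_n$ in $(\bigcap_k\tilde L_k)^{\perp}$), whereas the paper's cleaner fix is to note $|P(L)w_n|\leq\dist(w_n,\tilde L_k)\to 0$ since $\tilde L_k\subset L^{\perp}$, so one may replace $w_n$ by $w_n-P(L)w_n$, renormalized.
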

\begin{proof}
The equivalence of (i) and (ii) is due to \cite{BDH, DH, BaGM1, BaGM2}.
That (i) implies (iii) follows from Lemma~\ref{BB} and Corollary~\ref{inspaces} applied to $L_k\cap L^{\perp}$. 

Assume (iii) is satisfied. 
Corollary~\ref{n34} implies that $\tilde L_1^{\perp}+\dots+\tilde L_K^{\perp}$ is not closed. By Lemma~\ref{BB} (iii),
there exists a normalized weakly-null sequence $\{w_n\}$    so that 
$$
\lim_{n\to\infty}\max\{\dist(w_n,L_k):\, k\in  [K]\}\leq \lim_{n\to\infty}\max\{\dist(w_n,\tilde L_k):\, k\in  [K]\}=0.
$$
Since $\tilde L_k\subset L^{\perp}$ for all $k\in [K]$, we can assume that  $\{w_n\}$ is in $L^{\perp}$ as well.
By Lemma~\ref{BB},  $L_1^{\perp}+\dots+L_K^{\perp}$ is  not closed.
\end{proof}

Here is a positive version of the above equivalence.

\begin{theorem}\label{3-eq-positive}
Let $H$ be a Hilbert space,  $K=3$ or $K=4$,   
and let $L_1,\dots,L_K$ 
  be closed subspaces of $H$, $\bigcap_{k\in [K]} L_k=\{0\}$.
The following statements are equivalent:
\begin{enumerate}
\item[(i)] $L_1^{\perp}+\dots+L_K^{\perp}$   is   closed in $H$;
\item[(ii)]  if $T=P_{L_K} \dots P_{L_1}$, then $\|T^n\|\leq Cr^n$ for some $r\in [0,1)$;
\item[(iii)]   for all closed subspaces  $\tilde L_i\subset L_i\subset H$,  each starting point $z_0\in H$  and every sequence of indices $k_1, k_2,\dots \in  [K]$, the sequence of iterates defined by $z_n= P({\tilde L_{k_n}}) z_{n-1}$ does  converge in norm.
\end{enumerate}
\end{theorem}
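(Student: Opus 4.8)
The plan is to read this off Theorem~\ref{3-eq}, of which the present statement is the logical complement. Since we assume $\bigcap_{k\in[K]}L_k=\{0\}$, we have $L=\{0\}$, so $L^{\perp}=H$, $P(\bigcap L_k)=0$, and the requirement $\tilde L_k\subset L_k\cap L^{\perp}$ in Theorem~\ref{3-eq} collapses to simply $\tilde L_k\subset L_k$. The three conditions here are precisely the negations of the three conditions of Theorem~\ref{3-eq}: (i) negates (i) there; (ii) negates (ii) there via the dichotomy recalled below; and (iii) negates (iii) there, where I use throughout the standing convention from the Introduction that an admissible index sequence contains each $k\in[K]$ infinitely often, so that the universal ``every sequence of indices'' here is dual to the existential ``a sequence containing each $k\in[K]$ infinitely often'' there.

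First I would record (i)$\Leftrightarrow$(ii). This holds for every $K\geq 2$ and is exactly the norm dichotomy for cyclic products from \cite{BDH, DH, BaGM1, BaGM2}: closedness of $L_1^{\perp}+\dots+L_K^{\perp}$ forces exponential decay $\|T^n\|\leq Cr^n$, while its failure forces arbitrarily slow convergence, i.e.\ condition (ii) of Theorem~\ref{3-eq}. As these alternatives are mutually exclusive and exhaustive, (ii) here is equivalent to the negation of (ii) of Theorem~\ref{3-eq}, hence to (i).

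Next I would establish (i)$\Leftrightarrow$(iii) by contraposition. For (iii)$\Rightarrow$(i): if $L_1^{\perp}+\dots+L_K^{\perp}$ is not closed, then the implication (i)$\Rightarrow$(iii) of Theorem~\ref{3-eq} supplies closed subspaces $\tilde L_k\subset L_k$, a starting point, and an admissible index sequence along which the iterates diverge, which violates (iii). Conversely, for (i)$\Rightarrow$(iii): any failure of (iii) is precisely data witnessing (iii) of Theorem~\ref{3-eq}, so the implication (iii)$\Rightarrow$(i) of that theorem yields that $L_1^{\perp}+\dots+L_K^{\perp}$ is not closed, violating (i) here.

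The one place where care is needed, and the only place where the restriction $K\in\{3,4\}$ and the hypothesis $\bigcap L_k=\{0\}$ are used, is the identification of a failure of (iii) with condition (iii) of Theorem~\ref{3-eq}. This hinges on the convention that admissible sequences use each index infinitely often: under it a divergent random product must genuinely involve all $K$ spaces $\tilde L_k$, so that Corollary~\ref{n34} (which requires $K=3$ or $K=4$) applies to the full family and, through Lemma~\ref{BB} together with $\tilde L_k\subset L_k$, transfers non-closedness from the $\tilde L_k^{\perp}$ to the $L_k^{\perp}$. Since all of this is already internal to Theorem~\ref{3-eq}, the present proof amounts to bookkeeping of negations once that convention is made explicit; I expect the only genuinely delicate point to be spelling out that the ``infinitely often'' convention is what makes the universal quantifier in (iii) the exact dual of the existential quantifier in Theorem~\ref{3-eq}.
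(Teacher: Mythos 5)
Your proof is correct and takes essentially the same route as the paper, whose entire proof consists of citing \cite{BDH, DH, BaGM1, BaGM2} for (i)$\Leftrightarrow$(ii) and then noting that ``the rest is contained in Theorem~\ref{3-eq}'', i.e.\ exactly your contrapositive reading, with $\bigcap_{k\in[K]}L_k=\{0\}$ collapsing $\tilde L_k\subset L_k\cap L^{\perp}$ to $\tilde L_k\subset L_k$. Your explicit attention to the standing ``each index appears infinitely often'' convention is just the quantifier bookkeeping the paper leaves implicit (and is indeed what makes the negations of (iii) here and (iii) there match).
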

\begin{proof}
The equivalence of (i) and (ii) is due to \cite{BDH, DH, BaGM1, BaGM2}; explicit values for $C$ and $r$ in (ii) can be found in
     \cite{BaGM2} and \cite{PRZ}. The rest is contained in Theorem~\ref{3-eq}.
\end{proof}

\section{$K\geq 4$ spaces $L_1,\dots, L_K$}\label{K5}

In this section we give a version of Theorem~\ref{3-eq}  which is suitable for $K\geq 4$ spaces. We give a  condition   involving Johnson graphs which is equivalent to the existence of 
  a divergent product of projections.

The assumption of Corollary~\ref{inspaces} implies  by Lemma~\ref{seqeq} that  $\bigcap_{i\in A} L_{i}$ is infinite dimensional, or that $\sum_{i\in A} L_{i}^{\perp}$ is not closed for each $A\in \binom{[K]}{4}$.
  This in turn implies that (ii) of the following theorem is satisfied for $V=\binom{[K]}{4}$. Hence 
the implication (ii)$\Rightarrow$(i) of the following theorem generalizes Corollary~\ref{inspaces}. 

\begin{theorem}\label{hypothesis}
Let $H$ be a Hilbert space, $4\leq K$,   
and let $L_1,L_2,\dots, L_K$ be closed subspaces of $H$.
The following statements are equivalent:
\begin{enumerate}
\item[(i)] there exist a starting point $z_0\in H$, a sequence of indices $k_1, k_2,\dots \in [K]$ containing each of the numbers $1,\dots, K$ infinitely often, and    for each $k\in [K]$, there exists   a closed subspace  $\tilde L_k\subset L_k\subset H$,   so that the sequence of iterates defined by $z_n= P(\tilde L_{k_n}) z_{n-1}$ does not converge in norm;
\item[(ii)]    there exists $V\subset \binom{[K]}{4}$ so that $\bigcup V=[K]$, for $E=\{\{A,B\}\in \binom V2:\, |A\cap B|=3\}$ the graph $G=(V,E)$ is connected, and if $A\in V$, then $\bigcap_{i\in A} L_{i}$ is infinite dimensional, or $\sum_{i\in A} L_{i}^{\perp}$ is not closed.
\end{enumerate}
\end{theorem}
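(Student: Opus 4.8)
The implication (i)$\Rightarrow$(ii) is exactly Theorem~\ref{johnson}, so the task is to prove (ii)$\Rightarrow$(i). For $K=4$ the graph $G$ is forced to be the single vertex $[4]$, and then (ii)$\Rightarrow$(i) is already Corollary~\ref{inspaces} (the sequence being supplied by Lemma~\ref{seqeq}); so I would assume $K\geq 5$, in which case $\bigcup V=[K]$ forces $G$ to have at least two vertices and hence genuine edges. Granting (ii), the plan begins with combinatorics. Since $G$ is connected it carries a closed walk through all of its vertices, and iterating it yields an infinite walk $A_1,A_2,\dots$ in which every vertex, hence (as $\bigcup V=[K]$) every index of $[K]$, recurs infinitely often. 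To this walk I apply the symbol assignment set up after Lemma~\ref{walk}: it produces sequences $s^k$, $k\in[K]$, and for each edge $i$ a distinguished triple $\EE_i=\{k_{i,X},k_{i,Y},k_{i,Z}\}=A_i\cap A_{i+1}$ of indices that will occupy the three roles of the building block at that edge. Two facts will drive everything: by Lemma~\ref{walk} each $s^k$ is a succession of single-symbol blocks separated by runs of zeros of length at least two; and $A_i=\EE_{i-1}\cup\EE_i$, so the four indices of a vertex are precisely those active on one of its two incident edges, with possibly one ``free'' index (when $\EE_{i-1}=\EE_i$) that is active on neither.

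For the geometry I would, for each vertex $A\in V$, invoke (ii) and Lemma~\ref{seqeq} to obtain an orthonormal sequence asymptotically inside every $L_i$, $i\in A$; using the freedom in Lemma~\ref{seqeq}(iii) to place such sequences in $V^{\perp}$ for prescribed finite dimensional $V$, a gliding-hump selection produces a single orthonormal master sequence $\{w_n\}$ and blocks $\{E_i\}$ realizing the overlap and almost-orthogonality conditions (i)--(iv) of Proposition~\ref{squeorthoblocs}. I place the overlap vectors $e_i$ ``at the vertices'' and the blocks $E_i$ ``at the edges'': the body of $E_i$ is chosen close to $\bigcap_{k\in\EE_i}L_k$, while the shared vector $e_i\in E_{i-1}\cap E_i$ is chosen close to the whole $\bigcap_{k\in A_i}L_k$. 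This is consistent precisely because of the combinatorial fact above: $\EE_{i-1}\cup\EE_i\subseteq A_i\in V$, so the four-space sequence furnished by $A_i$ is simultaneously adequate for both building blocks meeting at $e_i$, and in particular $e_i$ is close to $L_k$ for \emph{every} $k\in A_i$.

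With this scaffolding I would carry out the iterative construction of Corollary~\ref{inspaces} edge by edge. On block $E_i$ I let the three approximation maps $S,T,U$ send $w_n$ to its approximants in $L_{k_{i,X}},L_{k_{i,Y}},L_{k_{i,Z}}$, respectively, and for each index $k$ I define $\tilde L_k$ by gathering, over the edges $i$ where $s_i^k\neq 0$, the role-piece that $k$ contributes there (each such piece lies in $L_k$, so $\tilde L_k\subset L_k$), adjoining moreover, for every visited vertex $A_i$ on which $k$ is free, the line spanned by the approximant of $e_i$ in $L_k$. At each edge $j$ the word $\psi^j$ acts through $P(\tilde L_{k_{j,X}})$, $P(\tilde L_{k_{j,Y}})$, $P(\tilde L_{k_{j,Z}})$, and estimate (\ref{oest}) of Proposition~\ref{squeorthoblocs} gives $|\psi^j(\dots)e_j-e_{j+1}|<30\eps_j$; between the blocks I insert the harmless near-identity projections $P(\tilde L_k)$ for all $k\in A_i$, each $\tilde L_k$ containing an approximant of $e_i$ (from a role-piece if $k$ is active at an incident edge, from the adjoined line if $k$ is free), so that these projections fix $e_i$ up to a small error while forcing every index of $[K]$ to occur infinitely often in the resulting index sequence. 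Summing the per-edge errors as in (\ref{11}) and using the orthonormality of $\{e_i\}$ then shows that $\lim_n R_n\cdots R_1e_1$ does not exist, establishing (i).

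The crux, exactly as inside Proposition~\ref{squeorthoblocs}, is the local verification at every index $j$ that the single-symbol-block structure of the $s^k$ forces its hypotheses (x)--(xiii): the position of each $k_{j,\cdot}$ within its symbol block (interior, left end, right end, or isolated) must match one of the four options for $\tilde Z_j$, and here the guarantee of zero-runs of length at least two is indispensable, since it rules out the forbidden $\tilde Z_{j\pm1}$, $\tilde Z_{j\pm2}$ configurations and lets the neighboring pieces be controlled by lines such as $\vee T(e_{j-1})$ and $\vee U(e_{j+3})$. A secondary, routine difficulty is maintaining the almost-orthogonality (xiii) simultaneously across all the role switches of all $K$ indices and across the adjoined free-vertex lines when passing to the master sequence.
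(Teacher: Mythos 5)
Your proposal is correct and follows essentially the same route as the paper's proof: (i)$\Rightarrow$(ii) is quoted from Theorem~\ref{johnson}, and for (ii)$\Rightarrow$(i) you pick an infinite walk in $G$ visiting every vertex infinitely often, run the $X,Y,Z$ symbol machinery from the end of Section~\ref{SJ}, build the master orthonormal sequence and overlapping blocks via repeated use of Lemma~\ref{seqeq} (note only that the vectors are close to \emph{each} $L_k$ separately, not to $\bigcap_{k}L_k$, which may be trivial), and conclude via Proposition~\ref{squeorthoblocs} and the telescoping estimate exactly as in (\ref{glue}). Your two extra touches --- reducing the single-vertex case $K=4$ directly to Corollary~\ref{inspaces}, where the paper's infinite walk does not literally exist, and adjoining the lines $\vee S^k(e_i)$ to $\tilde L_k$ for indices $k\in A_i$ inactive on both incident edges (e.g.\ at leaf visits), so that the inserted projections $P(\tilde L_k)$ nearly fix $e_i$ --- are sound refinements of details the paper's proof leaves implicit.
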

\begin{proof}
The implication (i)$\Rightarrow$(ii)  is the statement of Theorem~\ref{johnson}.

The proof of (ii)$\Rightarrow$(i) is a refined version of the proof of Corollary~\ref{inspaces}. Inductively, we will build a structure as in the assumptions of Proposition~\ref{squeorthoblocs}.

For   $\eps_i=200^{-i}$  we choose $N_i\in \NN$, $\psi^i\in \SSS_3$   according to  Proposition~\ref{squeorthoblocs}  and define
$\beta_n=(\eps_{n+1}/|\psi^{n+1}|)^2/8$.

We choose a decreasing sequence $\{\alpha_n\}$ of numbers satisfying $0<\alpha_n< 1/(K\cdot200^n)$ and $2\sum_n^{\infty} \alpha_i<\beta_n$, which are also small enough to accommodate the following property:
Assume  $\{w_n\}$ is an orthonormal sequence, $x_n\in H$ satisfy $|w_n-x_n|\leq \alpha_n$ for all $n\in \NN$ and a linear mapping $R$ is defined by $R(w_n)=x_n$. Then  
\begin{equation}\label{8g}
\|P(R(E))-P(E)\|\leq  \beta_n
\end{equation}
 for all subspaces $E\subset H_n:=\bigvee_{j=n}^{n+2N_n}w_j$ and for all $n\in \NN$.

We choose an infinite walk in $G$, namely a sequence  of vertices $A_i\in V$ so that $\{A_i,A_{i+1}\}\in E$ and $\bigcup_{i=n}^{\infty} A_i=[K]$ for all  $n\in \NN$.

Next we repeatedly use  (i)$\Rightarrow$(iii) of  Lemma~\ref{seqeq} to inductively construct an orthonormal sequence $\{w_n\}$. We block the sequence $\{w_n\}$ into blocks of length $2N_i$  so that all vectors in the $i$-th block  are very near to all  the spaces  $L_k$  with  $k\in A_i$.
 
To start the construction we choose an orthonormal  family
$\{w_n\}$ so that 
\begin{equation}\notag 
\max\{\dist(w_n,L_k):\, k\in A_1\}<\alpha_n
\end{equation}
for all  $n\in [2N_1]$.  For each $n\in [2N_1]$ and $k\in A_1$, we choose 
$w^k_n\in L_k$ so that
$
|w_n-w^k_n|\leq  \alpha_n
$.
In the second step, we enlarge the  orthonormal  family
$\{w_n\}$ so that  
\begin{equation}\notag 
\max\{\dist(w_n,L_k):\, k\in A_2\}<\alpha_n
\end{equation}
for all  $n\in \{2N_1+1,\dots, 2N_1+2N_2\}$. 
For each such $n$ and $k \in A_2$, we  choose $w_n^{k} \in L_k$ so that  
$
|w_n-w^k_n|\leq  \alpha_n
$.  

We define $E_1=\vee\{w_n\}_{n=1}^{2N_1+1}$ and $e_1=w_1$, $e_2=w_{2N_1+1}$. 

As with the second step, in the third one we enlarge the orthogonal family $\{w_n\}$
  to be of size  $2(N_1+N_2+N_3)$, and afterwards set 
$$
E_2=\vee\{w_n:\, n\in\{2N_1+1, \dots,2(N_1+N_2)+1\}\}
$$ 
and  $e_3=w_{2(N_1+N_2)+1}$.  In the next step we define
$$
E_3=\vee\{w_n:\, n\in\{2(N_1+N_2)+1, \dots,2(N_1+N_2+N_3)+1\}\},
$$ 
$e_4=w_{2(N_1+N_2+N_3)+1}$,
and so on. Moreover, during the above process, when choosing every single $w_n$ we make sure that 
\begin{equation}\label{13g}
\langle w_n,g\rangle=0 \mbox{  for all } g\in G_n=\bigvee \{w_m, w_m^k:\, m<n,\, k\in [K]\}
\end{equation}
with $|g|=1$.
 In the above set $G_n$ we include only those $w_m^k$  which were defined, that is, where $k$ is contained in the appropriate set $A_i$.  

We choose  $X_i,Y_i\subset E_i$  as in Proposition~\ref{squeorthoblocs}. 

For each 
  $k\in [K]$, we define a linear mapping 
$S^k:\spn\{w_n\}\to H$ as follows.
First we set  $S^k(w_n)=w_n$ for all $n\in \NN$. 
If    $w_n^k$ has already been  chosen for some $n$, then  we   
re-define $S^k(w_n)=w_n^k$.   
According to (\ref{8g}), $S^k$   in place of  $S$, $T$, or $U$ satisfies (viii) of Proposition~\ref{squeorthoblocs}.

Let $\{A_i\}$ be the infinite walk in $G$ chosen above. For
  $k\in [K]$, let $\{s_i^k\}_{i=0}^{\infty}$ be the   sequence  of $X,Y,Z,0$ as described  at the end of Section~\ref{SJ}.  If $k\in A_i\cap A_{i+1}$, that is, when $s_i^k\neq 0$, then $S^k(E_i)\subset L_k$.
 
Next we define a sequence of subspaces 
$L_{k,i}\subset L_k$ as described below and set
$$
\tilde L_k=\bigvee_{i\in \NN}  L_{k,i}\subset L_k.
$$

To start with, we set 
$L_{k,i}=\{0\}$ for all $i\in \NN$. Some of the subspaces we re-define as explained below.
$$
L_{k,i}=
\begin{cases}
S^k(X_i),  \mbox{ if } s_i^k=X \\
S^k(Y_i),  \mbox{ if } s_i^k=Y  \mbox{ and $i$ is odd}  \\
S^k(Y_i),  \mbox{ if } s_i^k=Z   \mbox{ and $i$ is even.}  
\end{cases}
$$
If $i$ is even and 
$s_{i-1}^k=s_i^k=s_{i+1}^k=Y$,
then  $L_{k,i}$ remains $\{0\}$. If   the neighbor  $s_{i-1}^k$ or $s_{i+1}^k$ of $s_{i}^k$ is $0$, we redefine $L_{k,i}$ as follows. If $i$ is odd and $s_i^k=Z$, then we proceed similarly:
$$  
L_{k,i}=
\begin{cases}
S^k(\vee\{e_j:\, j\in \{i,i+1\} \mbox{ and } (s^k_{j-1}=0 \mbox{ or } s^k_{j}=0)\}  ),  \mbox{ if }  s_i^k=Y,   \mbox{ and $i$ is even,}  \\
S^k(\vee\{e_j:\, j\in \{i,i+1\} \mbox{ and } (s^k_{j-1}=0 \mbox{ or } s^k_{j}=0)\}),  \mbox{ if }   s_i^k=Z,   \mbox{ and $i$ is  odd.}  
\end{cases}
$$
Let $j\in \NN$ be given.
Recall from the very end of Section~\ref{SJ}  that 
  there is exactly one $k_{j,X}\in [K]$ so  that $s_j^{k_{j,X}}=X$. Similarly, there is exactly one $k_{j,Y}\in [K]$ and exactly one $k_{j,Z}\in [K]$ so  that $s_j^{k_{j,Y}}=Y$ and $s_j^{k_{j,Z}}=Z$.
  We apply Proposition~\ref{squeorthoblocs}  with $S$, $T$, and $U$ replaced by $S^k$ with $k\in 
A_j\cap A_{j+1}=\{k_{j,X}, k_{j,Y},k_{j,Z}\}$ and $\tilde X$, $\tilde Y$, $\tilde Z$ replaced by $\tilde L_{k_{j,X}}, \tilde L_{k_{j,Y}}, \tilde L_{k_{j,Z}}$, respectively.
Then (ix) of Proposition~\ref{squeorthoblocs} is clearly satisfied for 
$\tilde X_i=L_{k_{j,X},i}$, $\tilde Y_i=L_{k_{j,Y},i}$, and 
$\tilde Z_i=L_{k_{j,Z},i}$.

From the inductive   construction of $E_i$'s and from  (\ref{8g}), (\ref{13g}), Lemmma~\ref{justforme3}  and Lemma~\ref{justforme1} it follows that (xiii) of Proposition~\ref{squeorthoblocs} is satisfied  as well.
From the definition of the spaces $L_{k,i}$  it follows 
that $\tilde X_j=L_{k_{j,X},j}$ satisfies (x) of Proposition~\ref{squeorthoblocs}.
If $j$ is odd,  then
$\tilde Y_j=L_{k_{j,Y},j}$ satisfies (xi) of Proposition~\ref{squeorthoblocs} and $\tilde Z_j=L_{k_{j,Z},j}$ satisfies (xii) of Proposition~\ref{squeorthoblocs}.
Hence by Proposition~\ref{squeorthoblocs}, we have
\begin{equation}\notag
|\psi^j(\tilde L_{k_{j,Z}}, \tilde L_{k_{j,X}},\tilde L_{k_{j,Y}})e_j-e_{j+1}|<30\eps_j.
\end{equation}
for odd $j$.
If $j$ is even, then  we switch $Y$ and $Z$.

We define $z_0=e_1$ and 
write for short $R_i=\psi^i(\tilde L_{k_{i,Z}},\tilde L_{k_{i,X}},\tilde L_{k_{i,Y}})$; again, we switch $Y$ and $Z$ if $j$ is even.
 Since $\|R_i\|\leq 1$, we have by induction
 \begin{equation}\label{glue}
 \begin{split}
 \|R_n&R_{n-1}\dots R_1 e_1 -e_{n+1}\| \\
 &\leq \|R_nR_{n-1}\dots R_2(R_1 e_1 -e_2)\|+ \|R_nR_{n-1}\dots R_2e_2-e_{n+1}\| \\
 &\leq 30\eps_1+\|R_nR_{n-1}\dots R_3(R_2 e_2 -e_3)\|+ \|R_nR_{n-1}\dots R_3e_3-e_{n+1}\| \\
& \leq 30\eps_1+30\eps_2+\dots+\|R_n e_n-e_{n+1}\| \leq 30(\eps_1+\dots+\eps_n)\\
& \leq 60(\eps_1+\dots+\eps_n)\leq 1/2
 \end{split}
 \end{equation}
 for all $n\in \NN$. Since $\{e_i\}$ is an orthonormal sequence, the norm-limit
 $\lim_{n\to \infty} R_nR_{n-1}\dots R_1 e_1$ does not exist.
 
 To make sure that all the $K$ spaces $\tilde L_1,\dots,\tilde L_K$ appear in the product infinitely often, we re-define
 $R_i=\psi_i (\tilde L_{k_{i,Z}},\tilde L_{k_{i,X}},\tilde L_{k_{i,Y}})\Pi_{k\in A_i}P(\tilde L_k)$. Then
\begin{equation}\notag
\begin{split}
|R_i e_i-e_{i+1}|\leq &|\psi^i(\tilde L_{k_{i,Z}},\tilde L_{k_{i,X}},\tilde L_{k_{i,Y}})e_i-e_{i+1}| \\&+\|\psi^i(\tilde L_{k_{i,Z}},\tilde L_{k_{i,X}},\tilde L_{k_{i,Y}})\|\cdot |\Pi_{k\in A_i}P(\tilde L_k)e_i-e_i| \leq 60\eps_i.
\end{split}
\end{equation}
Now the statement is again seen to follow from
(\ref{glue}). 
\end{proof}

In Example~\ref{not3} we exhibit  that reducing the 4-tuple condition to a 3-tuple does not work.

\section{Examples}\label{examples}

We dedicate this section to examples which show where the previously presented results cannot be strengthened or modified.

Let $L_1,\dots, L_K$  be closed subspaces of a Hilbert space $H$. Suppose there is a starting point $z_0\in H$  and a sequence of indices $k_1, k_2,\dots \in \{1,\dots ,K\}$ containing each of the numbers $1,\dots, K$ infinitely often  so that the sequence of iterates defined by $z_n= P(L_{k_n}) z_{n-1}$ does not converge in norm.

In Theorem~\ref{johnson} we have shown that there is a connected induced
subgraph $G=(V,E)$ of $J(K,4)$ so that $\bigcup V=[K]$ and if $A\in V$, then $\bigcap_{i\in A} L_i$ is infinite dimensional  or $\sum_{i\in A} L_{i}^{\perp}$ is not closed.
Here  we construct for   each subgraph $G$ as above   a corresponding  configurations of subspaces of $H$.

\begin{proposition}~\label{johnbio}
Let $H$ be an infinite dimensional Hilbert space. Let $K\geq 4$ and  $V\subset  \binom{[K]}4$ be so that $\bigcup V=[K]$, and let $\alpha:V\to \{0,1\}$.  Then there  exist closed subspaces $L_1,\dots,L_K\subset H$ with the following properties.  If $A\in V$ and $\alpha(A)=0$, then $\bigcap_{i\in A}L_{i}$ is infinite dimensional. If $A\in V$ and $\alpha(A)=1$, then $\sum_{i\in A} L_{i}^{\perp}$ is not closed.
If $B\in \binom{[K]}4\setminus V$, then $\bigcap_{i\in B}L_{i}$ is finite dimensional  and   $\sum_{i\in B} L_{i}^{\perp}$ is   closed.
\end{proposition}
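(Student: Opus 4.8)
The idea is to realize each prescribed $4$-tuple behaviour on its own orthogonal ``coordinate block'' and then take a direct sum, exploiting that an orthogonal decomposition turns both intersections and sums of orthogonal complements into block-wise operations. Since $[K]$ is finite, $V\subseteq\binom{[K]}4$ is finite, so I may split the infinite dimensional space as an orthogonal sum $H=\bigoplus_{A\in V}H_A$ with each $H_A$ infinite dimensional (partition an orthonormal basis of $H$ into $|V|$ infinite pieces). For each $A\in V$ I define local closed subspaces $N_i^A\subseteq H_A$, $i\in[K]$, and set $L_i=\bigoplus_{A\in V}N_i^A$.

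The blocks are chosen so that block $A$ ``sees'' only the indices of $A$: I put $N_i^A=\{0\}$ whenever $i\notin A$. For $i\in A$ I distinguish two cases. If $\alpha(A)=0$ I set $N_i^A=H_A$ for all $i\in A$, so that $\bigcap_{i\in A}N_i^A=H_A$. If $\alpha(A)=1$ I choose inside the infinite dimensional $H_A$ two closed subspaces with zero angle (whose spheres almost touch, exactly the situation of Lemma~\ref{BB}, implication (iv)$\Rightarrow$(i)) and assign them to two of the four indices of $A$, assigning $H_A$ to the remaining two; this makes $\sum_{i\in A}(N_i^A)^{\perp}$ \emph{not} closed in $H_A$. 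The two elementary facts I use repeatedly are: (1) $x\in\bigcap_{i\in S}L_i$ iff its $A$-component lies in $\bigcap_{i\in S}N_i^A$ for every $A$, so $\bigcap_{i\in S}L_i=\bigoplus_A\bigcap_{i\in S}N_i^A$; and (2) $L_i^{\perp}=\bigoplus_A(N_i^A)^{\perp}$, and the finite orthogonal sum $\sum_{i\in S}L_i^{\perp}=\bigoplus_A\sum_{i\in S}(N_i^A)^{\perp}$ is closed if and only if each block sum $\sum_{i\in S}(N_i^A)^{\perp}$ is closed.

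With these in hand the three claims follow. For $A\in V$ with $\alpha(A)=0$, block $A$ already gives $H_A\subseteq\bigcap_{i\in A}L_i$, so the intersection is infinite dimensional. For $A\in V$ with $\alpha(A)=1$, the block-$A$ summand $\sum_{i\in A}(N_i^A)^{\perp}$ is non-closed, whence by (2) so is $\sum_{i\in A}L_i^{\perp}$. The crucial observation for the remaining $4$-tuples is that if $B\neq A$ then $|B|=|A|=4$ forces some $x\in B\setminus A$, and then $N_x^A=\{0\}$; consequently $\bigcap_{i\in B}N_i^A\subseteq N_x^A=\{0\}$ and $\sum_{i\in B}(N_i^A)^{\perp}\supseteq(N_x^A)^{\perp}=H_A$. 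Thus every block contributes trivially to a $4$-tuple other than its own: for $B\in\binom{[K]}4\setminus V$ one gets $\bigcap_{i\in B}L_i=\{0\}$ and $\sum_{i\in B}L_i^{\perp}=\bigoplus_A H_A=H$, both as required, and the same computation shows the blocks $A'\neq A$ never interfere with the behaviour prescribed for $A$.

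The point I want to flag is conceptual rather than computational. Although Lemma~\ref{seqeq} shows that a non-closed $4$-fold sum $\sum_{i\in A}L_i^{\perp}$ forces some $3$-subset of $A$ to already have a non-closed sum of complements, this does \emph{not} propagate to a forbidden $4$-tuple $B$ extending that triple: within block $A$ the fourth index $x\in B\setminus A$ contributes $(N_x^A)^{\perp}=H_A$, which absorbs the non-closed part and restores closedness of $\sum_{i\in B}(N_i^A)^{\perp}$. So the apparent obstruction dissolves precisely because distinct $4$-tuples must differ in an index. The only genuinely infinite dimensional input is the pair of zero-angle subspaces of the $\alpha=1$ blocks, whose existence is the standard content of Lemma~\ref{BB}; the one technical item to record carefully is the block-wise closedness criterion (2), an elementary fact suited to Section~\ref{appen}. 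Everything else is a direct verification.
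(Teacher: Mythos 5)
Your construction is correct, and its skeleton coincides with the paper's own proof: there too one splits $H=\bigoplus_{A\in V}H_A$ into $|V|$ infinite dimensional pairwise orthogonal blocks and takes each $L_k$ block-diagonal with zero component in $H_A$ whenever $k\notin A$, so that for $B\in\binom{[K]}4\setminus V$ some $x\in B\setminus A$ trivializes every block. The differences lie in the local model and in the closedness verification. Inside a block the paper uses a single formula for both values of $\alpha$: it fixes five pairwise orthogonal orthonormal sequences $\{e^A_n\}$, $\{e^A_{i,n}\}$, $i\in A$, and puts the local piece of $L_k$ equal to $\bigvee_n\bigl(e^A_n+\alpha(A)e^A_{k,n}/n\bigr)$, so that for $\alpha(A)=1$ all four local spaces almost share the sequence $\{e^A_n\}$ while meeting only at $\{0\}$; you instead pad one zero-angle pair with two copies of $H_A$, which is equally legitimate since the proposition demands nothing of an $\alpha(A)=1$ tuple beyond non-closedness. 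More substantively, for the closedness of $\sum_{i\in B}L_i^{\perp}$ the paper argues by contradiction: Lemma~\ref{BB} produces an orthonormal sequence $\{w_n\}$ with $\max\{\dist(w_n,L_i):i\in B\}\to 0$, a pigeonhole over the finitely many blocks yields (after a subsequence) a single $A$ with $|P_A(w_n)|\geq 1/|V|$, and then $L_j\perp H_A$ for $j\in B\setminus A$ forces $\dist(w_n,L_j)\geq 1/|V|$, a contradiction. You replace this by the elementary block-wise criterion: $\sum_{i\in S}L_i^{\perp}=\bigoplus_A\sum_{i\in S}(N_i^A)^{\perp}$, and a finite orthogonal direct sum of subspaces is closed iff each summand is. Both facts are correct precisely because $V$ is finite (the algebraic sums recombine block by block with no $\ell^2$ bookkeeping), and your criterion then handles the $\alpha(A)=1$ non-closedness and the forbidden tuples in one mechanical stroke, even giving the stronger conclusion $\sum_{i\in B}L_i^{\perp}=H$; the paper's route avoids having to state and prove the block-sum identity by reusing the Lemma~\ref{BB} machinery it needs anyway. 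One small expository slip in your closing remark: Lemma~\ref{seqeq} applied to a triple $S\subset A$ yields the dichotomy that $\bigcap_{i\in S}L_i$ is infinite dimensional \emph{or} $\sum_{i\in S}L_i^{\perp}$ is not closed, not that the latter must occur; nothing in your argument depends on this, but the remark should be corrected.
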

\begin{proof}
Choose $|V|$ pairwise orthogonal closed infinite dimensional subspaces $H_A$ of $H$ so that 
$$
H=\bigoplus_{A\in V}H_A.
$$
In each $H_A$ choose 5 pairwise orthogonal orthonormal sequences
$\{e^A_n\}_{n\in \NN}$ and $\{e^A_{i,n}\}_{n\in \NN}$, $i\in A$.  For $k\in [K]$, we define
$$
L_k=\bigvee_{n\in \NN}\{e^A_n+\alpha(A) e^A_{k,n}/n:\, k\in A\in V\}.
$$
Note   that if $k\notin A$, then
$L_k\perp H_A$.

Assume $A\in V$. If $\alpha(A)=0$, then $\vee\{e_n^A\}_{n\in \NN}\subset 
\bigcap_{i\in A}L_{i}$.
If $\alpha(A)=1$, then $\bigcap_{i\in A}L_{i}=\{0\}$ and $\max\{\dist(e_n^A,L_i:\, i\in A\}\leq 1/n$.
Hence  the subspace $\sum_{i\in A} L_{i}^{\perp}$ is not closed by Lemma~\ref{BB}. 

Let $B\in  \binom{[K]}4\setminus V$ be given.  Assume  $0\neq w\in\bigcap_{i\in B}L_{i}$. Then there is $A\in V$ so that $P(A)w\neq 0$.
Choose $j\in B\setminus A$. 
Then $L_j \perp H_A$, hence $w\notin L_j$, which is a contradiction.
Thus $\bigcap_{i\in B}L_{i}=\{0\}$.

Assume $\sum_{i\in B} L_{i}^{\perp}$ is not closed for some $B\in  \binom{[K]}4\setminus V$. By Lemma~\ref{BB}, there is an orthornormal sequence $\{w_n\}$ so that
$$
\lim_{n\to\infty}\max\{\dist(w_n,L_i):\, i\in  B\}=0.
$$ 
  Write each $w_n=\sum_{A\in V}P_A(w_n)$, where $P_A$ is the orthoprojection onto $H_A$. For each $n\in \NN$, there is an $A\in V$ for which $|P_A(w_n)|\geq 1/|V|$. Since $V$ is finite, by replacing $\{w_n\}$ with a suitable subsequence, we can assume this $A$ is the same for all $n\in \NN$. Choose $j\in B\setminus A$. 
 Then $L_j \perp H_A$, hence $\dist(w_n,L_j)\geq |P_A(w_n)|\geq 1/|V|$ 
 for all $n\in \NN$,  which is a contradiction.
 \end{proof}

Bauschke showed in \cite{B}    that 
if $\sum_{j\in J}L_j^{\perp}$ is closed for each nonempty 
$J\subset [K]$, then $\{z_n\}$ converges in norm. Reich and Zaslavski show in \cite{RZ}, that $L_1^{\perp}+\dots+L_{K}^{\perp}$ being closed is highly {\em not} hereditary with respect to subsets $J\subset [K]$.
A simple example for $K=3$  spaces follows.

\begin{example}\label{Bk}
Let $H$ be an infinite dimensional Hilbert space. Then there exist three closed subspaces $L_1,L_2,L_3$ of $H$ so that $L_1^{\perp}+L_2^{\perp}+L_3^{\perp}$ is closed, but $L_1^{\perp}+L_2^{\perp} $ is {\em not} closed in $H$.
\end{example}
\begin{proof}
It is enough to construct the example in a separable Hilbert space,   so assume $H=\ell_2$ and write 
$$
H=\R^3\oplus\R^3\oplus\R^3\oplus\dots,
$$
where   the $j$-th copy of $\R^3$ has an orthonormal basis  $\{e_{3j}, e_{3j+1}, e_{3j+2} \}$.
Define
\begin{equation}\notag
\begin{split}
L_1&=\bigvee\{e_{3j+1}:\, j\in \NN\} \\
L_2&=\bigvee\{e_{3j+1}+e_{3j+2}/j:\, j\in \NN\} \\
L_3&=\bigvee\{e_{3j} :\, j\in \NN\}.
\end{split}
\end{equation}
Clearly, $L_1\cap L_2=\{0\}$, and $L_1^{\perp}+L_2^{\perp}+L_3^{\perp}=H$. 
  Since 
$$
\lim_{n\to \infty}\max\{\dist(e_{3n+1},L_k):\, k\in \{1,2\}\}=0,
$$
it follows from  Lemma~\ref{BB} that  the subspace $L_1^{\perp}+L_2^{\perp}$ is not closed.
\end{proof} 

On the one hand, the condition  that $L_1^{\perp}+L_2^{\perp}+L_3^{\perp}$   is closed  is,   according to Theorem~\ref{3-eq-positive}, enough to ensure norm convergence even when projecting onto   subspaces of the given spaces, but on the other hand, it is   not really necessary
to ensure 
norm convergence when projecting just on the $L_k$'s themselves, as  the next  example  shows.  

\begin{example}\label{slownono}
Let $H$ be an infinite dimensional Hilbert space. Then there exist three closed subspaces $L_1,L_2,L_3$ of $H$ so that 
\begin{enumerate}
\item[(i)] $L_1^{\perp}+L_2^{\perp}+L_3^{\perp}$ is {\em not} closed in $H$;
\item[(ii)] for every $0\neq z_0\in H$ and $k_1, k_2,\dots \in \{1,2,3\}$,   the sequence of iterates defined by $z_n= P(L_{k_n}) z_{n-1}$   converges in norm (possibly very slowly).  
\end{enumerate}
\end{example}
\begin{proof}
It is enough to construct the example in a separable Hilbert space, so assume $H=\ell_2$ and write 
$$
H=\R^2\oplus\R^2\oplus\R^2\oplus\dots,
$$
where the $j$-th copy of $\R^2$ has an orthonormal basis $\{e_{2j-1}, e_{2j}\}$.
Define
\begin{equation}\notag
\begin{split}
L_1&=\bigvee\{e_{2j-1}:\, j\in \NN\} \\
L_2&=\bigvee\{e_{2j-1}+e_{2j}/j:\, j\in \NN\} \\
L_3&=\bigvee\{e_{2j-1}+2e_{2j}/j:\, j\in \NN\}.
\end{split}
\end{equation}
It is easy to see that $L_1\cap L_2\cap L_3=\{0\}$. Since 
$$
\lim_{n\to \infty}\max\{\dist(e_{2n-1},L_k):\, k\in \{1,2,3\}\}=0,
$$
it follows from  Lemma~\ref{BB} that the subspace $L_1^{\perp}+L_2^{\perp}+L_3^{\perp}$ is not closed.

Let $z_0\in H$ and $k_n\in \{1,2,3\}$ be given.
Write $z_0=\sum_{j=1}^{\infty}z_0^j$, where $z_0^j$ is the projection of $z_0$ onto the $j$-th copy of $\R^2$.
For every $j\in \NN$, define the sequence $z^j_n= P(L_{k_n}) z^j_{n-1}\in \bigvee \{e_{2j-1}, e_{2j}\}$. This is a sequence of iterates  of projections on the three   lines
$\vee e_{2j-1}$, $\vee (e_{2j-1}+e_{2j}/j)$ and $\vee (e_{2j-1}+2e_{2j})/j$ in the $j$-th copy of $\R^2$,  which intersect at the origin. Hence 
$$
\lim_{n\to \infty} z^j_n=0.
$$
By,  {\em e.g.},  \cite{KKM}, 
$
z_n=\sum_{j=1}^{\infty}z_n^j
$. Hence for every $\eps>0$, there exists $N\in \NN$  so that for all $n$ large enough,
$$
|z_n|^2=\sum_{j=1}^{\infty} |z_n^j|^2\leq\sum_{j=1}^N |z_n^j|^2+\sum_{j=N+1}^{\infty}|z_0^j|^2<\eps.
$$
\end{proof}

Deutsch and Hundal ask in \cite{DH} if just $L_1^{\perp}+\dots+L_{K}^{\perp}$ being closed 
is enough to guarantee the norm convergence of $\{z_n\}$. In Theorem~\ref{3-eq-positive} we  show this is indeed the case for $K=3$ and $K=4$.
For $K\geq 5$ the next example shows
that this is not the case in general. In other words, 
Proposition~\ref{34} is no longer true for  $|I|\geq 5$.

\begin{example}\label{5}
Let $H$ be an infinite dimensional Hilbert space. 
For every $K\geq 5$, there exist closed subspaces
$L_1,\dots, L_K\subset H$,   so that $\bigcap_{k\in [K]} L_k=\{0\}$
with the following properties:
\begin{enumerate}
\item[(i)] there is a starting point $z_0\in H$  and a sequence of indices $k_1, k_2,\dots \in [K]$ containing each of the numbers $1,\dots, K$ infinitely often  so that the sequence of iterates defined by $z_n= P(L_{k_n}) z_{n-1}$ does not converge in norm;
\item[(ii)] the subspaces $L_4,L_5,\dots, L_K$ are pairwise orthogonal;
\end{enumerate}
in particular, $L_1^{\perp}+\dots+L_K^{\perp}=H$.  
\end{example}
\begin{proof}
For $\eps_i=4^{-i}$ consider the spaces $X,Y, Z$ as defined in Lemma~\ref{orthoblocs}, and define $z_0=e_1$, $L_1=X$, $L_2=Y$, $L_3=Z$. For $k\in \{4,\dots, K\}$, define
\begin{equation}\notag
\begin{split}
L_k&=\bigvee\{e_i:\, i=k \mod (K-3)\} \\
A_i&=\Psi_i(Z,X,Y)P(L_k) \mbox{ if }  i=k \mod (K-3).
\end{split}
\end{equation}
Then $L_4,L_5,\dots, L_K$ are pairwise orthogonal; hence
$$
L_1^{\perp}+\dots+L_K^{\perp}\supset L_4^{\perp}+L_5^{\perp}=H.
$$
Also
$P(L_k)e_i=e_i$ if $i=k \mod (K-3)$, and 
$$
|A_ie_i-e_{i+1}|=|\Psi_i(Z,X,Y)P(L_k)e_i-e_{i+1}|=|\Psi_i(Z,X,Y)e_i-e_{i+1}|\leq 4^{-i}.
$$
Since $\|A_i\|\leq 1$, we have by induction
 \begin{equation}\notag
 \begin{split}
 \|A_n&A_{n-1}\dots A_1 e_1 -e_{n+1}\| \\
 &\leq \|A_nA_{n-1}\dots A_2(A_1 e_1 -e_2)\|+ \|A_nA_{n-1}\dots A_2e_2-e_{n+1}\| \\
 &\leq \eps_1+\|A_nA_{n-1}\dots A_3(A_2 e_2 -e_3)\|+ \|A_nA_{n-1}\dots A_3e_3-e_{n+1}\| \\
& \leq \eps_1+\eps_2+\dots+\|A_n e_n-e_{n+1}\|\leq   1/2
 \end{split}
 \end{equation}
 for all $n\in \NN$. Since $\{e_i\}$ is an orthonormal sequence, the norm-limit
 $\lim_{n\to \infty} A_nA_{n-1}\dots A_1 e_1$ does not exist.
\end{proof}

To conclude, we give in the above spirit an example showing  that for $K\geq 4$,
the Johnson graph $J(K,3)$ can not replace $J(K,4)$ in Theorem~\ref{hypothesis}.

\begin{example}\label{not3}
Let $H$ be an infinite dimensional Hilbert space and let $4\leq K$. Then there exist  
$L_1,L_2,\dots, L_K$,   closed subspaces of $H$, $\bigcap_{k\in [K]} L_k=\{0\}$, so that 
\begin{enumerate}
\item[(i)]  for all   closed subspaces  $\tilde L_i\subset L_i\subset H$, any starting point $z_0\in H$  and any sequence of indices $k_1, k_2,\dots \in [K]$ containing each of the numbers $1,\dots, K$ infinitely  often, the sequence of iterates defined by $z_n= P(\tilde L_{k_n}) z_{n-1}$ does  converge in norm;
\item[(ii)]    there exists $V\subset \binom{[K]}{3}$, so that $\bigcup V=[K]$, for $E=\{\{A,B\}\in \binom V2:\,  A\neq B, \, |A\cap B|=2\}$ the graph $G=(V,E)$ is connected, and if $A\in V$, then $\sum_{i\in A} L_{i}^{\perp}$ is not closed.
\end{enumerate}
\end{example}
\begin{proof}
It is enough to construct the example in a separable Hilbert space, so assume $H=\ell_2$ and write again
$$
H=\R^2\oplus\R^2\oplus\R^2\oplus\dots,
$$
where the $j$-th copy of $\R^2$ has an orthonormal basis $\{e_{2j-1}, e_{2j}\}$.
For $i\in \{3,\dots,K\}$, define
\begin{equation}\notag
\begin{split}
L_1&=\bigvee\{e_{2j-1}:\, j\in \NN\}, \\
L_2&=\bigvee\{e_{2j-1}+e_{2j}/j:\, j\in \NN\}, \\
L_i&=\bigvee\{e_{2j-1}+2e_{2j}/j:\, j\in \NN,\  j=i \mod (K-2)\}.
\end{split}
\end{equation}
Then $L_1\cap L_2=\{0\}$.
The spaces $L_3,\dots, L_K$ are pairwise orthogonal, hence 
$\bigcap_{k\in A}L_k=\{0\}$, and $\sum_{i\in A} L_{i}^{\perp}=H$ for each $A\in \binom{[K]}{4}$.
By Proposition~\ref{34} and Lemma~\ref{BB},  
for all closed subspaces  $\tilde L_i\subset L_i\subset H$,  each starting point $z_0\in H$  and every sequence of indices $k_1, k_2,\dots \in  [K]$ containing  all  of the numbers in $[K]$ infinitely often, the sequence of iterates defined by $z_n= P({\tilde L_{k_n}}) z_{n-1}$ does  converge in norm.

Define $V=\{\{1,2,i\}:\, i\in \{3,\dots, K\}\}$. Then $\bigcup V=[K]$ and 
$|A\cap B|=2$ for all different $A,B\in V$.
If $A\in V$, then $\sum_{i\in A} L_{i}^{\perp}$ is not closed. 
\end{proof}

\section{Appendix}\label{appen}

For an easy reference, this section  contains a   few   elementary tools we use in the above text.

\begin{lemma}\label{justforme3}
Let $\{f_n\}$ be an orthonormal sequence in a Hilbert space $H$. Let $\{\alpha_n\}$ be a sequence of positive numbers so that $\sum_{n=1}^{\infty}\alpha_n<\beta<1/4$. Let $w_{n}\in H$ be so that         $|w_{n}-f_{n}|\leq \alpha_n$ for all $n\in \NN$. Let $T:\vee\{f_n\}\to \vee\{w_n\}$ be a linear operator defined by $Tf_n=w_n$. Then
\begin{enumerate}
\item[(i)] $T$ is a surjective isomorphism;
\item[(ii)] $\|T\|\leq 1+\beta$;
\item[(iii)] $\|T^{-1}\|\leq 1/(1-\beta)$;
\item[(iv)] $|Tx-x|\leq \beta |x|$ for $x\in \vee\{f_n\}$.
\end{enumerate}
\end{lemma}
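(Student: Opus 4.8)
The plan is to view $T$ as a perturbation of the identity: set $T=\Id+D$, where $D$ is the linear map determined by $Df_n=w_n-f_n$. Everything in the lemma flows from the single norm estimate $\|D\|\le\beta$, and in fact statement (iv) is just this estimate written out. So the whole argument reduces to proving that bound and then reading off (i)--(iii) by routine functional-analytic manipulations.

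First I would verify that $D$ is well defined and bounded on $\vee\{f_n\}$ with $\|D\|\le\beta$. For $x=\sum_n c_nf_n\in\vee\{f_n\}$ one has $|x|^2=\sum_n|c_n|^2$, so $|c_n|=|\langle x,f_n\rangle|\le|x|$ for each $n$; the series $\sum_n c_n(w_n-f_n)$ then converges absolutely, since $\sum_n|c_n|\,|w_n-f_n|\le|x|\sum_n\alpha_n<\infty$, which shows $Dx$ makes sense. The same chain of inequalities,
\[
|Dx|\le\sum_n|c_n|\,|w_n-f_n|\le\sum_n|c_n|\,\alpha_n\le|x|\sum_n\alpha_n\le\beta|x|,
\]
is exactly (iv) and yields $\|D\|\le\beta$.

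With this estimate in hand the remaining claims are immediate. For (ii) I would use the triangle inequality $|Tx|\le|x|+|Dx|\le(1+\beta)|x|$. The matching lower bound $|Tx|\ge|x|-|Dx|\ge(1-\beta)|x|$, valid since $\beta<1/4<1$, shows that $T$ is bounded below, hence injective with closed range; combined with $T(\spn\{f_n\})=\spn\{w_n\}$ and the continuity of $T$, taking closures gives $T(\vee\{f_n\})=\vee\{w_n\}$, which is (i). Finally (iii) follows by setting $y=Tx$ in the lower bound: $|T^{-1}y|=|x|\le|y|/(1-\beta)$, so $\|T^{-1}\|\le1/(1-\beta)$.

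I do not expect a genuine obstacle. The only point requiring a little care is the identification of the range of $T$ with $\vee\{w_n\}$, as opposed to merely a closed subspace containing $\spn\{w_n\}$; this is handled by the closedness of the range coming from the lower bound together with the continuity of $T$. The hypothesis $\beta<1/4$ is in fact stronger than needed for this lemma, where any $\beta<1$ would suffice, so it is presumably calibrated for the applications where the lemma is invoked.
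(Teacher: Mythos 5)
Your proof is correct and follows essentially the same route as the paper: the key estimate $|Tx-x|\leq\sum_n|c_n|\,|w_n-f_n|\leq\beta|x|$ is exactly the paper's computation, and (i)--(iii) then follow by the same standard bounded-below/triangle-inequality reasoning. Your framing via $T=\Id+D$ and direct absolute convergence of the series (rather than the paper's finite sums plus density) is only a cosmetic repackaging, and your closing remark that any $\beta<1$ would suffice here is accurate.
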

\begin{proof}
Let $x=\sum_{n=1}^Na_nf_n$, $|x|\leq 1$, be given. Then
$$
|Tx-x|\leq \sum_{n=1}^N|a_n|\cdot|f_n-w_n|\leq |x|\sum_{n=1}^N\alpha_n\leq\beta |x|.
$$
Hence $(1-\beta)|x|\leq |Tx|\leq (1+\beta)|x|$ and $T:\spn\{f_n\}\to  {\spn}\{w_n\}$ satisfies (ii)-(iv).
Thus $T$ is an isomorphism of $\vee\{f_n\}$ onto $\vee\{w_n\}$.
\end{proof}

Next we observe that within a normalized weakly-null sequence  there exist
almost pairwise orthogonal blocks of almost orthogonal vectors.

\begin{lemma}\label{justforme0}
Let $\{w_n\}$ be a  normalized weakly-null sequence in a Hilbert space $H$  and let $0<a_n\leq 1/2$ be so that $\lim a_n=0$. Let $V\subset H$ be a finite dimensional subspace. 
Then there exists an orthonormal sequence $\{e_i\}_{i=1}^{\infty}$ in $V^{\perp}$ and a subsequence
$\{w_{n_i}\}$ of $\{w_n\}$ so that   for all $i\in \NN$,
\begin{enumerate}
\item[(i)]  $|w_{n_i}-e_i|\leq a_i$;
\item[(ii)] $|\langle f,g\rangle| \leq a_i$ for all $f\in F_i=V\vee\{e_j, w_{n_j}:\, 1\leq j\leq i\}$ and  $g\in G_i= \vee\{w_{n_j}:\, i<j\}\cup \vee\{e_j:\, i<j\}$  with $|f|=|g|=1$.
\end{enumerate}
\end{lemma}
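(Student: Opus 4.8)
The plan is a gliding-hump construction: I build $n_1<n_2<\cdots$ and the orthonormal $\{e_i\}$ inductively, at each step peeling a new vector off the weakly-null sequence after cleaning it against all previously selected data. The only fact I need about $\{w_n\}$ is that, since it is weakly null and every auxiliary subspace $W$ I construct is finite dimensional, $P(W)w_n\to 0$ in norm as $n\to\infty$ (a finite-rank operator sends weakly-null sequences to norm-null ones). Before starting I fix, depending only on $\{a_i\}$, a rapidly decreasing sequence $\delta_j>0$ with $\sum_j\delta_j<\beta$ for some $\beta<1/4$, with $2\delta_j\le a_j$, and with $\sum_{l>i}\delta_l^2\le a_i^2/32$ for every $i$; taking e.g. $\delta_j\le 2^{-j-2}\min\{a_1,\dots,a_j\}$ works, since $\min\{a_1,\dots,a_l\}\le a_i$ whenever $l>i$.

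For the inductive step, suppose $n_1<\cdots<n_{j-1}$ and orthonormal $e_1,\dots,e_{j-1}\in V^{\perp}$ are already chosen. Put $W_{j-1}=V\vee\{e_l,w_{n_l}:\,l<j\}$ and $U_{j-1}=V\vee\{e_l:\,l<j\}$, both finite dimensional. Choose $n_j>n_{j-1}$ with $|P(W_{j-1})w_{n_j}|\le\delta_j$, set $u_j=w_{n_j}-P(U_{j-1})w_{n_j}$, and define $e_j=u_j/|u_j|$. Then $e_j\perp U_{j-1}$, so $e_j\in V^{\perp}$ and $e_j\perp e_l$ for $l<j$; thus $\{e_i\}$ stays orthonormal in $V^{\perp}$. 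Since $U_{j-1}\subset W_{j-1}$ we get $|P(U_{j-1})w_{n_j}|\le\delta_j$, hence $|w_{n_j}-u_j|\le\delta_j$, $\bigl||u_j|-1\bigr|\le\delta_j$, and so $|w_{n_j}-e_j|\le 2\delta_j\le a_j$, which is (i). From $P(W_{j-1})P(U_{j-1})=P(U_{j-1})$ one computes $P(W_{j-1})e_j=|u_j|^{-1}\bigl(P(W_{j-1})w_{n_j}-P(U_{j-1})w_{n_j}\bigr)$, giving the key estimate $|P(W_{j-1})e_j|\le 4\delta_j$.

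To verify (ii), fix $i$ and a unit vector $f\in F_i=W_i$. For every $j>i$ we have $f\in W_i\subset W_{j-1}$, so $\langle f,w_{n_j}\rangle=\langle f,P(W_{j-1})w_{n_j}\rangle$ and $\langle f,e_j\rangle=\langle f,P(W_{j-1})e_j\rangle$, whence $|\langle f,w_{n_j}\rangle|\le\delta_j$ and $|\langle f,e_j\rangle|\le 4\delta_j$. If $g\in\bigvee\{e_j:\,j>i\}$ is a unit vector, write $g=\sum_{j>i}c_je_j$ with $\sum_{j>i}|c_j|^2=1$; Cauchy--Schwarz gives $|\langle f,g\rangle|\le 4\bigl(\sum_{l>i}\delta_l^2\bigr)^{1/2}\le a_i$. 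If instead $g\in\bigvee\{w_{n_j}:\,j>i\}$ is a unit vector, then since $\sum_j 2\delta_j<\beta<1/4$, Lemma~\ref{justforme3} applies to the orthonormal sequence $\{e_j\}$ and the perturbation $\{w_{n_j}\}$: the operator $T$ with $Te_j=w_{n_j}$ has $\|T^{-1}\|\le 1/(1-\beta)$. Writing $g=Th$ with $h=\sum_{j>i}c_je_j$ gives $\sum_{j>i}|c_j|^2=|h|^2\le\|T^{-1}\|^2\le 2$, so $|\langle f,g\rangle|\le\bigl(\sum_{j>i}|c_j|^2\bigr)^{1/2}\bigl(\sum_{l>i}\delta_l^2\bigr)^{1/2}\le\sqrt2\,\bigl(\sum_{l>i}\delta_l^2\bigr)^{1/2}\le a_i$. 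As $G_i$ is precisely the union of these two spans, (ii) follows.

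The construction is routine; the one point requiring care, and the main obstacle, is that $w_{n_j}$ and $e_j$ are nearly parallel, so the two-dimensional space they span is \emph{not} almost orthogonal to the past, and any attempt to treat the interleaved list $w_{n_{i+1}},e_{i+1},w_{n_{i+2}},\dots$ as a single almost-orthonormal sequence breaks down. The resolution is exactly that $G_i$ is the \emph{union} of the spans $\bigvee\{w_{n_j}\}$ and $\bigvee\{e_j\}$, not their joint span: each of the two families is separately almost orthonormal and almost orthogonal to $F_i$, and Lemma~\ref{justforme3} is what converts the smallness of $|w_{n_j}-e_j|$ into the uniform coefficient bound $\sum|c_j|^2\le 2$ needed to control unit vectors of $\bigvee\{w_{n_j}:\,j>i\}$.
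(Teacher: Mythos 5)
Your proof is correct, and its skeleton is the same as the paper's: an inductive gliding-hump selection of $w_{n_j}$ nearly orthogonal to the finite-dimensional past (via finite-rank projections sending weakly-null sequences to norm-null ones), a Gram--Schmidt-type normalization producing $e_j$, and Lemma~\ref{justforme3} to handle unit vectors of $\bigvee\{w_{n_j}:\,j>i\}$. The one genuine difference is the subspace you orthogonalize against. The paper takes $e_j$ to be the normalized projection of $w_{n_j}$ onto the orthocomplement of the \emph{entire} past $V\vee\{e_l,w_{n_l}:\,l<j\}$ (your $W_{j-1}$), so every later $e_j$ is \emph{exactly} orthogonal to $F_i$; the case $g\in\bigvee\{e_j:\,j>i\}$ of (ii) then gives $\langle f,g\rangle=0$ outright, and the $w$-case follows from $|\langle f,g\rangle|\le|\langle f,T^{-1}g\rangle|+|g-T^{-1}g|$, where the first term vanishes and the second is controlled by Lemma~\ref{justforme3}(iv). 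You project off only $U_{j-1}=V\vee\{e_l:\,l<j\}$, so later $e_j$'s are merely $4\delta_j$-orthogonal to $F_i$, and you pay with the summable-squares bookkeeping $\sum_{l>i}\delta_l^2\le a_i^2/32$ and the coefficient bound $\|T^{-1}\|^2\le 2$ via Cauchy--Schwarz in both cases; this works, but buys nothing over the paper's cleaner variant. In particular, the ``main obstacle'' you flag at the end is an artifact of your choice: once the new vector is orthogonalized against $W_{j-1}$ rather than $U_{j-1}$, the near-parallelism of $w_{n_j}$ and $e_j$ causes no difficulty for the $e$-span case (though you are right that $G_i$ being a \emph{union} of two spans, not their joint span, is what makes Lemma~\ref{justforme3} applicable to the $w$-span case -- the paper's verification has exactly this two-case structure). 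One cosmetic slip: you stipulate $\sum_j\delta_j<\beta<1/4$ but later apply Lemma~\ref{justforme3} with perturbations $2\delta_j$, so you should have required $\sum_j 2\delta_j<\beta<1/4$ from the outset; your explicit choice of $\delta_j$ satisfies this anyway, so nothing breaks.
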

\begin{proof}
We can assume that $a_{n+1}\leq a_n/2$   and choose a decreasing sequence  $\alpha_n>0$ so that   $\sum_{i=n}^{\infty} \alpha_i<a_n$ for all $n\in \NN$.
We define $e_0:=w_{n_0}:=w_1$ and continue by induction.  
In the $(i+1)$-st step we choose an orthonormal basis 
$\{f_1,\dots, f_m\}\supset \{e_0,\dots, e_i\}$   of  $Z=V\vee\{e_j, w_{n_j}:\, 0\leq j\leq i\}$.   Next, we choose
  $M\geq m$ large enough for our considerations below and $n_{i+1}>n_i$ so that
$$
w_{n_{i+1}}\in  \bigcap_{j=1}^m\{x\in H:\, |\langle x,f_j\rangle|<\alpha_{i+1}/M\}.
$$
The projection $e$ of $w_{n_{i+1}}$ onto $Z^{\perp}$ has norm at least
$\sqrt{1-\alpha_{i+1}^2/M}$. Define $e_{i+1}=e/|e|$. Then $|e_{i+1}-w_{n_{i+1}}|<\alpha_{i+1}<a_{i+1}$  if $M>0$ is large enough. 

 To verify (ii), let $i\in \NN$ and $f\in F_i$ with $|f|=1$ be given. 
 If $g\in \vee\{e_j:\, i<j\}$, then $\langle f,g\rangle=0$.   Let $T:\vee\{e_j:\, i<j\}\to \vee\{w_{n_j}:\, i<j\}$ be a linear operator defined by 
 $Te_j=w_{n_j}$. Let 
$g\in \vee\{w_{n_j}:\, i<j\}$ with $|g|\leq 1$ be given. Then by Lemma~\ref{justforme3}, 
$$
|\langle f,g\rangle|\leq |\langle f,T^{-1}g\rangle|+|g-T^{-1}g|= |T(T^{-1}g))-T^{-1}g|
\leq \frac{a_{i+1}}{1-a_{i+1}}\leq a_i.
$$

\end{proof}

If the angle between two closed subspaces of a Hilbert space is large, then the sum of the spaces is closed. Moreover, the projection on the sum is approximately equal the sum of the projections on the two spaces.

\begin{lemma}\label{justforme1}
Let $H$ be a Hilbert space,   $F$ and $G$
be closed subspaces of $H$ and  
let $0<\alpha<1/3$. Assume $|\langle f,g\rangle|\leq \alpha$ for all  $f\in F$ and  $g\in G$   with $|f|\leq 1$ and $|g|\leq 1$.
Then 
\begin{enumerate}
\item[(i)]  $\frac 12 B_{F\vee G}\subset B_F+B_G$;
\item[(ii)] $F\vee G= F+G$, hence $F+G$ is  closed;
\item[(iii)] $\|P(F)+P(G)-P(F\vee G)\|\leq 4\sqrt{\alpha}$;
\item[(iv)] $\|P(F)P(G)\|\leq \sqrt{\alpha}$.
\end{enumerate}
\end{lemma}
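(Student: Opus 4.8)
The plan is to reduce everything to one quantitative consequence of the hypothesis: by homogeneity, $|\langle f,g\rangle|\le\alpha|f|\,|g|$ for \emph{all} $f\in F$, $g\in G$. From this I would first dispatch (iv). For any $x\in H$ put $g=P(G)x$ and $f=P(F)P(G)x=P(F)g$; since $f\in F$ one has $|f|^2=\langle f,g\rangle\le\alpha|f|\,|g|\le\alpha|f|\,|x|$, so $|f|\le\alpha|x|$ and $\|P(F)P(G)\|\le\alpha\le\sqrt\alpha$.

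The workhorse for (i) and (ii) is the coercivity estimate $|f+g|^2=|f|^2+|g|^2+2\langle f,g\rangle\ge(1-\alpha)(|f|^2+|g|^2)$, valid for $f\in F$, $g\in G$. For (ii): if $f_n+g_n$ is Cauchy, applying this to the differences shows that $\{f_n\}$ and $\{g_n\}$ are separately Cauchy; their limits lie in the closed spaces $F$ and $G$, so $F+G$ is closed, and being a closed subspace containing $F\cup G$ it coincides with $F\vee G$. For (i): given $x\in F\vee G=F+G$ with $|x|\le\frac12$, every decomposition $x=f+g$ obeys $|f|^2+|g|^2\le|x|^2/(1-\alpha)\le\frac{1}{4(1-\alpha)}$, which is $<1$ because $\alpha<\frac13$; hence $|f|,|g|<1$ and $x\in B_F+B_G$.

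The main effort is (iii), and this is the step I expect to be the only real obstacle, though it is bookkeeping rather than a conceptual difficulty. Set $A=P(F)+P(G)-P(F\vee G)$ and $M=F\vee G$. Since $A$ is self-adjoint, annihilates $M^\perp=F^\perp\cap G^\perp$, and maps $M$ into $M$, its norm equals $\sup\{|\langle Ax,x\rangle|:x\in M,\ |x|=1\}$. For $x\in M$, writing $u=P(F^\perp)x$ and $v=P(G^\perp)x$ gives $\langle Ax,x\rangle=|x|^2-|u|^2-|v|^2$. I would then sandwich $|u|^2+|v|^2$: fixing a decomposition $x=f_0+g_0$ (available by (ii)) one checks $u=P(F^\perp)g_0$ and $v=P(G^\perp)f_0$, so (iv) gives $(1-\alpha^2)|g_0|^2\le|u|^2\le|g_0|^2$ and symmetrically for $v$, while expanding $|x|^2=|f_0|^2+|g_0|^2+2\langle f_0,g_0\rangle$ confines $|f_0|^2+|g_0|^2$ to the interval $[\,|x|^2/(1+\alpha),\ |x|^2/(1-\alpha)\,]$.

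Combining these bounds yields $(1-\alpha)|x|^2\le|u|^2+|v|^2\le|x|^2/(1-\alpha)$, whence $|\langle Ax,x\rangle|\le\frac{\alpha}{1-\alpha}|x|^2$. For $\alpha<\frac13$ this is at most $\frac{3\alpha}{2}|x|^2\le4\sqrt\alpha\,|x|^2$, which proves (iii) (in fact with the sharper constant $\frac{\alpha}{1-\alpha}$). The only points needing care are the two projection identities $u=P(F^\perp)g_0$, $v=P(G^\perp)f_0$ and the reduction of $\|A\|$ to its numerical radius on $M$; once the hypothesis has been recast as $|\langle f,g\rangle|\le\alpha|f|\,|g|$ and (iv) is available, no genuine difficulty remains.
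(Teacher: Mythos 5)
Your proof is correct, but it takes a genuinely different route from the paper's in all four parts, and in two of them it yields sharper constants. The paper proves (i) first, using that $B_F+B_G$ is norm closed (a sum of two weakly compact sets) together with density of $F+G$ in $F\vee G$, and then reads (ii) off from (i); you instead prove (ii) directly from the coercivity estimate $|f+g|^2\geq(1-\alpha)\bigl(|f|^2+|g|^2\bigr)$ via a Cauchy-sequence argument and obtain (i) as a corollary, which is slightly more elementary (no weak compactness needed) and handles the closed ball cleanly without an approximation step. For (iii) the paper shows, for $|h|\leq 1$, that $h-P(F)h-P(G)h$ is almost orthogonal to $F\vee G$ (using the bound $|f|,|g|\leq 2$ from (i)) and concludes $\|P(F)+P(G)-P(F\vee G)\|\leq\sqrt{12\alpha}\leq 4\sqrt\alpha$; you instead exploit that $A=P(F)+P(G)-P(F\vee G)$ is self-adjoint, vanishes on $(F\vee G)^{\perp}=F^{\perp}\cap G^{\perp}$, and leaves $M=F\vee G$ invariant, reduce $\|A\|$ to the numerical radius on $M$, and sandwich $|P(F^\perp)x|^2+|P(G^\perp)x|^2$ via the decomposition $x=f_0+g_0$; your identities $u=P(F^\perp)g_0$, $v=P(G^\perp)f_0$ and the two-sided bounds all check out, giving $\|A\|\leq\frac{\alpha}{1-\alpha}$, an $O(\alpha)$ bound strictly better than the paper's $O(\sqrt\alpha)$. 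Likewise your proof of (iv), taking $f=P(F)g$ with $g=P(G)x$ so that $|f|^2=|\langle f,g\rangle|\leq\alpha|f|\,|g|$, gives $\|P(F)P(G)\|\leq\alpha$, improving the paper's $\sqrt\alpha$ (the paper expands $\|P(F)P(G)h\|^2$ and inserts a cross term). The one point deserving explicit mention, which you correctly flag, is the block-diagonal reduction: since $A=AP(M)$ and $A$ is self-adjoint, $A=P(M)AP(M)$, so the restriction to $M$ indeed carries the full norm; with that, the argument is complete.
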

\begin{proof}
From the definition of the closed linear span of two linear spaces it follows that $F\vee G=\overline{F+G}$.
Notice also that $B_F+B_G$ is closed as a sum of two weakly compact sets. 
Let $x\in F\vee G$ be so that $|x|<1/2$.  By the above discussion, arbitrarily close to $x$  there exists  a point $y=f+g$, where    $f\in F$ and $g\in G$. Assume $|f|\leq |g|$. Then
$$
\frac 14>|y|^2\geq|f|^2+|g|^2-2\alpha |f|\cdot|g|\geq |g|^2(1-2\alpha);
$$
hence $|f|\leq |g|\leq 1$.

Statement (ii) follows immediately from (i). 

To show (iii), let $h\in H$ with $|h|\leq 1$ be given. First we show that $h-(P(F)h+P(G)h)$ is almost 
orthogonal to $F\vee G$, meaning that $P(F)h+P(G)h$ is a good candidate for an approximation of $P(F\vee G)h$.

Let $x\in F\vee G$, $|x|\leq 1$, be given.  According to (i), $x=f+g$ for some   $f\in F$ and $g\in G$  with
$|f|\leq 2$ and $|g|\leq 2$. Then
\begin{equation}\notag
\begin{split}
|\langle h-P(F)h-P(G)h,x\rangle|=&|\langle h-P(F)h-P(G)h,f+g\rangle| \\ 
\leq &\langle h-P(F)h,f\rangle+|\langle P(G)h,f\rangle| \\
&+\langle h-P(G)h,g\rangle+|\langle P(F)h,g\rangle|
  \\
&\leq 4\alpha.
\end{split}
\end{equation}
Hence
\begin{equation}\notag
\begin{split}
|P(F\vee G)h&-P(F)h-P(G)h|^2 
\\ &\leq|\langle P(F\vee G)h-h,P(F\vee G)h-P(F)h-P(G)h\rangle| \\
&\ \  +|\langle h-P(F)h-P(G)h,P(F\vee G)h-P(F)h-P(G)h\rangle  \\ &
\leq 12\alpha.
\end{split}
\end{equation}
To show (iv), let $h\in H$ with $|h|=1$ be given. 
Then
\begin{equation}\notag
\begin{split}
\|P(F)P(G)h\|^2=&\langle P(F)P(G)h,P(F)P(G)h\rangle \\
=&\langle P(F)P(G)h-P(G)h,P(F)P(G)h\rangle  \\
&+\langle P(G)h,P(F)P(G)h\rangle\leq \alpha.
\end{split}
\end{equation}
\end{proof}


\end{document}